\documentclass[11pt]{article}
\usepackage[T1]{fontenc}
\usepackage{arxiv}
\usepackage{tikz}

\usepackage{graphicx}
\usepackage[numbers,square]{natbib}
\usepackage{hyperref}
\usepackage{bookmark}
\usepackage{url}
\usepackage{booktabs}
\usepackage{amsfonts}
\usepackage{nicefrac}
\usepackage{amsmath,amssymb,amsthm}
\usepackage{algorithm,algpseudocode}
\usepackage{wrapfig}
\usepackage{enumerate}
\usepackage{bm}
\usepackage{xcolor}

\usepackage{pgfplots}
\pgfplotsset{compat=newest}
\usepackage[final]{showlabels}

\theoremstyle{plain}
\newtheorem{theorem}{Theorem}
\newtheorem{lemma}[theorem]{Lemma}
\newtheorem{proposition}[theorem]{Proposition}

\theoremstyle{definition}
\newtheorem{definition}[theorem]{Definition}

\theoremstyle{remark}

\numberwithin{equation}{section}
\numberwithin{figure}{section}
\numberwithin{table}{section}

\newcommand{\argmax}{\operatornamewithlimits{argmax}}

\newcommand{\EVal}{\mathbb{E}}
\newcommand{\E}[2]{\EVal_{#1}\left[  {#2}\right]}
\newcommand{\VVal}{\mathbb{V}}
\newcommand{\V}[2]{\VVal_{#1}\left[#2\right]}

\newcommand{\forward}{\mathbb{P}_{\mathrm{F}}}

\newcommand{\R}{\mathbb{R}}

\newcommand{\transpose}{\top}
\newcommand{\dd}{\mathrm{d}}
\newcommand{\KL}{\mathcal{D}_{\mathrm{KL}}}

\newcommand{\SampleSp}{\mathcal{X}}
\newcommand{\SearchSp}{\Theta}
\newcommand{\ContextSp}{\mathcal{C}}
\newcommand{\sampleDist}[1]{p_{#1}}
\newcommand{\searchDist}[1]{q_{#1}}
\newcommand{\baseObjFunc}{F}
\newcommand{\smoothedObjFunc}{\mathcal{F}}

\newcommand{\normal}[3]{N\left({#1} \mid {#2}, {#3}\right)}
\newcommand{\identityMatrix}{\mathbf{I}}
\newcommand{\transitMatrix}{\mathbf{T}}
\newcommand{\genotypeEstimator}{\overline{\frac{\partial}{\partial \psi}}^{\mathrm{g}}}

\newcommand{\popSize}{N}
\newcommand{\suptime}[2]{{#1}^{(#2)}}
\newcommand{\learningRate}{\eta}

\newcommand{\fullGradientEstimator}{\overline{\frac{\partial}{\partial \psi}}^{\mathrm{p}}}
\newcommand{\fullGradientGeneralArg}[1]{\overline{\frac{\partial}{\partial {#1}}}^{\mathrm{p}}}
\newcommand{\FisherInfo}[1]{\mathcal{I}\left[{#1}\right]}
\newcommand{\estimatorChar}{U}
\newcommand{\estimator}[1]{\estimatorChar\left({#1}\right)}
\newcommand{\RaoBlackwellize}[2]{\mathrm{RB}\left[{#1}\right]\left({#2}\right)}
\newcommand{\RaoBlackwellizeNoArgs}[1]{\mathrm{RB}\left[{#1}\right]}

\newcommand{\sufficientStatistic}[1]{u(#1)}
\newcommand{\baseMeasureExpFamilyChar}{h}

\newcommand{\actionSp}{\mathcal{A}}
\newcommand{\stateSp}{\mathcal{S}}

\newcommand{\diGamma}{\kappa}

\newcommand{\noise}{\xi}
\newcommand{\noiseDist}{q}
\newcommand{\thetaFunc}[2]{\beta_{#1}(#2)}
\newcommand{\reparameterizedEstimator}{\overline{\frac{\partial}{\partial \psi}}^{\mathrm{r}}}

\hypersetup{hypertexnames=false}

\bibpunct{[}{]}{;}{n}{,}{,}

\title{Accelerating Evolutionary Strategy via Rao-Blackwellizing Realization of Uncertain Input}

\renewcommand{\shorttitle}{Rao-Blackwellized ES under Input Uncertainty}
\renewcommand{\headeright}{A Preprint}
\renewcommand{\undertitle}{A Preprint}

\author{
  So Nakashima\thanks{\texttt{naaso0510@gmail.com}} \\
  Institute of Industrial Science \\
  The University of Tokyo \\
  \And
  Tetsuya J. Kobayashi\thanks{\texttt{tetsuya@mail.crmind.net}} \\
  Institute of Industrial Science, The University of Tokyo \\
  Universal Biology Institute, The University of Tokyo \\
}

\begin{document}

\maketitle

\begin{abstract}
We investigate Optimization under Input Uncertainty (OIU), in which the input to the objective function, rather than the objective function itself, is subject to uncertainty.
OIU appears in manufacturing processes with production tolerance, control of physical systems with actuation noise, Mixture of Experts, and Reinforcement Learning (RL).
Most of the existing approaches solve OIU by using the value of the objective function but discard the information of the realized input, even though the realized input is observable in various applications.
The question here is whether the discarded information of the realized input is useful to accelerate the optimization process.
We affirmatively answer this question for Evolutionary Strategy (ES) by theoretically showing that the information of the realized input can reduce the variance of the gradient estimator via Rao-Blackwellization.
Using the Rao-Blackwellized gradient estimator, we propose Phenotype-Accelerated Evolutionary Strategy (PAES), which is a refinement of ES for OIU.
Numerical experiments show that PAES converges faster than the usual ES from simple continuous optimization problems to RL benchmarks.
\end{abstract}

\keywords{evolutionary strategy \and optimization under input uncertainty \and Rao-Blackwellization \and variance reduction \and reinforcement learning}

\section{Introduction}
In this paper, we investigate a class of stochastic optimization problems in which the input to the objective function, rather than to the objective function itself, is subject to uncertainty.
Concretely, we specify an input distribution $\sampleDist{\theta}(x)$ of the input $x$ parameterized by $\theta$ and then observe the realization $x$ of the input and its evaluation $f(x)$.
Our aim is to find the optimal parameter $\theta$ of the input distribution that maximizes 
\begin{align}
    \baseObjFunc(\theta) := \E{\sampleDist{\theta}(x)}{f(x)},
\end{align}
the expected value of the evaluation.
We call this problem \emph{Optimization under Input Uncertainty} (OIU) (Figure~\ref{fig:oiu-schematic}).
\begin{figure}[t]
    \centering
    \includegraphics[page=1, trim=30 430 30 30, clip, width=0.8\textwidth]{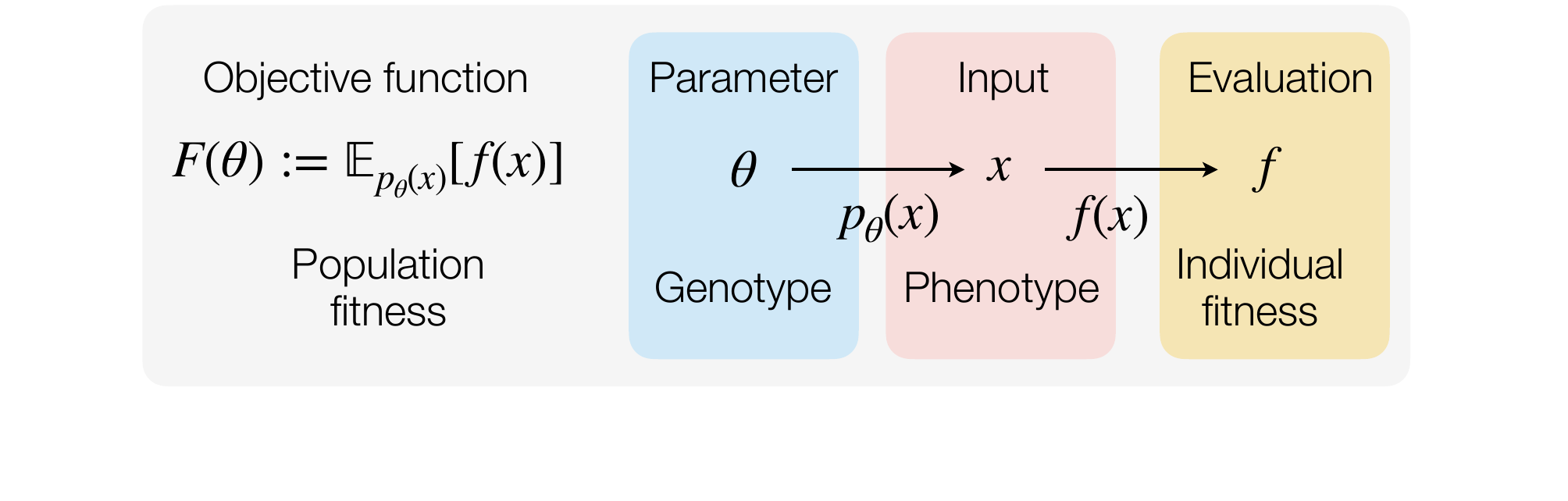}
    \caption{Schematic illustration of Optimization under Input Uncertainty (OIU) and its biological analogy. The parameter $\theta$ (genotype) stochastically determines the input $x$ (phenotype) via $\sampleDist{\theta}(x)$, which is then evaluated by $f$ (individual fitness). The objective $\baseObjFunc(\theta) = \E{\sampleDist{\theta}(x)}{f(x)}$ corresponds to the population fitness with genotype $\theta$.}
    \label{fig:oiu-schematic}
\end{figure}

The uncertainty of input in optimization problems can be found in various contexts~\cite{beyer2007robust,yaochu2005evolutionary}, such as the manufacturing process with production tolerance~\cite{beyer2007robust, Taguchi1989quality,chen1996procedure}, the control of robots and spacecraft with actuation noise~\cite{frohlich2020noisyinput,ollivier2017information}, and performative prediction in machine learning, where the distribution of data depends on the model parameters~\cite{perdomo2020performative}.
While these examples regard randomness in $x$ as an externally imposed nuisance, the same setup also appears when an algorithm designer intentionally introduces randomness.
In addition, we sometimes employ a distribution $\sampleDist{\theta}(x)$ when $x$ is discrete to make $\E{\sampleDist{\theta}(x)}{f(x)}$ differentiable with respect to $\theta$.
Mixture of Experts (MoE)~\cite{jordan1993hierarchical} and classification problems are representative examples.
Another important class of problems is Reinforcement Learning (RL)~\cite{sutton1998reinforcement}, where we introduce stochasticity to a policy to encourage exploration for finding better actions.
We also note that a similar structure appears in prompt engineering, since the outputs of Large Language Models (LLMs) are stochastic.

Since a variety of important real-world problems are categorized into this class of problems, several algorithms have been proposed to deal with the uncertainty of the input.
An approach is gradient-based algorithms, which minimize the worst-case objective value by assuming boundedness of uncertainty~\cite{bertsimas2010robust, Hughes2019largest, hughes2020particle}.
The second class is Bayesian optimization~\cite{frohlich2020noisyinput, oliveira2019bayesian}, which constructs a prior distribution of the objective function by incorporating the uncertainty of the input and then updates the distribution by observing the realized input and its evaluation.
Yet another approach, which we focus mainly on in this paper, is population-based optimization~\cite{beyer2004actuator}, such as Evolutionary Strategy (ES)~\cite{rechenberg1973evolutionary} and genetic algorithms~\cite{tsutsui1999comparative}.
ES has demonstrated its efficiency and superiority in RL~\cite{salimans2017evolution,mania2018simple} in terms of scalability and robustness to local optima compared to single-agent RL algorithms~\cite{lehman2018es,conti2018improving} (See Appendix~\ref{sec:related-work} for more detail).

While these algorithms search for an optimal $\theta$ using $f(x)$ of the realized input $x$ as a feedback signal, they do not utilize the information of the realized input $x$ itself.
Since the realized input $x$ is observable in many applications, the information it carries could be used to further improve the efficiency of the algorithms. 
In the manufacturing process, for example, the production tolerance prevents us from completely controlling the resulting product $x$, but the manufactured product $x$ can be measured and therefore observable.
Likewise, in the control of physical systems, actuation noise hinders us from completely controlling the trajectory of the system, but we can measure the trajectory afterwards.
In particular, ES applied to RL~\cite{salimans2017evolution} uses only the reward of the episode without tapping into the trajectory of states and actions.

These examples raise a question: \emph{is the observation of the input useful for accelerating the optimization?}
Motivated by the analogy with evolutionary biology, we introduce the \textbf{\emph{Phenotype-Accelerated Evolutionary Strategy (PAES)}} which achieves the acceleration of ES. 
In the context of evolutionary biology, the function $f(x)$ is fitness, which is determined by the phenotypic state $x$ of an individual (Figure~\ref{fig:oiu-schematic}).
The phenotype $x$ is stochastically determined by the genotype $\theta$ as $p_{\theta}(x)$. Thus, the objective function $\baseObjFunc(\theta)$ is the fitness of a population with genotype $\theta$. Following the idea of accelerating evolution by learning at the phenotypic level~\cite{nakashima2022acceleration,nakashima2025unifying}, PAES takes advantage of the phenotypic information to design a population-based efficient gradient estimator.
Furthermore, we affirmatively and theoretically answer the question by showing that this acceleration is Rao-Blackwellization~\cite{rao1945information, blackwell1947conditional,lehmann1998theory}.
Rao-Blackwellization is a technique to reduce the variance of estimation in which we eliminate the randomness from irrelevant random variables by taking a conditional expectation (See Appendix~\ref{sec:related-work}).
PAES replaces the gradient estimation of ES for OIU with its Rao-Blackwellization.

The contribution of this paper is summarized as follows:
\begin{itemize}
\item We introduce a general framework for optimization under input uncertainty (OIU).
\item We show theoretically that observing the realized input can accelerate ES for OIU through Rao--Blackwellization, and we call the resulting accelerated method PAES.
\item For a class of problems, we derive the Rao--Blackwellized gradient estimator analytically and provide a reparameterization-trick approximation to it~\cite{kingma2014auto,rezende2014stochastic}.
\item We demonstrate numerically that PAES converges faster than ES across benchmarks, from simple optimization problems to more complex RL tasks.
\end{itemize}

We note that a Rao-Blackwellization of a gradient estimator is employed in ~\cite{lam2026raoblackwellised}. Yet, this work differs from ours in that they take a conditional expectation of the latent variable with respect to the activation of hidden layer whereas we take a conditional expectation of parameters with respect to the realized input.

\subsection{Organization of the Paper}
The rest of the paper is organized as follows.
In Section~\ref{sec:preliminaries}, we explain notation and preliminaries.
In Section~\ref{sec:problem-setting}, we formulate OIU and outline some examples of OIU.
In Section~\ref{sec:algorithm}, we introduce PAES and show that the variance of the gradient estimator of PAES is smaller than that of ES via Rao-Blackwellization.
We also give an analytical form of the gradient estimator of PAES for some specific cases and device an approximation of the gradient estimator via the reparameterization trick.
In Section~\ref{sec:example-of-paes} and Section~\ref{sec:numerical-experiments}, we demonstrate PAES with some examples and conduct the numerical experiments of PAES, respectively.
In Section~\ref{sec:conclusion}, we conclude the paper.
Appendix contains the omitted proofs from the main text and additional experimental results.

\section{Preliminaries}
\label{sec:preliminaries}
Before introducing PAES, we clarify the notation to be used and outline the basis of ES and its variants.

\subsection{Notation}
For a positive integer $n$, let $[n] = \{1, 2, \ldots, n\}$.
For simplicity, we sometimes denote summation $\sum_{i=1}^N f(x_i)$ as $\int f(x) \dd x$.
We denote the Kronecker delta function as $\delta_{x, x'}$.
For a vector $\bm{x} = (x_1, x_2, \ldots, x_n)$, its diagonalization $\mathrm{diag}(\bm{x})$ is defined as the diagonal matrix with $x_1, x_2, \ldots, x_n$ on its diagonal.

\subsection{Random Variables and Probability Distributions}
When a random variable $x$ follows a certain distribution whose probability density function is $p(x)$, we write $x \sim p(x)$. $\E{p(x)}{f(x)}$ denotes the expectation of $f(x)$ with respect to $p(x)$. 
For a random vector $v \sim p(v)$, we define the variance of $v$ as $\V{p(v)}{v} = \E{p(v)}{\|v\|^2}$.

For a $n$-dimensional normal distribution, we denote its probability density function by $\normal{x}{\mu}{\Sigma}$, where $x \in \R^n$ is a random variable, $\mu \in \R^n$ is the mean vector, and $\Sigma \in \R^{n\times n}$ is the covariance matrix.
Similarly, for a Dirichlet distribution, we denote its probability density function by $\mathrm{Dir}(\pi \mid \alpha)$:
\begin{align}
    \mathrm{Dir}(\pi \mid \alpha)
    &:= \frac{1}{\mathcal{B}(\alpha)} \prod_{i \in [n]} \pi_{i}^{\alpha_{i} - 1},
\end{align}
where $\pi$ is a probability distribution on $[n]$ and  and $\alpha\in \R^n_{>0}$ is the parameter.
Here, $\mathcal{B}(\alpha)$ is the extended Beta function defined by
\begin{align}
    \mathcal{B}(\alpha) := \frac{\prod_{i \in [n]} \Gamma(\alpha_{i})}{\Gamma\left(\sum_{i \in [n]} \alpha_{i}\right)},
\end{align}
where $\Gamma(x)$ is the Gamma function.

\subsection{Evolutionary Strategy}
Given an unconstrained maximization problem of an objective function $F \colon \SearchSp \to \R$, where $\SearchSp \subseteq \R^n$ is a search space and its element is typically represented by $\theta$, ES solves the problem by introducing a population, which is written as a distribution $\searchDist{\psi}$ on $\SearchSp$ parameterized by $\psi$.
Instead of directly optimizing $F(\theta)$, ES seeks a parameter $\psi$ that maximizes the expected value of $F(\theta)$ with respect to $\searchDist{\psi}(\theta)$~\cite{rechenberg1973evolutionary}:
\begin{align}
    \label{eq:def-smoothe-obj}
    \smoothedObjFunc(\psi) := \E{\searchDist{\psi}(\theta)}{\baseObjFunc(\theta)}.
\end{align}
This expected value is maximized by a stochastic gradient ascent method:
\begin{align}
    \label{eq:es-update-rule}
    \psi_{t+1} = \psi_t + \eta \overline{\frac{\partial \smoothedObjFunc}{\partial \psi}},
\end{align}
where $\overline{\frac{\partial \smoothedObjFunc}{\partial \psi}}$ is an estimator of $\frac{\partial \smoothedObjFunc}{\partial \psi}$ and $\eta$ is a learning rate.
By applying the log-derivative trick, the gradient is calculated without computing $\partial F(\theta)/\partial \theta$ as follows
\begin{align}
    \label{eq:gradient-log-derivative-without-transformation}
    \frac{\partial \smoothedObjFunc}{\partial \psi} = \E{\searchDist{\psi}(\theta)}{F(\theta)  \frac{\partial \log \searchDist{\psi}(\theta)}{\partial \psi}}.
\end{align}
From this equation, we can estimate the gradient using samples $\theta_i \sim \searchDist{\psi}$ for $i \in [\popSize]$:
\begin{align}
    \frac{\partial \smoothedObjFunc}{\partial \psi} \approx \frac{1}{\popSize} \sum_{i=1}^{\popSize} {F(\theta_i)}  \frac{\partial \log \searchDist{\psi}(\theta_i)}{\partial \psi} =: \overline{\frac{\partial \smoothedObjFunc}{\partial \psi}}.
\end{align}

\subsection{Modifications and Refinements of ES}
The gradient estimator of ES~\eqref{eq:gradient-log-derivative-without-transformation} is sensitive to the scale of the objective function $F(\theta)$, which may worsen the performance of ES in some cases.
To address this issue, we can introduce a transformation function $W_{\{F(\theta_i)\}_i}: \R \to \R$, which can depend on a set of evaluations $\{F(\theta_i)\}_i$ over the populations.
We define a transformed objective function by
\begin{align}
    \smoothedObjFunc_{W}(\psi) := \E{\searchDist{\psi}(\theta)}{W(F(\theta))},
\end{align}
and the associated gradient estimator from samples~\cite{ollivier2017information};
\begin{align}
    \label{eq:gradient-approx}
    \frac{\partial \smoothedObjFunc_{W}}{\partial \psi} \approx \frac{1}{\popSize} \sum_{i=1}^{\popSize} {W(F(\theta_i))}  \frac{\partial \log \searchDist{\psi}(\theta_i)}{\partial \psi},
\end{align}
where we omit the subscript of $W_{\{F(\theta_i)\}_i}$ for notational simplicity.
We can further improve the performance of ES by introducing a natural gradient estimator.
Using the Fisher information matrix
\begin{align}
    \label{eq:fisher-information-matrix-search-dist}
    \FisherInfo{\searchDist{\psi}(\theta)} := \E{\searchDist{\psi}(\theta)}{ \frac{\partial \log \searchDist{\psi}(\theta)}{\partial \psi} \frac{\partial \log \searchDist{\psi}(\theta)}{\partial \psi}^{\top}},
\end{align}
the natural gradient is obtained; 
\begin{align}
    \nabla \smoothedObjFunc_{W}(\psi) = \FisherInfo{\searchDist{\psi}(\theta)}^{-1} \E{\searchDist{\psi}(\theta)}{ W(F(\theta))   \frac{\partial \log \searchDist{\psi}(\theta)}{\partial \psi}},
\end{align}
where and hereafter $\nabla$ instead of $\partial/\partial \phi$ is used to indicate the natural gradient.
We can approximate the natural gradient by samples in a similar way to \eqref{eq:gradient-approx}.
This method is known as Information Geometric Optimization~\cite{ollivier2017information}.
The algorithm with transformation function $W$ is summarized in Algorithm~\ref{alg:ES-W}.

\begin{algorithm}
    \caption{Evolutionary Strategy (ES) with Transformation Function $W$ }
    \label{alg:ES-W}
    \begin{algorithmic}
        \State \textbf{Require:} Initial parameter $\psi_0$, population size $\popSize$, learning rate $\eta$, stopping criterion
        \State \textbf{Input:} Objective function $F$, search distribution $\searchDist{\psi}$, transformation function $W$
        \State \textbf{Output:} Optimized parameter $\psi_t$
        \State $t \gets 0$.
        \While{Stopping criterion is not met}
            \For{$i \in [\popSize]$}
                \State Sample $\theta_i \sim \searchDist{\psi_t}$.
                \State Evaluate $F(\theta_i)$.
            \EndFor
            \State Calculate gradient estimator $\overline{\frac{\partial \smoothedObjFunc_{W}}{\partial \psi}} = \frac{1}{\popSize} \sum_{i=1}^{\popSize} {W(F(\theta_i))}  \frac{\partial \log \searchDist{\psi_t}(\theta_i)}{\partial \psi}$.
            \State Update parameter by $\psi_{t+1} \gets \psi_t + \eta \overline{\frac{\partial \smoothedObjFunc_{W}}{\partial \psi}}$ or $\psi_{t+1} \gets \psi_t + \eta \FisherInfo{\searchDist{\psi_t}(\theta)}^{-1} \overline{\frac{\partial \smoothedObjFunc_{W}}{\partial \psi}}$.
            \State $t \gets t+1$
        \EndWhile
    \end{algorithmic}
\end{algorithm} 

There exist other finer modifications of the gradient estimator~\eqref{eq:gradient-approx}.
The two-point gradient estimator~\cite{shamir2017optimal,flaxman2005online,nesterov2017random,duchi2015optimal} is a major example, and its theoretical analysis shows the superiority of the two-point estimator over the one-point gradient estimator~\eqref{eq:gradient-approx}.
The two-point gradient estimator for general ES is defined as follows:
\begin{align}
    \label{eq:gradient-approx-two-point}
    \overline{\frac{\partial \smoothedObjFunc_{W}}{\partial \psi}} := 
    \frac{1}{\popSize} \sum_{i=1}^{\popSize} \left[ \frac{W(F(\theta^i_1))-W(F(\theta^i_2))}{2\sigma} \right]\left[ \frac{\partial \log \searchDist{\psi}(\theta^i_1)}{\partial \psi} - \frac{\partial \log \searchDist{\psi}(\theta^i_2)}{\partial \psi} \right].
\end{align}

\subsection{Zeroth-Order Optimization: an instance of ES}
One of the most popular versions of ES is the case in which the population $\searchDist{\psi}(\theta)$ is a Gaussian distribution $\normal{\theta}{\bar \theta}{\sigma^2 \identityMatrix}$ with $\sigma$  as a hyperparameter.
In this case, the gradient of $\smoothedObjFunc_{W}$  in~\eqref{eq:gradient-approx} is reduced to the following form:
\begin{align}
    \label{eq:gradient-approx-zeroth-order}
    \frac{\partial \smoothedObjFunc_{W}}{\partial \psi} \approx \frac{1}{\popSize \sigma} \sum_{i=1}^{\popSize} {W(F(\theta_i))}  \epsilon_i ,
\end{align}
where $\epsilon_i = (\theta_i - \bar \theta) / \sigma$.
This method is called Zeroth-Order Optimization (ZOO).
The two-point gradient estimator for ZOO becomes 
\begin{align}
    \overline{\frac{\partial \smoothedObjFunc_{W}}{\partial \psi}}  = 
    \frac{1}{\popSize\sigma} \sum_{i=1}^{\popSize} \left[ \frac{W(F(\theta^i_1))-W(F(\theta^i_2))}{2\sigma} \right] \epsilon_i.
\end{align}

\section{Formalization of Optimization under Input Uncertainty (OIU)}
\label{sec:problem-setting}

In this section, we formalize OIU.
Let $\SampleSp \subset \R^n$ be an input space (possibly a discrete space), and $f \colon \SampleSp \to \R$ be an objective function over the input space.
Let $\SearchSp \subseteq \R^m$ and assume that, for each $\theta \in \SearchSp$, a probability distribution $\sampleDist{\theta}$ on $\SampleSp$ is associated.
Our aim is to maximize the following objective function over the search space $\SearchSp$:
\begin{align}
    \label{eq:objective-func}
    \baseObjFunc(\theta) := \E{\sampleDist{\theta}(x)}{f(x)}.
\end{align}
In the following, we call $\sampleDist{\theta}$ the \emph{input distribution}.

\subsection{Examples of OIU}
\label{subsubsec:example-problem}
\subsubsection{Continuous Optimization with Input Noise}
\label{subsubsec:simple-example-of-continuous-optimization-with-input-noise}
Let us consider the case where we optimize an objective function $f \colon \R^n \to \R$, but the input $x$ is perturbed by an input noise.
Specifically, in each trial, we choose an intended input $ \theta_i \in \R^n$.
Then, we observe a realized input $x_i$, which is perturbed by the input noise as $x_i = \theta_i + \epsilon_i$, where $\epsilon_i$ is the input noise that follows some distribution $h(\epsilon_i \mid \theta_i)$.
We then observe the feedback $f(x_i)$.
In this situation, our objective is to maximize the expected value of $\E{\epsilon_i}{f(\theta_i + \epsilon_i)}$.
By setting $\sampleDist{\theta_i}(x) = h(x - \theta_i \mid \theta_i)$, this problem is formulated within the framework of~\eqref{eq:objective-func}.

\subsubsection{Reinforcement Learning}
\label{subsubsec:reinforcement-learning}
In RL, our objective is to find a policy $\pi_{\theta}(a \mid s)$, where $a$ is an action and $s$ is a state, which maximizes the expected cumulative reward.
The expected cumulative reward is given by
\begin{align}
    \baseObjFunc(\theta) = \E{\forward^{\theta}}{\sum_{t=0}^\infty \gamma^t r(s_t, a_t)},
\end{align}
where $r(s, a)$ is a reward function and $\gamma \in [0,1)$ is a discount factor.
Here, we define $\forward^{\theta}$ as the law of the trajectory of states and actions produced by a policy $\pi_{\theta}(a \mid s)$ and a given transition matrix $\transitMatrix(s' \mid s, a)$.
The explicit form of $\forward^{\theta}$ is 
\begin{align}
    \forward^{\theta}(a_0, s_1, a_1, s_2, \dots \mid s_0) = \prod_{t=0}^\infty \pi_{\theta}(a_t \mid s_t) \transitMatrix(s_{t+1} \mid s_t, a_t).
\end{align}
By setting sample $x = (a_0, s_1, a_1, s_2, \dots)$ and $\sampleDist{\theta} = \forward^{\theta}$, we see that this problem is in the framework of~\eqref{eq:objective-func}.

\subsection{Extension of OIU}
We may extend OIU to Contextual Optimization under Input Uncertainty (COIU) where $f(x)$ and $p_{\theta}(x)$ are also dependent on a context $c$ as in $f(x,c)$ and $p_{\theta}(x|c)$. 
This extension covers a wider range of applications such as mixture of experts and classification with bandit feedback.
 To keep the main body of the text simple, we discuss this extension in Appendix \ref{sec:COIU}.

\section{Phenotype-Accelerated Evolutionary Strategy (PAES) for OIU}
\label{sec:algorithm}
In this section, we introduce PAES and its core component, the phenotype gradient estimator. As PAES is an extension of ES for OIU, we first illustrate how ES is adopted to solve OIU and how its gradient estimator can be improved by exploiting the realized input $x$.

\subsection{Evolutionary Strategy for OIU and Genotype Gradient Estimator}
Give the objective function $F(\theta)$ of OIU, its smoothed version by a distribution $\searchDist{\psi}(\theta)$ is obtained as
\begin{align}
    \label{eq:smoothe-obj-oiu}
    \smoothedObjFunc(\psi) := \E{\searchDist{\psi}(\theta)}{\baseObjFunc(\theta)} = \E{\searchDist{\psi}(\theta)}{\E{\sampleDist{\theta}(x)}{f(x)}}.
\end{align}
where $\searchDist{\psi}(\theta)$ is interpreted as the diversity of genotypes (Figure~\ref{fig:paes-schematic}).
We can also define a transformed objective function:
\begin{align}
    \smoothedObjFunc_{W}(\psi) := \E{\searchDist{\psi}(\theta)}{\E{\sampleDist{\theta}(x)}{W(f(x))}}.
\end{align}
The gradient is calculated via~\eqref{eq:gradient-log-derivative-without-transformation}:
\begin{align}
    \label{eq:gradient-log-derivative-input-noise}
    \frac{\partial \smoothedObjFunc_{W}}{\partial \psi} = \E{\searchDist{\psi}(\theta)}{\E{\sampleDist{\theta}(x)}{W(f(x))}  \frac{\partial \log \searchDist{\psi}(\theta)}{\partial \psi}}.
\end{align}
Then, we obtain an estimate of this expression by sampling:
\begin{definition}[Genotype gradient estimator]
    \begin{align}
        \label{eq:genotype-gradient-estimator}
        \genotypeEstimator \smoothedObjFunc_{W} := \frac{1}{\popSize} \sum_{i=1}^{\popSize} {W(f(x_i)) \frac{\partial \log \searchDist{\psi}(\theta_i)}{\partial \psi}}.
    \end{align}
Since this estimator is obtained by applying ES to the objective function $F(\theta)$ with respect to $\theta$, i.e., genotype information, we call it the genotype gradient estimator.
\end{definition}
We can also apply ZOO~\eqref{eq:gradient-approx-zeroth-order} to continuous optimization with input noise (Section \ref{subsubsec:simple-example-of-continuous-optimization-with-input-noise}), for which the genotype gradient estimator is given by
\begin{align}
    \label{eq:gradient-approx-zeroth-order-input-noise}
    \frac{\partial \smoothedObjFunc_{W}}{\partial \psi} \approx \frac{1}{\popSize \sigma} \sum_{i=1}^{\popSize} {W(f(x_i))}  \epsilon_i,
\end{align}
where $\epsilon_i = (\theta_i - \bar \theta) / \sigma$.

\subsection{Intuition behind Improvement of Gradient by Phenotypic Information}
One potential problem of the genotype gradient estimator~\eqref{eq:gradient-approx-zeroth-order-input-noise} is that it utilizes only the evaluation $f(x_i)$ and the intended input $\theta_i$, i.e., the fitness value and the gennotype information.
The information of realized input $x_i$, i.e., the phenotypic information, is discarded.
Let us illustrate that the realized input $x_i$ has richer information than $f(x_i)$ and $\theta_i$ for gradient estimation by the following example.

We consider an extreme example under the problem setting in Section~\ref{subsubsec:simple-example-of-continuous-optimization-with-input-noise}, where the population distribution is singular.
Concretely, we assume that $h(\epsilon_i \mid \theta_i) = \normal{\epsilon_i}{\bm{0}}{\identityMatrix}$, where $\epsilon_i = (x_i - \theta_i) / \sigma$.
We also assume that $\psi = \theta$ and $\searchDist{\psi}$ is a delta distribution $\delta_{\theta}$.
We further assume that $W(f(x)) = f(x)$.
As a consequence, the genotype gradient estimator~\eqref{eq:gradient-approx-zeroth-order-input-noise} becomes meaningless since we can no longer define $\frac{\partial \log \searchDist{\psi}(\theta)}{\partial \psi}$.
This is because, while the estimator~\eqref{eq:gradient-approx-zeroth-order-input-noise} uses the diversity of $\theta$ for the estimation of the gradient, there is no diversity of $\theta$ in this case.
However, by following a similar argument to the usual ES, we can estimate the gradient even in this case by using the realized input.
As shown in the next section (see Theorem~\ref{thm:full-gradient-estimator}), we have
\begin{align}
    \label{eq:ancestral-gradient-estimation}
    \frac{\partial \smoothedObjFunc_{W}}{\partial \psi} = \frac{1}{\sigma} \E{\sampleDist{\theta}(x)}{f(x)\epsilon} = \frac{1}{\sigma^2} \E{\sampleDist{\theta}(x)}{f(x)(x - \theta)},
\end{align}
where $\epsilon = (x - \theta) / \sigma$.
From this equation, we can estimate the gradient as follows:
\begin{align}
    \frac{\partial \smoothedObjFunc_{W}}{\partial \theta} \approx \frac{1}{\popSize \sigma^2} \sum_{i=1}^{\popSize} {f(x_i)(x_i - \theta)}.
\end{align}

This example illuminates that the realized input $x_i$ is useful for estimating gradient, and we can construct a new gradient estimator~\eqref{eq:ancestral-gradient-estimation}.

\subsection{Phenotype gradient estimator and PAES}
By generalizing the procedure in the example above, we define another gradient estimator:
\begin{definition}[Phenotype gradient estimator]
For a joint distribution $\forward(x, \theta) = \sampleDist{\theta}(x) \searchDist{\psi}(\theta)$ and its marginal distribution $\forward(x) = \int \forward(x, \theta)  \dd \theta$, a gradient estimator is defined as follows:
    \begin{align}
        \label{eq:full-gradient-estimator}
        \fullGradientEstimator \smoothedObjFunc_{W} := \frac{1}{\popSize} \sum_{i=1}^{\popSize} {W(f(x_i)) \frac{\partial \log \forward(x_i)}{\partial \psi}}.
    \end{align}
We call this estimator \emph{phenotype gradient estimator} since it utilizes the diversity of both genotype $\theta$ and phenotype $x$.
\end{definition}
The relationship between the genotype gradient estimator and the phenotype gradient estimator is illustrated in Figure~\ref{fig:paes-schematic}.
\begin{figure}[t]
    \centering
    \includegraphics[page=2, trim=30 160 30 30, clip, width=\textwidth]{fig/Fig.pdf}
    \caption{Schematic illustration of the application of ES to OIU and its refinement, phenotype-accelerated evolutionary strategy (PAES). We introduce $\searchDist{\psi}(\theta)$ as the population of ES, which we regard as the diversity of genotypes in the context of evolutionary biology.
    We can estimate the gradient by using only the genotypic information $\theta_i$ and the evaluation $f(x_i)$, which is called the genotype gradient estimator. However, we can also utilize the phenotypic information $x_i$ to improve the estimator, which we call the phenotype gradient estimator.
    }
    \label{fig:paes-schematic}
\end{figure}
\begin{theorem}
The phenotype gradient estimator is an unbiased estimator as the genotype gradient estimator is:
    \label{thm:full-gradient-estimator}
    \begin{align}
        \label{eq:full-gradient}
        \frac{\partial \smoothedObjFunc_{W}}{\partial \psi} = \E{\forward(x)}{W(f(x))\frac{\partial \log \forward(x)}{\partial \psi}}.
    \end{align}
In addition, the phenotype gradient estimator is superior to the genotype gradient estimator in that it has smaller variance than the genotype gradient estimator:
    \label{thm:genotype-gradient-estimator-property}
    \begin{align}
        \label{eq:genotype-gradient-estimator-property}
        \V{\forward}{\fullGradientEstimator \smoothedObjFunc_{W}} \leq \V{\forward}{\genotypeEstimator \smoothedObjFunc_{W}}.
    \end{align}
\end{theorem}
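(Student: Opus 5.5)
Both claims will follow from one identity. The score of the marginal is the conditional expectation of the score of $\searchDist{\psi}$ given the phenotype:
\begin{align*}
\frac{\partial \log \forward(x)}{\partial \psi} = \E{\forward(\theta \mid x)}{\frac{\partial \log \searchDist{\psi}(\theta)}{\partial \psi}},
\end{align*}
where $\forward(\theta\mid x) = \sampleDist{\theta}(x)\searchDist{\psi}(\theta)/\forward(x)$ is the posterior over genotypes. To prove it, I differentiate $\forward(x) = \int \sampleDist{\theta}(x)\searchDist{\psi}(\theta)\,\dd\theta$ under the integral sign; this uses that $\sampleDist{\theta}$ does not depend on $\psi$, and I assume differentiation and integration may be exchanged. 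Writing $\partial_\psi \searchDist{\psi} = \searchDist{\psi}\,\partial_\psi\log\searchDist{\psi}$ and dividing by $\forward(x)$ gives the identity.

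\textbf{Unbiasedness.} Start from \eqref{eq:gradient-log-derivative-input-noise} and write it as a joint expectation $\E{\forward(x,\theta)}{W(f(x))\,\partial_\psi \log\searchDist{\psi}(\theta)}$. Apply the tower property, conditioning on $x$. Since $W(f(x))$ depends only on $x$, it factors out of the inner expectation. By the identity, the inner expectation is $\partial_\psi\log\forward(x)$, which yields \eqref{eq:full-gradient}. A more direct route also works: $\partial_\psi \smoothedObjFunc_{W} = \partial_\psi \int W(f(x))\forward(x)\,\dd x$, followed by the log-derivative trick on $\forward(x)$. This direct route treats $W$ as fixed, i.e.\ it ignores the dependence of $W_{\{f(x_i)\}}$ on the sample, as the paper does throughout. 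Unbiasedness of the $\popSize$-sample average then follows by linearity.

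\textbf{Variance reduction (Rao--Blackwell).} Let $U(x,\theta) = W(f(x))\,\partial_\psi\log\searchDist{\psi}(\theta)$ be a single-sample genotype term. The identity shows that the phenotype term is exactly $\E{\forward(\theta\mid x)}{U(x,\theta)}$. The pairs $(x_i,\theta_i)$ are i.i.d., so both estimators are averages of i.i.d.\ terms. Conditioning the genotype estimator on $(x_1,\dots,x_\popSize)$ gives the phenotype estimator, because each $\theta_i$ depends only on $x_i$ under the posterior. The conditional Jensen inequality, applied to the convex map $v\mapsto\|v\|^2$, gives
\begin{align*}
\E{\forward}{\Bigl\|\E{}{\genotypeEstimator \smoothedObjFunc_{W} \mid x_{1:\popSize}}\Bigr\|^2} \le \E{\forward}{\Bigl\|\genotypeEstimator \smoothedObjFunc_{W}\Bigr\|^2}.
\end{align*}
With the paper's definition $\V{}{v}=\E{}{\|v\|^2}$, this is exactly \eqref{eq:genotype-gradient-estimator-property}. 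The inequality also holds for the centered variance, since both estimators have the same mean by the previous part. Equivalently, one can use the law of total variance: the gap is $\E{}{\|\genotypeEstimator \smoothedObjFunc_{W} - \fullGradientEstimator \smoothedObjFunc_{W}\|^2}\ge 0$.

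The main obstacle is the first identity, specifically justifying differentiation under the integral sign and the existence of the posterior. I will state regularity assumptions: $\searchDist{\psi}$ is differentiable in $\psi$ with an integrable dominating function, and $\forward(x)>0$ on the support. The remaining subtlety is the sample dependence of $W$. I will handle it by treating $W$ as a fixed deterministic function, or as depending only on $x_{1:\popSize}$. In either case $W$ is measurable with respect to the conditioning $\sigma$-algebra, so the Rao--Blackwell step still goes through even though unbiasedness for $\smoothedObjFunc_W$ becomes only formal.
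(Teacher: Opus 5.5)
Your proposal is correct and follows the paper's route: unbiasedness by differentiating $\int W(f(x))\forward(x)\,\dd x$ and applying the log-derivative trick, and variance reduction by showing via Fisher's identity (the paper's lemma) that the phenotype estimator is the Rao--Blackwellization of the genotype estimator, then invoking Rao--Blackwell. The paper proves the variance step by an explicit decomposition with a vanishing cross term, which is your law-of-total-variance remark, rather than by conditional Jensen. Your handling of the uncentered variance and of the sample dependence of $W$ is slightly more careful than the paper's.
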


\begin{proof}[Proof of Equation~\eqref{eq:full-gradient} and Equation~\eqref{eq:ancestral-gradient-estimation}]
    We have
    \begin{align}
        \frac{\partial \smoothedObjFunc_{W}}{\partial \psi} 
        &= \frac{\partial}{\partial \psi} \int W(f(x)) \forward(x) \dd x \\
        &= \int W(f(x)) \frac{\partial \forward(x)}{\partial \psi} \dd x \\
        &= \int  \left[  W(f(x))\frac{\partial \log \forward(x)}{\partial \psi} \right]   \forward(x)   \dd x \\
        &= \E{\forward(x)}{W(f(x)) \frac{\partial \log \forward(x)}{\partial \psi}}.
    \end{align}
    This proves Equation~\eqref{eq:full-gradient}.
    Equation~\eqref{eq:ancestral-gradient-estimation} is the special case of this equation.
\end{proof}
The proof of Equation~\eqref{eq:genotype-gradient-estimator-property} is given in Section~\ref{subsec:algorithm-proof}.

By replacing the genotype gradient estimator in ES for OIU with the phenotype gradient estimator, we introduce \emph{Phenotype-Accelerated Evolutionary Strategy} (PAES)~(Algorithm~\ref{alg:PAES}) as follows:
\begin{algorithm}
    \caption{Phenotype-Accelerated Evolutionary Strategy (PAES)}
    \label{alg:PAES}
    \begin{algorithmic}
        \State \textbf{Require:} Initial parameter $\suptime{\psi}{0}$, population size $\popSize$, learning rate $\learningRate$, stopping criterion
        \State \textbf{Input:} Objective function $f$, search distribution $\searchDist{\psi}$, sampling distribution $\sampleDist{\theta}$, transformation function $W$
        \State \textbf{Output:} Optimized parameter $\suptime{\psi}{t}$
        \State $t \gets 0$.
        \While{Stopping criterion is not met}
            \For{$i \in [\popSize]$}
                \State Sample $\suptime{\theta_i}{t} \sim \searchDist{\suptime{\psi}{t}}$.
                \State Sample $\suptime{x_i}{t} \sim \sampleDist{\suptime{\theta_i}{t}}$.
            \EndFor
            \State Calculate phenotype gradient estimator $\fullGradientEstimator \smoothedObjFunc_{W}$ via~\eqref{eq:full-gradient-estimator}.
            \State Update $\suptime{\psi}{t+1} \gets \suptime{\psi}{t} + \learningRate  \fullGradientEstimator \smoothedObjFunc_{W}$ or $\suptime{\psi}{t+1} \gets \suptime{\psi}{t} + \learningRate \FisherInfo{\forward(x)}^{-1} \fullGradientEstimator \smoothedObjFunc_{W}$.
            \State $t \gets t+1$.
        \EndWhile
    \end{algorithmic}
\end{algorithm}

\subsection{Phenotype Gradient Estimator as the Rao-Blackwellized Estimator}
To systematically extend generalized genotype gradient estimators (e.g., the two-point estimator~\eqref{eq:gradient-approx-two-point}) to their phenotype counterparts, it is crucial to first generalize Theorem~\ref{thm:genotype-gradient-estimator-property}.
Here, we clarify that this generalization is achieved by an argument similar to the Rao-Blackwell theorem.

Recall that, at each generation $t$, we sample $(\suptime{\theta_i}{t}, \suptime{x_i}{t})$ from $\forward(x, \theta)$ independently for $i \in [\popSize]$.
Let us assume that we have some unbiased estimator $\estimator{\{\suptime{\theta_i}{t}, \suptime{x_i}{t}\}_{i \in [\popSize]}}$ of the gradient for these samples $i \in [\popSize]$.
\begin{definition}[Rao-Blackwellized estimator]
    \label{def:rao-blackwellized-estimator}
    Given an estimator $\estimator{\{\suptime{\theta_i}{t}, \suptime{x_i}{t}\}_{i \in [\popSize]}}$, a Rao-Blackwellized estimator $\RaoBlackwellizeNoArgs{\estimatorChar}$ is defined by
    \begin{align}
        \label{eq:def-rao-blackwellized-estimator}
        \RaoBlackwellize{\estimatorChar}{\{\suptime{x_i}{t}\}_{i \in [\popSize]}} := \E{\forward[\{ \suptime{\theta_i}{t} \}_{i \in [\popSize]} \mid \{\suptime{x_i}{t}\}_{i \in [\popSize]}]}{\estimator{\{\suptime{\theta_i}{t}, \suptime{x_i}{t}\}_{i \in [\popSize]}}},
    \end{align}
    where the expectation is taken over $\forward(\theta \mid x)$ for all pairs of $(\suptime{\theta_i}{t}, \suptime{x_i}{t})$ independently.
    Here, $\forward(\theta \mid x)$ is the conditional distribution defined by $\forward(x, \theta) / \forward(x)$.
\end{definition}

For this Rao-Blackwellized estimator, we can prove a generalization of Theorem~\ref{thm:genotype-gradient-estimator-property} .
\begin{theorem}[Generalization of Theorem~\ref{thm:full-gradient-estimator}]
    The Rao-blackwellized estimator is unbiased.
    \label{thm:generalized-unbiasedness-of-rao-blackwellized-estimator}
    \begin{align}
        \label{eq:generalized-unbiasedness-of-rao-blackwellized-estimator}
        \E{\forward}{\RaoBlackwellize{\estimatorChar}{\{\suptime{x_i}{t}\}_{i \in [\popSize]}}} = \E{\forward}{\estimator{\{\suptime{\theta_i}{t}, \suptime{x_i}{t}\}_{i \in [\popSize]}}}.
    \end{align}
In addition, the Rao-blackwellized estimator has smaller variance than the original estimator:
    \label{thm:generalized-superiority-of-full-gradient-estimator}
    \begin{align}
        \label{eq:generalized-variance-reduction-of-rao-blackwellized-estimator}
        \V{\forward}{\RaoBlackwellize{\estimatorChar}{{\{\suptime{x_i}{t}\}_{i \in [\popSize]}}}} \leq \V{\forward}{\estimator{\{\suptime{\theta_i}{t}, \suptime{x_i}{t}\}_{i \in [\popSize]}}}.
    \end{align}
\end{theorem}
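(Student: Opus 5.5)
The plan is to treat this as the standard Rao--Blackwell argument, using only the tower property of conditional expectation and the conditional Jensen (or variance-decomposition) inequality. Throughout, write $\bm{\theta} = \{\suptime{\theta_i}{t}\}_{i \in [\popSize]}$ and $\bm{x} = \{\suptime{x_i}{t}\}_{i \in [\popSize]}$. Because the pairs $(\suptime{\theta_i}{t}, \suptime{x_i}{t})$ are drawn i.i.d.\ from $\forward(x,\theta)$, the joint law of $(\bm{\theta}, \bm{x})$ factorizes as $\prod_i \forward(x_i)\forward(\theta_i \mid x_i)$. So the conditional law of $\bm{\theta}$ given $\bm{x}$ is exactly the product $\prod_i \forward(\theta_i \mid x_i)$ used in Definition~\ref{def:rao-blackwellized-estimator}. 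This identifies $\RaoBlackwellize{\estimatorChar}{\bm{x}}$ with the genuine conditional expectation $\E{\forward}{\estimator{\bm{\theta},\bm{x}} \mid \bm{x}}$. I expect this identification to be the only point needing care; everything afterwards is mechanical.

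For unbiasedness, apply the tower property:
\begin{align}
\E{\forward}{\RaoBlackwellize{\estimatorChar}{\bm{x}}} = \E{\forward(\bm{x})}{\E{\forward}{\estimator{\bm{\theta},\bm{x}} \mid \bm{x}}} = \E{\forward}{\estimator{\bm{\theta},\bm{x}}},
\end{align}
which is \eqref{eq:generalized-unbiasedness-of-rao-blackwellized-estimator}. This requires $\estimatorChar$ to be integrable, which I will assume, as is implicit when calling it an unbiased estimator.

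For the variance inequality, note that the paper defines $\V{p(v)}{v}=\E{p(v)}{\|v\|^2}$, a raw second moment rather than a centered variance. I will prove the inequality for this quantity, and the centered version then follows because both estimators have the same mean by the first part. The key step is the conditional Jensen inequality for the convex function $\|\cdot\|^2$:
\begin{align}
\|\E{\forward}{\estimator{\bm{\theta},\bm{x}} \mid \bm{x}}\|^2 \le \E{\forward}{\|\estimator{\bm{\theta},\bm{x}}\|^2 \mid \bm{x}}.
\end{align}
Equivalently, the gap equals the conditional expectation of $\|\estimatorChar - \RaoBlackwellizeNoArgs{\estimatorChar}\|^2$ given $\bm{x}$, which is nonnegative. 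Taking the outer expectation over $\bm{x}$ and applying the tower property again yields \eqref{eq:generalized-variance-reduction-of-rao-blackwellized-estimator}, assuming finite second moments.

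Finally, I will note as a check that Theorem~\ref{thm:full-gradient-estimator} is the special case $\estimatorChar=\genotypeEstimator \smoothedObjFunc_{W}$. Since $W(f(x_i))$ depends only on $x_i$, conditioning reduces the problem to computing $\E{\forward(\theta\mid x)}{\partial_\psi \log \searchDist{\psi}(\theta)}$. Write $\partial_\psi \forward(x) = \int \sampleDist{\theta}(x)\,\partial_\psi \searchDist{\psi}(\theta)\,\dd\theta$, then divide by $\forward(x)$. This gives $\partial_\psi \log \forward(x)$, so the Rao--Blackwellization of the genotype estimator is exactly the phenotype estimator, and \eqref{eq:genotype-gradient-estimator-property} follows from the general result.
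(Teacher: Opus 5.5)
Your proposal is correct and follows essentially the paper's route: unbiasedness by the tower property, and the variance bound from the orthogonality of $\estimatorChar - \RaoBlackwellizeNoArgs{\estimatorChar}$ to functions of $\{\suptime{x_i}{t}\}_{i \in [\popSize]}$. The paper phrases that step as the vanishing cross term in the expansion of $\E{\forward}{(\estimatorChar - u)^2}$, whereas you phrase it as conditional Jensen for $\|\cdot\|^2$; your explicit identification of Definition~\ref{def:rao-blackwellized-estimator} with the true conditional expectation via the i.i.d.\ factorization, and your reconciliation with the paper's uncentered definition of $\VVal$, are points the paper leaves implicit.
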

The proofs are given in Section~\ref{subsec:algorithm-proof}.
We can check that Theorem~\ref{thm:genotype-gradient-estimator-property} is a special case of Rao-Blackwellization by using the following proposition:
\begin{proposition}[Phenotype gradient estimator is a Rao-Blackwellized estimator]
    \label{thm:rao-blackwellized-genotype-gradient-estimator}
    \begin{align}
        \RaoBlackwellizeNoArgs{\genotypeEstimator \smoothedObjFunc_{W}} = \fullGradientEstimator \smoothedObjFunc_{W}.
    \end{align}
\end{proposition}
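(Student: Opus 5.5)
The plan is to compute the Rao-Blackwellization of the genotype gradient estimator directly from Definition~\ref{def:rao-blackwellized-estimator}. The genotype estimator~\eqref{eq:genotype-gradient-estimator} is a sum over $i$ whose $i$-th term depends only on $(\theta_i, x_i)$. The conditional expectation is taken over $\forward(\theta_i \mid x_i)$ independently for each pair. By linearity, therefore, it suffices to show, for a single pair $(\theta, x)$,
\begin{align}
    \E{\forward(\theta \mid x)}{W(f(x)) \frac{\partial \log \searchDist{\psi}(\theta)}{\partial \psi}} = W(f(x)) \frac{\partial \log \forward(x)}{\partial \psi}.
\end{align}
Since $W(f(x))$ depends only on $x$, it factors out of the conditional expectation. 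One caveat: $W$ may depend on the whole set $\{f(x_j)\}_j$, but that set is also a function of the $x$'s only, so it likewise factors out.

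The core identity I then need is the ``Fisher identity''
\begin{align}
    \E{\forward(\theta \mid x)}{\frac{\partial \log \searchDist{\psi}(\theta)}{\partial \psi}} = \frac{\partial \log \forward(x)}{\partial \psi}.
\end{align}
I would prove it by writing $\forward(x) = \int \sampleDist{\theta}(x)\searchDist{\psi}(\theta)\,\dd\theta$ and differentiating under the integral sign. Only $\searchDist{\psi}$ depends on $\psi$; the input distribution $\sampleDist{\theta}$ depends on $\theta$ but not on $\psi$. This gives
\begin{align}
    \frac{\partial \forward(x)}{\partial \psi} = \int \sampleDist{\theta}(x)\searchDist{\psi}(\theta)\frac{\partial \log \searchDist{\psi}(\theta)}{\partial \psi}\,\dd\theta.
\end{align}
Dividing by $\forward(x)$ and recognizing $\sampleDist{\theta}(x)\searchDist{\psi}(\theta)/\forward(x) = \forward(\theta \mid x)$ yields the identity. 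Summing over $i$ and dividing by $\popSize$ then recovers exactly~\eqref{eq:full-gradient-estimator}.

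The only delicate point is regularity: the interchange of $\partial/\partial\psi$ and the integral over $\theta$, together with $\forward(x) > 0$ on the support. I would state these as standing assumptions, the same ones already used implicitly in the proof of~\eqref{eq:full-gradient}, rather than prove them. Once the proposition is established, combining it with Theorem~\ref{thm:generalized-unbiasedness-of-rao-blackwellized-estimator} immediately yields~\eqref{eq:genotype-gradient-estimator-property}.
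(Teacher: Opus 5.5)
Your proposal is correct and follows the paper's proof essentially step for step. Like the paper, you use linearity and pull $W(f(x_i))$ out of the conditional expectation, then apply Fisher's identity (Lemma~\ref{lem:gen-full-log-derivative}), proved by differentiating $\forward(x)=\int \sampleDist{\theta}(x)\searchDist{\psi}(\theta)\,\dd\theta$ under the integral sign; your added remark that a population-dependent $W$ still factors out because it depends only on the $x_j$ is a correct detail the paper leaves implicit.
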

The proof is given in Section~\ref{subsec:algorithm-proof}.

\subsection{Analytical Form of Phenotype Gradient Estimator}
\label{sec:analytical-form-of-full-gradient-estimator}
The phenotype gradient estimator requires the computation of $\frac{\partial \log \forward(x)}{\partial \psi}$.
This can be analytically achieved when the input distribution $\sampleDist{\theta}$ belongs to an exponential family and $\searchDist{\psi}$ belongs to its conjugate family.

Suppose that $x \in \SampleSp \subseteq \R^k$ and
\begin{align}
    \sampleDist{\theta}(x) = \baseMeasureExpFamilyChar(x)\exp \left( \theta \cdot  \sufficientStatistic{x}  - A(\theta) \right),
\end{align}
where $\sufficientStatistic{x} \colon \SampleSp \to \R^n$ and $h \colon \SampleSp \to  \R$ , and $A(\theta)$ is a logarithm of the normalization constant defined by
\begin{align}
    A(\theta) = \log \int  \baseMeasureExpFamilyChar(x)\exp \left( \theta \cdot  \sufficientStatistic{x}  \right) \dd x.
\end{align}
We also assume that $\searchDist{\psi}$ belongs to the conjugate family of $\sampleDist{\theta}$:
\begin{align}
    \searchDist{\psi}(\theta) = \exp \left( \nu \phi \cdot \theta -\nu A(\theta) - B(\phi, \nu) \right),
\end{align}
where $\nu \in \R$, $\psi = (\phi, \nu) \in \R^k \times \R$, and $B(\phi, \nu)$ is the logarithm of the normalization constant defined by
\begin{align}
    B(\phi, \nu) = \log \int  \exp \left( \nu \phi \cdot \theta -\nu A(\theta) \right) \dd \theta.
\end{align}
\begin{proposition}
For the exponential family and its conjugate, $\frac{\partial \log \forward(x)}{\partial \psi}$ is analytically represented as
\begin{align}
    \label{eq:analytical-form-of-full-gradient-estimator-for-exponential-family}
    &\frac{\partial \log \forward(x)}{\partial \phi} = \E{\searchDist{\psi'}(\theta)}{\nu' \theta} - \E{\searchDist{\psi}(\theta)}{\nu \theta},\\
    &\frac{\partial \log \forward(x)}{\partial \nu} = \E{\searchDist{\psi'}(\theta)}{\phi' \cdot \theta - A(\theta)} - \E{\searchDist{\psi}(\theta)}{\phi \cdot \theta - A(\theta)}.
\end{align}
where $\psi' = (\phi', \nu')$ and
\begin{align}
    &\phi' = \frac{\nu \phi + \sufficientStatistic{x}}{\nu + 1},\\
    &\nu' = \nu + 1.
\end{align}
Moreover, if we parameterize $\searchDist{\psi}$ by the expectation parameter,
\begin{align}
    \eta(\psi) = (\E{\searchDist{\psi}(\theta)}{\nu \theta}, \E{\searchDist{\psi}(\theta)}{\phi \cdot \theta - A(\theta)})^{\transpose},
\end{align}
the natural gradient is calculated as follows~\cite{ollivier2017information}:
\begin{align}
    \label{eq:natural-gradient-for-exponential-family}
    &\nabla_{\eta_{\phi}} \log \forward(x) = \E{\searchDist{\psi'}(\theta)}{\nu' \theta} - \E{\searchDist{\psi}(\theta)}{\nu \theta},\\
    &\nabla_{\eta_{\nu}} \log \forward(x) = \E{\searchDist{\psi'}(\theta)}{\phi' \cdot \theta - A(\theta)} - \E{\searchDist{\psi}(\theta)}{\phi \cdot \theta - A(\theta)}.
\end{align}
The proof is given in Section~\ref{subsec:analytical-proof}.
\end{proposition}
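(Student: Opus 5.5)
The proof is a direct computation of the marginal $\forward(x)=\int \sampleDist{\theta}(x)\searchDist{\psi}(\theta)\,\dd\theta$, which conjugacy lets us write in closed form. Multiplying the two densities, the exponent is
\begin{align}
    \theta\cdot \sufficientStatistic{x} - A(\theta) + \nu\phi\cdot\theta - \nu A(\theta) - B(\phi,\nu) = (\nu+1)\phi'\cdot\theta - (\nu+1)A(\theta) - B(\phi,\nu),
\end{align}
with $\phi' = (\nu\phi+\sufficientStatistic{x})/(\nu+1)$ and $\nu'=\nu+1$. The $\theta$-integral is therefore $\exp B(\phi',\nu')$ by the definition of $B$, which gives
\begin{align}
    \log \forward(x) = \log \baseMeasureExpFamilyChar(x) + B(\phi',\nu') - B(\phi,\nu).
\end{align}
This identity is the backbone: all dependence on $\psi$ sits in the two log-partition terms, with $\psi'$ a simple affine function of $\psi$ for fixed $x$.

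Next I use the standard log-partition identities for the conjugate family. Differentiating under the integral in the definition of $B$ gives
\begin{align}
    \partial_\phi B(\phi,\nu)=\E{\searchDist{\psi}(\theta)}{\nu\theta}, \qquad \partial_\nu B(\phi,\nu)=\E{\searchDist{\psi}(\theta)}{\phi\cdot\theta - A(\theta)}.
\end{align}
I then differentiate $B(\phi',\nu')$ with respect to $(\phi,\nu)$ by the chain rule. The cleanest route avoids differentiating the ratio $\phi'$ directly. Instead, I write the exponent in natural coordinates $(\nu\phi,\nu)$, under which the update $(\nu\phi,\nu)\mapsto(\nu\phi+\sufficientStatistic{x},\nu+1)$ is a pure translation, so its Jacobian is the identity. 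Gradients of $B$ in these coordinates are $\E{}{\theta}$ and $\E{}{-A(\theta)}$, and they transfer unchanged from $\psi'$ to $\psi$. Taking differences and converting back to the stated parametrization gives the two displayed formulas: each is the corresponding expectation under $\searchDist{\psi'}$ minus the same expectation under $\searchDist{\psi}$.

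\textbf{Main obstacle.} The delicate step is exactly this coordinate bookkeeping. The statement's expressions, $\E{}{\nu'\theta}$ and $\E{}{\phi'\cdot\theta - A(\theta)}$, are precisely the partial derivatives of $B$ in $(\phi,\nu)$ at $\psi'$. The question is whether the chain rule through $\phi'(\phi,\nu)$, $\nu'(\nu)$ reproduces them exactly, or whether extra Jacobian factors appear. My first attempt is to compute in natural coordinates, where the translation structure makes the identity transparent. I then verify consistency with the paper's parametrization, interpreting the formulas as gradients with respect to the natural parameter if the factors do not cancel.

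\textbf{Natural-gradient part.} For an exponential family $\searchDist{\psi}$, the gradient of any function with respect to the natural parameter equals the natural gradient with respect to the expectation parameter $\eta(\psi)$. The reason is that the Fisher matrix is the Jacobian $\partial\eta/\partial(\text{natural})$; this is the standard fact from \cite{ollivier2017information}. Applying it to $\log\forward(x)$ turns the previous expressions directly into \eqref{eq:natural-gradient-for-exponential-family}.
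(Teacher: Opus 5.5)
Your first step is exactly the paper's: $\log\forward(x) = \log h(x) + B(\phi',\nu') - B(\phi,\nu)$, followed by $\partial_\phi B = \E{\searchDist{\psi}(\theta)}{\nu\theta}$ and $\partial_\nu B = \E{\searchDist{\psi}(\theta)}{\phi\cdot\theta - A(\theta)}$. The step you flag as the main obstacle is a genuine gap, and it cannot be closed. The Jacobian factors do not cancel, and you assert rather than check that converting back ``gives the two displayed formulas''. With $\partial\phi'/\partial\phi = \nu/(\nu+1)$ and $\partial\phi'/\partial\nu = (\phi-\phi')/(\nu+1)$,
\begin{align*}
\frac{\partial B(\phi',\nu')}{\partial \phi} &= \frac{\nu}{\nu+1}\,\E{\searchDist{\psi'}(\theta)}{\nu'\theta} = \E{\searchDist{\psi'}(\theta)}{\nu\theta},\\
\frac{\partial B(\phi',\nu')}{\partial \nu} &= \frac{\phi-\phi'}{\nu+1}\cdot\E{\searchDist{\psi'}(\theta)}{\nu'\theta} + \E{\searchDist{\psi'}(\theta)}{\phi'\cdot\theta - A(\theta)} = \E{\searchDist{\psi'}(\theta)}{\phi\cdot\theta - A(\theta)}.
\end{align*}
So the Jacobian turns $\nu'$ into $\nu$ and $\phi'$ into $\phi$ inside the first expectation. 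This gives $\partial_\phi\log\forward(x) = \E{\searchDist{\psi'}(\theta)}{\nu\theta} - \E{\searchDist{\psi}(\theta)}{\nu\theta}$ and $\partial_\nu\log\forward(x) = \E{\searchDist{\psi'}(\theta)}{\phi\cdot\theta - A(\theta)} - \E{\searchDist{\psi}(\theta)}{\phi\cdot\theta - A(\theta)}$.

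Your translation argument gives the same thing if you carry the conversion out: with $\lambda := \nu\phi$, use $\partial_\phi = \nu\,\partial_\lambda$ and $\partial_\nu|_\phi = \phi\cdot\partial_\lambda + \partial_\nu|_\lambda$. The paper's Fisher identity (Lemma~\ref{lem:gen-full-log-derivative}) also gives it, because the conjugate posterior is $\forward(\theta\mid x) = \searchDist{\psi'}(\theta)$.

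The discrepancies $\E{\searchDist{\psi'}(\theta)}{\theta}$ and $(\phi'-\phi)\cdot\E{\searchDist{\psi'}(\theta)}{\theta}$ do not vanish in general. Take Bernoulli $x$ with $\sufficientStatistic{x} = x$ and $A(\theta) = \log(1+e^{\theta})$. Then $\searchDist{\psi}$ is the law of the logit of a $\mathrm{Beta}(\nu\phi,\nu(1-\phi))$ variable, and $\forward(1) = \phi$. At $\phi = 1/2$, $\nu = 2$ the true value is $\partial_\phi\log\forward(1) = 2$. But $\E{\searchDist{\psi}(\theta)}{\theta} = 0$ and $\E{\searchDist{\psi'}(\theta)}{\theta} = \diGamma(2) - \diGamma(1) = 1$, so the displayed formula equals $3$.

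Reading the formulas as natural-coordinate gradients does not rescue them either. In $(\lambda,\nu)$ the gradient is $\bigl(\E{\searchDist{\psi'}(\theta)}{\theta} - \E{\searchDist{\psi}(\theta)}{\theta},\ \E{\searchDist{\psi}(\theta)}{A(\theta)} - \E{\searchDist{\psi'}(\theta)}{A(\theta)}\bigr)$, which is again not the displayed expression. The paper's own proof reaches the displayed formulas by the same jump you make: it evaluates $\nabla B$ at $\psi'$ and never applies the Jacobian of $(\phi,\nu)\mapsto(\phi',\nu')$.

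The natural-gradient step inherits the problem. The fact you cite (natural gradient in expectation coordinates equals the plain gradient in natural coordinates) concerns the natural parameter $(\lambda,\nu)$ of $\searchDist{\psi}$. Its sufficient statistic is $(\theta,-A(\theta))$, and its dual is $\bigl(\E{\searchDist{\psi}(\theta)}{\theta}, -\E{\searchDist{\psi}(\theta)}{A(\theta)}\bigr)$. The statement's $\eta(\psi)$ is instead $\nabla_{(\phi,\nu)}B$, and $(\phi,\nu)$ is not an affine reparametrization of $(\lambda,\nu)$. Consequently the Jacobian $\partial\eta/\partial(\phi,\nu) = \nabla^2_{(\phi,\nu)}B$ differs from the Fisher matrix in $(\phi,\nu)$ by off-diagonal blocks $\E{\searchDist{\psi}(\theta)}{\theta}$, which come from the bilinear term $\nu\phi\cdot\theta$. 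So the identity does not transfer.

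What your approach does establish is the natural-coordinate gradient above. Via the cited fact, that gradient equals the natural gradient with respect to the genuine expectation parameter. A correct write-up must state that version, or the corrected $(\phi,\nu)$ formulas, in place of the displayed ones.
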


\subsection{Remark on the Space Where Fisher Information Matrix Is Defined}
In equations~\eqref{eq:natural-gradient-for-exponential-family}, the Fisher information matrix is defined by~\eqref{eq:fisher-information-matrix-search-dist}.
However, we have another choice of the Fisher information matrix based on $\forward$:
\begin{align}
    \FisherInfo{\forward(x)} = \E{\forward(x)}{\frac{\partial \log \forward(x)}{\partial \psi} \frac{\partial \log \forward(x)}{\partial \psi}^{\transpose}}.
\end{align}
For some cases (e.g. Section~\ref{subsec:gaussian-search-and-sampling}), this Fisher information matrix leads to an easier update rule.
Let us compare these two definitions of the natural gradient. As discussed in~\cite{amari2016information}, the natural gradient is characterized by the following variational problem.
For any distribution $p$, we have
\begin{align}
    \FisherInfo{p}^{-1} \frac{\partial \smoothedObjFunc_{W}}{\partial \psi} = \lim_{\epsilon \to 0} \argmax_{\delta p} \left[ \smoothedObjFunc_{W}(\psi + \delta p)  \right],
\end{align}
where $\delta p$ satisfies $\KL(p \mid p + \delta p) = \epsilon$.
Thus, the natural gradient is the direction where the objective function increases the most when we measure the "distance" by the KL divergence.
Under this interpretation, the natural gradient with $\FisherInfo{\searchDist{\psi}(\theta)}$ is the direction to which the objective function increases the most when the distance is measured by $\KL(\searchDist{\psi}(\theta) \mid \searchDist{\psi + \delta \psi}(\theta))$.
On the other hand, the natural gradient with $\FisherInfo{\forward(x)}$ is the direction to which the objective function increases the most when the distance is measured by $\KL(\forward(x \mid \psi) \mid \forward(x \mid \psi + \delta \psi))$.
We also note that the Fisher information matrix of $\forward(x, \theta)$ is the same as that of $\searchDist{\psi}(\theta)$:
\begin{align}
    \FisherInfo{\forward(x, \theta)} = \E{\forward(x, \theta)}{\frac{\partial \log \forward(x, \theta)}{\partial \psi} \frac{\partial \log \forward(x, \theta)}{\partial \psi}^{\transpose}} = \FisherInfo{\searchDist{\psi}(\theta)}.
\end{align}
Therefore, we have two options for the distance measure of probability distributions and the corresponding Fisher metric, and we should choose the one that is best suited to the problem under consideration.

\subsection{Approximation of Phenotype Gradient Estimator via Reparameterization Trick}
\label{sec:reparameterization-trick}
Most OIU problems may not fall within the case of Section~\ref{sec:analytical-form-of-full-gradient-estimator}.
To cover a wider range of problems, approximation methods to calculate the phenotype gradient estimator are crucial.
We derive it by the application of the reparameterization trick.

Let us assume that $\theta$ is decided deterministically from some noise: $\theta = \thetaFunc{\psi}{\noise}$, where $\noise$ follows a distribution $\noiseDist(\noise)$ and $\thetaFunc{\psi}{\cdot}$ is a deterministic function.
A typical example is Gaussian input noise: $\noise \sim \normal{\noise}{\bm{0}}{\identityMatrix}$ and $\thetaFunc{\psi}{\noise} = \bar \theta + \sigma \noise$, where $\psi = (\bar \theta, \sigma)$.
In this setting, the expectation $\smoothedObjFunc_W$ of the objective function becomes
\begin{align}
    \smoothedObjFunc(\psi) = \E{\noiseDist(\noise)}{
        \E{\sampleDist{\thetaFunc{\psi}{\noise}}(x)}{
            W(f(x))
        }
    }.
\end{align}
Then, we can calculate the gradient as
\begin{align}
    \frac{\partial \smoothedObjFunc_{W}}{\partial \psi} = \E{\noiseDist(\noise)}{ \frac{\partial \thetaFunc{\psi}{\noise}}{\partial \psi} \left[ \frac{\partial}{\partial \theta} \E{\sampleDist{\theta}(x)}{W(f(x))} \right]_{\theta = \thetaFunc{\psi}{\noise}} }.
\end{align}
By applying the log-derivative trick to the inner expectation, we have
\begin{align}
    \frac{\partial \smoothedObjFunc_{W}}{\partial \psi} = \E{\noiseDist(\noise)}{ \frac{\partial \thetaFunc{\psi}{\noise}}{\partial \psi} \E{\sampleDist{\theta}(x)}{W(f(x)) \left[ \frac{\partial \log \sampleDist{\theta}(x)}{\partial \theta} \right]_{\theta = \thetaFunc{\psi}{\noise}} } }.
\end{align}
By approximating the right hand side by samples, we have the following gradient estimator:
\begin{align}
    \label{eq:reparameterized-gradient-estimator}
    \reparameterizedEstimator \smoothedObjFunc_{W} = \frac{1}{\popSize} \sum_{i=1}^{\popSize} \frac{\partial \thetaFunc{\psi}{\noise_i}}{\partial \psi}W(f(x_i)) \left[ \frac{\partial \log \sampleDist{\theta}(x_i)}{\partial \theta} \right]_{\theta = \thetaFunc{\psi}{\noise_i}} .
\end{align}
This estimator is not Rao-Blackwellized since it contains irrelevant random variables $\noise_i$ on the right hand side, which are not deterministically decided by $\psi$ and $x$.
However, the dependency on $\noise$ is very weak in some cases.
We will see an example in Section~\ref{subsec:example-rl-reparameterization}.

\section{Phenotype Gradient Estimators for Representative OIU Problems}
\label{sec:example-of-paes}
In this section, we illustrate how the phenotype gradient estimator is obtained for specific OIU problems. 
For COIU which includes mixture of experts and classification with bandit feedback, see Appendix \ref{sec:COIU}.

\subsection{OIU with Gaussian Distributions and its Phenotype Gradient Estimator}
\label{subsec:gaussian-search-and-sampling}
Let us consider the case where $\searchDist{\psi}$ and $\sampleDist{\theta}$ are Gaussian as in Example~\ref{subsubsec:simple-example-of-continuous-optimization-with-input-noise}.
Concretely, $x, \theta \in \R^n$ and
\begin{align}
    \sampleDist{\theta}(x) = \normal{x}{\theta}{\sigma^2 \identityMatrix},
\end{align}
for some given $\sigma > 0$.
We also assume that $\psi = (\bar \theta, \Sigma)$ and
\begin{align}
    \searchDist{\psi}(\theta) = \normal{\theta}{\bar \theta}{\Sigma},
\end{align}
where $\Sigma$ is a covariance matrix.
Then, 
\begin{align}
    \forward(x) = \normal{x}{\bar \theta}{\Sigma + \sigma^2 \identityMatrix},
\end{align}
and we can calculate the phenotype gradient estimator analytically:
\begin{align}
    \label{eq:natural-full-gradient-estimator-for-gaussian-search-and-sampling}
    &\FisherInfo{\forward(x)}^{-1} \fullGradientGeneralArg{\bar \theta} \smoothedObjFunc_{W} = \frac{1}{\popSize}\sum_{i=1}^{\popSize} W(f(x_i))(x_i - \bar \theta),\\
    &\FisherInfo{\forward(x)}^{-1} \fullGradientGeneralArg{\Sigma} \smoothedObjFunc_{W} = \frac{1}{\popSize} \sum_{i=1}^{\popSize} W(f(x_i))  (x_i - \bar \theta)(x_i - \bar \theta)^{\transpose} - \Sigma - \sigma^2 \identityMatrix,
\end{align}
where the Fisher information matrix $\FisherInfo{\forward(x)}$ is defined by~\eqref{eq:fisher-information-matrix-search-dist}.
The derivation is given in Appendix ~\ref{subsec:gaussian-search-and-sampling-proof}.
This result is similar to the rank-$\mu$ update of CMA-ES~\cite{akimoto2010bidirectional, hansen2016cma,ollivier2017information}.
When we use the Fisher information matrix for $\searchDist{\psi}(\theta)$, we do not have a clear expression as in~\eqref{eq:natural-full-gradient-estimator-for-gaussian-search-and-sampling}. Therefore, in the Gaussian OIU case, it is more practical to use the natural phenotype gradient estimator for $\forward(x)$ rather than for $\searchDist{\psi}(\theta)$.

\subsection{Reinforcement Learning and its Phenotype Gradient Estimator}
\label{subsec:example-reinforcement-learning}
We next calculate the natural phenotype gradient estimator for RL problem introduced in Section~\ref{subsubsec:reinforcement-learning}.
We first consider a case where the state $\stateSp$ and the action $\actionSp$ are both discrete.
In addition, for simplicity, we fix the initial state $x_0$.
In this subsection, we take the genotype $\theta$ to be the policy $\pi$ itself.
Let $j(s', a, s) := \sum_{t=0}^T \mathbb{I}(s_t = s', a_t = a, s_{t+1} = s)$ be the empirical count of the transition $(s', a, s)$ in an observed sequence $x$.
Also, let $j(a, s) := \sum_{t=0}^{T-1} \mathbb{I}(s_t = s, a_t = a)$ be the empirical count of the state-action pair $(a, s)$ in the observed sequence $x$.
Then,
\begin{align}
    \sampleDist{\theta}(x) = \left(\prod_{t=0}^{T-1} \transitMatrix(s_{t+1} \mid s_t, a_t)\right) \prod_{s \in \stateSp, a \in \actionSp} \pi(a \mid s)^{j(a, s)}.
\end{align}
We introduce a Dirichlet distribution as the conjugate prior $\searchDist{\psi}$:
\begin{align}
    &\psi = (\alpha_{a,s})_{a \in \actionSp, s \in \stateSp},\\
    &\searchDist{\psi}(\theta) = \prod_{s \in \stateSp} \mathrm{Dir}(\bm{\pi}(\cdot \mid s) \mid (\alpha_{a,s})_{a \in \actionSp}).
\end{align}

Let $j_i$ be the empirical count of the transition in the $i$-th trial.
The phenotype gradient estimator is given by
\begin{align}
    \label{eq:full-gradient-estimator-for-reinforcement-learning-with-discrete-state-and-action}
    \fullGradientGeneralArg{\alpha_{a,s}} \smoothedObjFunc_{W} = \frac{1}{\popSize}\sum_{i=1}^{\popSize}W(f(x_i)) \left[ \left( \sum_{l=0}^{j_i(a,s)-1}  \frac{1}{l + \alpha_{a,s}}\right) -  \left( \sum_{l=0}^{\sum_{a' \in \actionSp} j_i(a',s) -1} \frac{1}{l + \sum_{a' \in \actionSp} \alpha_{a',s}}\right) \right].
\end{align}
The derivation is given in Section~\ref{subsec:example-reinforcement-learning-proof}.

\subsection{Application of Reparameterization Trick to RL}
\label{subsec:example-rl-reparameterization}

When the states and actions are continuous, we need to utilize the reparameterization trick to approximate the phenotype gradient estimator. In contrast to Section~\ref{subsec:example-reinforcement-learning}, here we take the genotype $\theta$ to be logit of the policy:
\begin{align}
    \label{eq:rl-genotype-logit}
    \theta &:= (\theta_{a,s})_{a \in \actionSp, s \in \stateSp},\\
    \qquad
    \pi_{\theta}(a \mid s) &= \frac{\exp(\theta_{a,s})}{\sum_{a' \in \actionSp} \exp(\theta_{a',s})}.
\end{align}
We assume the ZOO setting where the population is Gaussian.
Concretely, we assume
\begin{align}
    \theta = \thetaFunc{\psi}{\noise} := \bar \theta + \sigma \noise,\\
    \noise \sim \normal{\noise}{\bm{0}}{\identityMatrix},
\end{align}
where $\sigma > 0$ is a fixed hyperparameter and $\bar \theta$ is the parameter to be optimized.
We identify $\bar \theta$ with $\psi$ in the following discussion.

Let us compute the reparameterized gradient estimator~\eqref{eq:reparameterized-gradient-estimator}.
We decompose the empirical count $j(a, s) := \sum_{t=0}^{T-1} \mathbb{I}(s_t = s, a_t = a)$ as $j(a, s) = j(s) j(a \mid s)$, where $j(s) = \sum_{a \in \actionSp} j(a, s)$ is the empirical count of the state $s$ and $j(a \mid s) = j(a, s) / j(s)$ is the empirical conditional distribution of the action $a$ given the state $s$.
Then, the reparameterized gradient estimator~\eqref{eq:reparameterized-gradient-estimator} is given by
\begin{align}
    \label{eq:reparameterized-gradient-estimator-example}
    \reparameterizedEstimator_{a,s} \smoothedObjFunc_{W} = \frac{1}{\popSize} \sum_{i=1}^{\popSize} W(f(x_i)) 
    \left[ j(a, s) - j(s) \pi_{\thetaFunc{\psi}{\noise}}(a \mid s) \right].
\end{align}
The derivation is given in Section~\ref{subsubsec:example-rl-reparameterization-proof}.
This update rule resembles ancestral reinforcement learning~\cite{nakashima2024ancestralreinforcementlearningunifying} in that a descendant generation tends to repeat the action of its ancestors with higher cumulative rewards.

This estimator is not an exact Rao-Blackwellization, since it contains $\noise$.
Yet, the dependency on $\noise$ is very weak when $\sigma$ is small.
This fact is verified by the Taylor expansion of $\pi_{\thetaFunc{\psi}{\noise}}(a \mid s)$ around $\theta = \thetaFunc{\psi}{\noise}$:
\begin{align}
    \pi_{\thetaFunc{\psi}{\noise}}(a \mid s)  
    &= \pi_{\bar \theta}(a \mid s) + \left[ \mathrm{diag}(\bm{\pi}_{\bar \theta}(\cdot \mid s)) - \bm{\pi}_{\bar \theta}(\cdot  \mid s) \bm{\pi}_{\bar \theta}(\cdot \mid s)^{\transpose} \right] \sigma \noise + O(\|\sigma \noise\|^2)\\
    &= \pi_{\bar \theta}(a \mid s) + O(\|\sigma \noise\|).
\end{align}
Substituting this expansion into~\eqref{eq:reparameterized-gradient-estimator-example}, we obtain
\begin{align}
    \reparameterizedEstimator_{a,s} \smoothedObjFunc_{W} &= \frac{1}{\popSize} \sum_{i=1}^{\popSize} W(f(x_i)) \left[ j(a, s) - j(s) \left( \pi_{\bar \theta}(a \mid s) + O(\|\sigma \noise\|) \right) \right] \notag \\
    &= \underbrace{\frac{1}{\popSize} \sum_{i=1}^{\popSize} W(f(x_i)) \left[ j(a, s) - j(s) \pi_{\bar \theta}(a \mid s) \right]}_{\text{independent of } \noise} + O(\|\sigma \noise\|).
\end{align}
The leading term is independent of $\noise$ and thus Rao-Blackwellized, while the remaining term is only $O(\|\sigma \noise\|)$.

\section{Numerical Experiments}
\label{sec:numerical-experiments}
In this section, we demonstrate the superiority of the phenotype gradient estimator and associated PAES to the conventional genotype gradient estimator and the associated ES by using three classes of problems.
We observe that the phenotype gradient estimator consistently outperforms the genotype estimator across all classes tested.

\subsection{Gaussian OIU}
\label{sec:gaussian-nio}
We first evaluate the performance of ES and PAES on the OIU with the Gaussian distribution problem introduced in Section~\ref{subsec:gaussian-search-and-sampling}.

\subsubsection{Setup}
The objective function $f$ is chosen from the BBOB test suite~\cite{hansen2021coco}.
We note that BBOB benchmarks are minimization problems and, in this section, we use PAES for minimization problems by transforming $f$ to $-f$.
The search space is $d=40$ dimensional, and the population size is $100$. We run each algorithm for up to $5000$ generations and repeat the experiment with 10 different random seeds.
The search space is $[-5, 5]^d$ for all functions.
The magnitude $\sigma$ of the input noise is set to $0.1$.
We initialize the population as $\normal{\theta}{0}{\frac{1}{2}\identityMatrix
  }$.
The population is represented as a Gaussian distribution $\normal{\cdot}{\bar \theta}{\Sigma}$, where both mean $\bar \theta$ and covariance matrix $\Sigma$ are updated.
We utilized the vanilla SGD with learning rate $0.01$ for both ES and PAES.
For the weighting function $W$, we use a rank-based linear function commonly used in CMA-ES~\cite{hansen2016cma}.
Let $\mathrm{rank}(x_i) \in \{1, \ldots, N\}$ be the rank of $x_i$ when the population is sorted by $F(x_i)$ in descending order.
Let $p \in (0,1]$ be a hyperparameter that specifies the fraction of the population to receive a nonzero weight, and define $\xi := \lfloor p N \rfloor$ as the number of selected individuals.
We set $p = 0.8$ (hence $\xi = 80$ for $N=100$) and define
\begin{align}
  W(F(x_i)) =
  \begin{cases}
    w_{\max} \cdot \dfrac{\xi - \mathrm{rank}(x_i)}{\xi- 1} & \mathrm{rank}(x_i) \leq \xi, \\
    0                                                        & \mathrm{rank}(x_i) > \xi,
  \end{cases}
\end{align}
where $w_{\max} = 0.1$.
For ES, we estimate the gradient via rank-$\mu$ CMA-ES~\cite{hansen2016cma} as
\begin{align}
  &\frac{\partial}{\partial \bar \theta} \smoothedObjFunc_W = \frac{1}{N} \sum_{i=1}^N W(F(x_i)) \left(\theta_i - \bar \theta\right),\\
  &\frac{\partial}{\partial \Sigma} \smoothedObjFunc_W = \frac{1}{N} \sum_{i=1}^N W(F(x_i)) \left((\theta_i - \bar \theta)(\theta_i - \bar \theta)^\top - \Sigma\right).
\end{align}
For PAES, we use the update rule~\eqref{eq:natural-full-gradient-estimator-for-gaussian-search-and-sampling}.

\subsubsection{Performance evaluation}
Figure~\ref{fig:bbob-all-functions} shows the mean difference between the final objective values and the optimum values for 10 runs of all 24 BBOB functions.
The figure indicates that PAES consistently achieves lower errors than ES.
Figure~\ref{fig:bbob-selected} shows the convergence curves for three representative cases.
A typical case is the sphere function (f1) where PAES converges faster than ES and achieves a lower objective value at the end of the optimization.
Another case is Schwefel x*sin(x) (f20), where PAES finds a significantly better solution than ES (see also Figure~\ref{fig:bbob-all-functions}).
This function has many local minima on a scale comparable to the input noise, and thus the genotypic information $F(\theta_i)$ is obscured by the noise.
As a result, ES was unable to find the optimal solution, while PAES found the optimal solution using phenotypic information $x_i$ directly.
The last case is the Katsuura function (f23), where both methods failed to minimize the objective function and plateaued at a similar level.
The function (f23) has many local minima arranged in a lattice pattern within the input noise scale.
The input noise averages such local structures and the objective function $F(\theta)$ becomes flat, making it difficult for both ES and PAES to minimize the objective function.

The convergence curves for all $24$ functions are provided in Appendix~\ref{sec:appendix-bbob}.
We also note that the computation times for 5000 generations for ES and PAES on all $24$ BBOB functions are comparable (Appendix~\ref{sec:appendix-bbob}).

\begin{figure}[t]
  \centering
  \includegraphics[width=\textwidth]{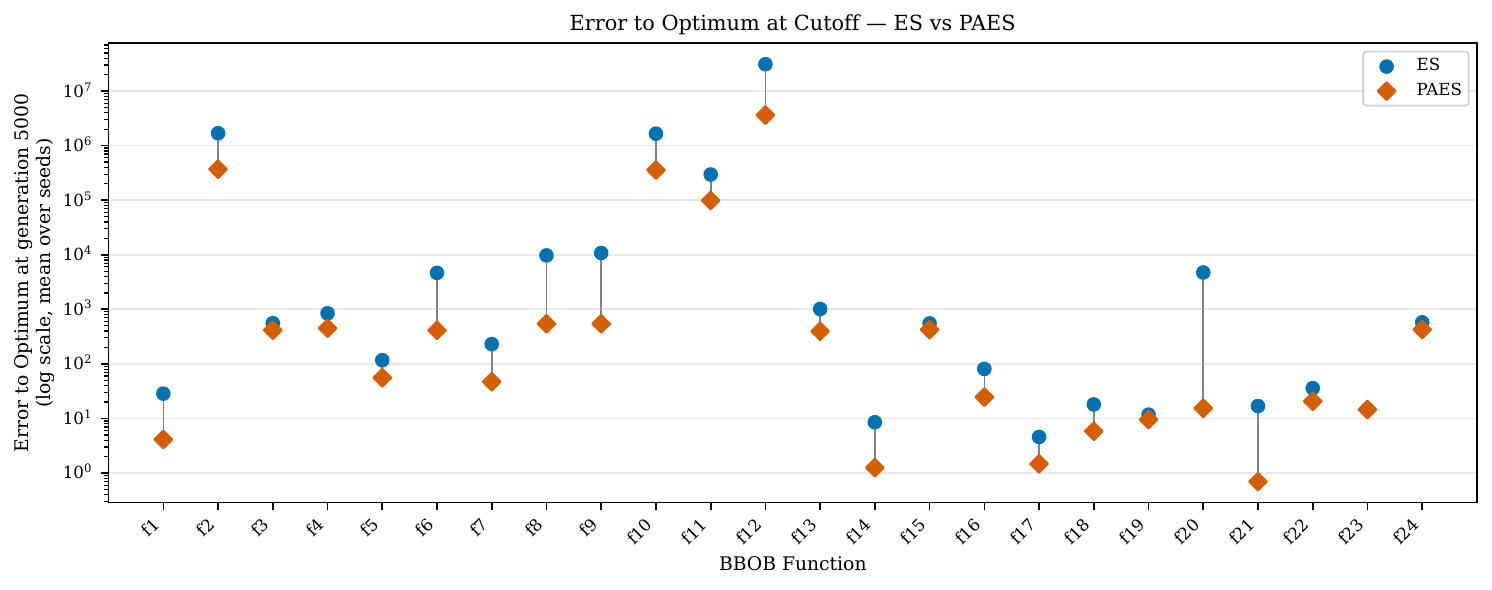}
  \caption{The difference of the final objective values to the optimum values on all 24 BBOB functions ($d=40$, population size $100$,
    $5000$ generations, $10$ seeds). The difference is averaged over 10 different random seeds.
    PAES (orange) outperforms ES (blue) on most functions.}
  \label{fig:bbob-all-functions}
\end{figure}

\begin{figure}[t]
  \centering
  \begin{minipage}[t]{0.32\textwidth}
    \centering
    \includegraphics[width=\textwidth]{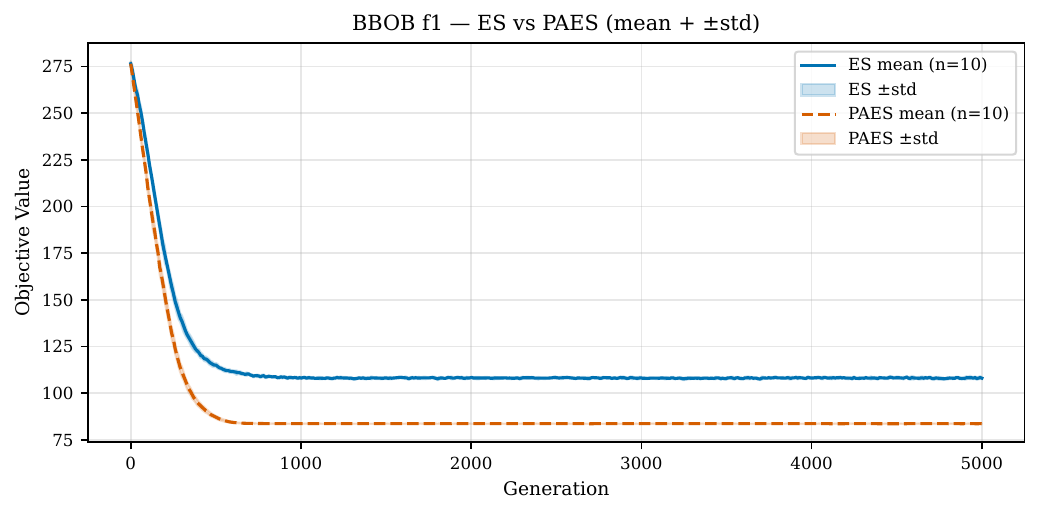}
    \begin{center}\small (a) f1 (Sphere): typical case.\end{center}
  \end{minipage}\hfill
  \begin{minipage}[t]{0.32\textwidth}
    \centering
    \includegraphics[width=\textwidth]{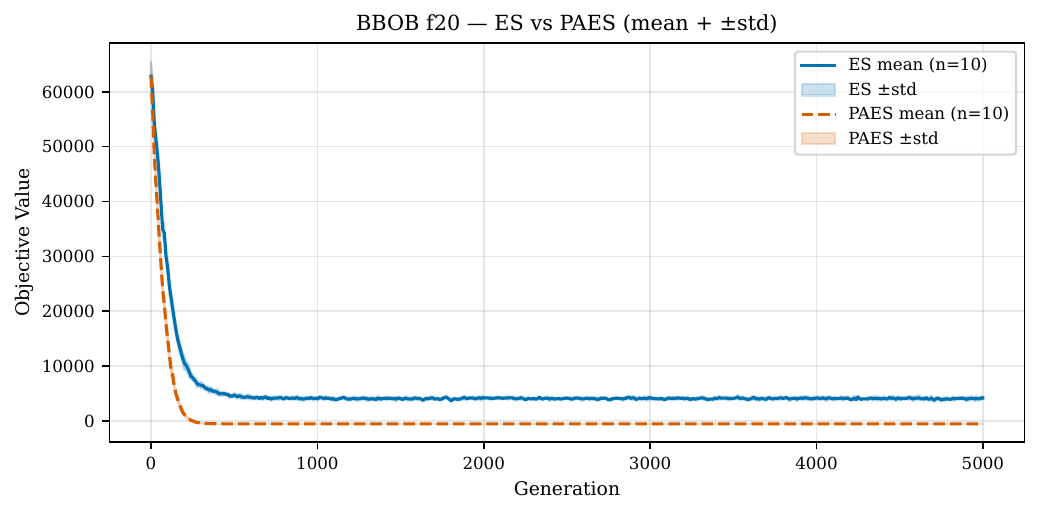}
    \begin{center}\small (b) f20 (Schwefel x*sin(x)): PAES outperforms ES.\end{center}
  \end{minipage}\hfill
  \begin{minipage}[t]{0.32\textwidth}
    \centering
    \includegraphics[width=\textwidth]{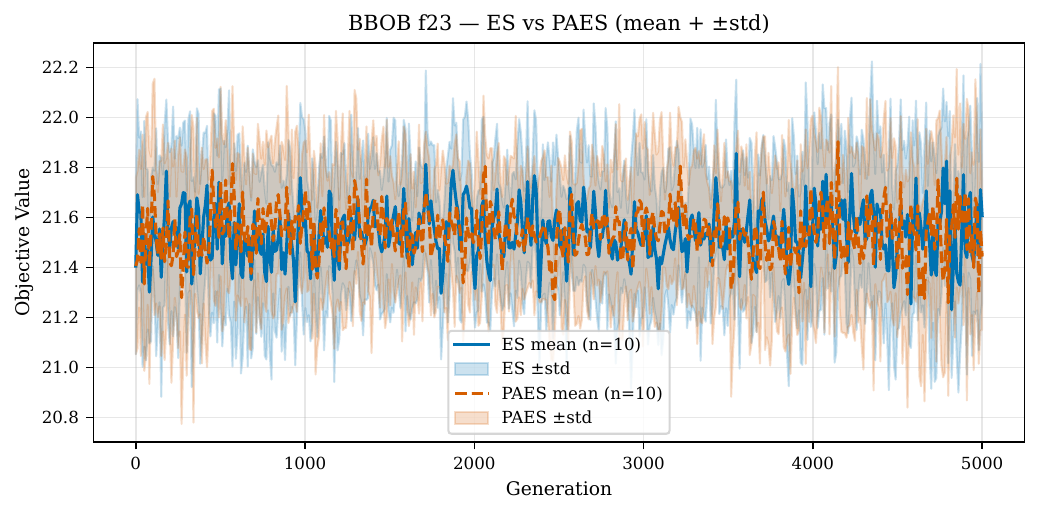}
    \begin{center}\small (c) f23 (Katsuura): both methods fail to minimize.\end{center}
  \end{minipage}
  \caption{Selected convergence curves (mean $\pm$ std, 10 seeds).
    PAES (orange) vs.\ ES (blue).}
  \label{fig:bbob-selected}
\end{figure}

\subsection{Reinforcement Learning (Exact Rao-Blackwellization)}
\label{sec:rl-exact-rao-blackwell}

Next, we evaluate the performance of PAES for an RL problem with discrete states and actions where the exact phenotype gradient estimator (~\eqref{eq:full-gradient-estimator-for-reinforcement-learning-with-discrete-state-and-action}) is used. 
\subsubsection{Setup}
We adopt the FrozenLake-v1 environment~\cite{towers2024gymnasium}.
The environment consists of $16$ states and $4$ actions (left/down/right/up) in a $4 \times 4$ grid world.
For ES, we parameterize the policy using a Q-table, each of which is initialized by a normal distribution $\mathcal{N}(0, 0.01)$.
For PAES, we parametrize the policy as a set of Dirichlet distributions, each of which corresponds to a state $x$ and whose parameter is initialized by $\alpha_s = (1.0, 1.0, 1.0, 1.0)$.
The population size is set to $100$, and we optimize the policy using vanilla SGD with a learning rate $0.01$ for both ES and PAES.
The weighting function $W$ is defined by using the cumulative reward $r_i$ of each policy in the population as follows:
\begin{align}
  W(r_i) = \frac{r_i - \bar{r}}{\hat{\sigma}_r},
  \label{eq:rl-frozenlake-weight}
\end{align}
where $\bar{r} = \frac{1}{N}\sum_{j=1}^N r_j$ and $\hat{\sigma}_r = \sqrt{\frac{1}{N}\sum_{j=1}^N (r_j - \bar{r})^2}$.

\subsubsection{Performance evaluation}

Since the performance of ES and PAES may depend on the hyperparameters $\sigma$ and $S = \sum_{a \in \actionSp} \alpha_{a,s}$ respectively,  we ran each algorithm with different random seeds and hyperparameters.
We measure the performance of the algorithm at each step by the cumulative reward of the updated policy, which is obtained by normalizing the parameters $\alpha_s$ of the Dirichlet distribution for PAES and directly from the Q-table for ES.
This evaluation of the reward is different from the evaluation of the cumulative rewards in the population in that we make the policy deterministic by taking the action with highest probability for each state.
Figure~\ref{fig:frozenlake-main} (a) shows the learning curves of each algorithm for the best hyperparameter setting, where the vertical axis shows the cumulative reward of the updated policy.
Here, we choose the best hyperparameter setting for each algorithm by the speed of convergence of the cumulative reward and stability of the learning curve (see Appendix~\ref{sec:appendix-rl-frozenlake-sweep} for details).
Although we run the experiment until $500$ generations, we show only the first $250$ generations for better visibility.
PAES converges to a high cumulative reward of the updated policy faster than ES.
Figure~\ref{fig:frozenlake-main} (b,c) shows the sensitivity of the cumulative reward of the updated policy at the last $50$ generations to the hyperparameters.
We observe that the performance of ES is better when the $\sigma$ is no less than $0.1$, which means that the variance of the population is large.
On the other hand, the performance of PAES is better when $S$ is smaller than $1$, which means that the variance of the population is large like ES.
The comparison of computation time is provided in Appendix~\ref{sec:appendix-rl}, where PAES converges faster than ES in terms of wall-clock time as well.

\begin{figure}[t]
  \centering
  \begin{minipage}[t]{0.32\textwidth}
    \centering
    \includegraphics[width=\textwidth]{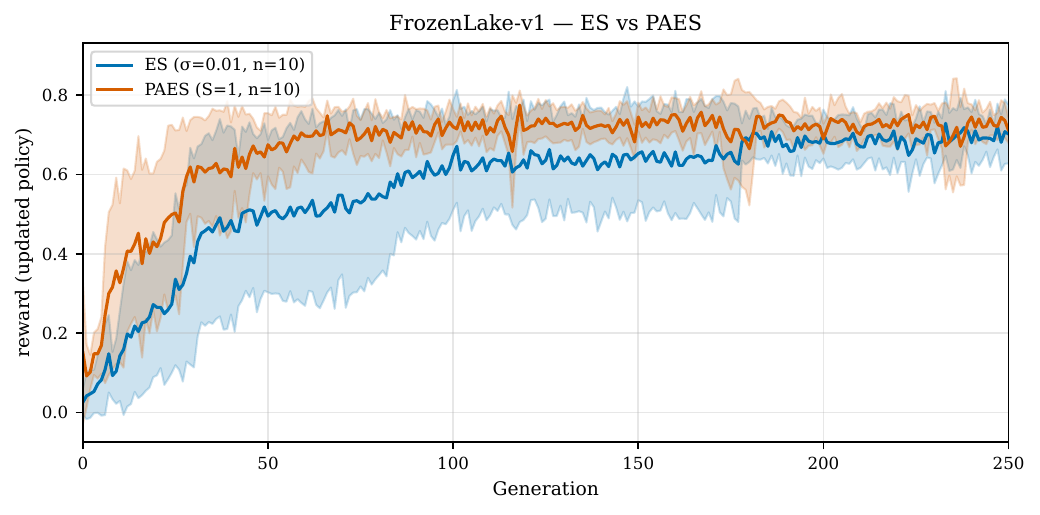}
    \begin{center}\small (a) Learning curves at the best setting.\end{center}
  \end{minipage}\hfill
  \begin{minipage}[t]{0.32\textwidth}
    \centering
    \includegraphics[width=\textwidth]{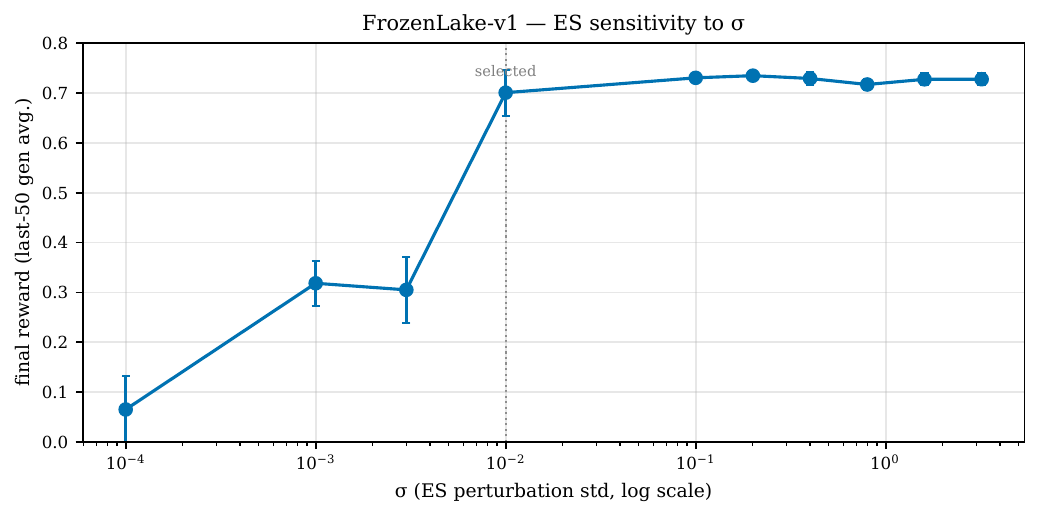}
    \begin{center}\small (b) ES: sensitivity to $\sigma$.\end{center}
  \end{minipage}\hfill
  \begin{minipage}[t]{0.32\textwidth}
    \centering
    \includegraphics[width=\textwidth]{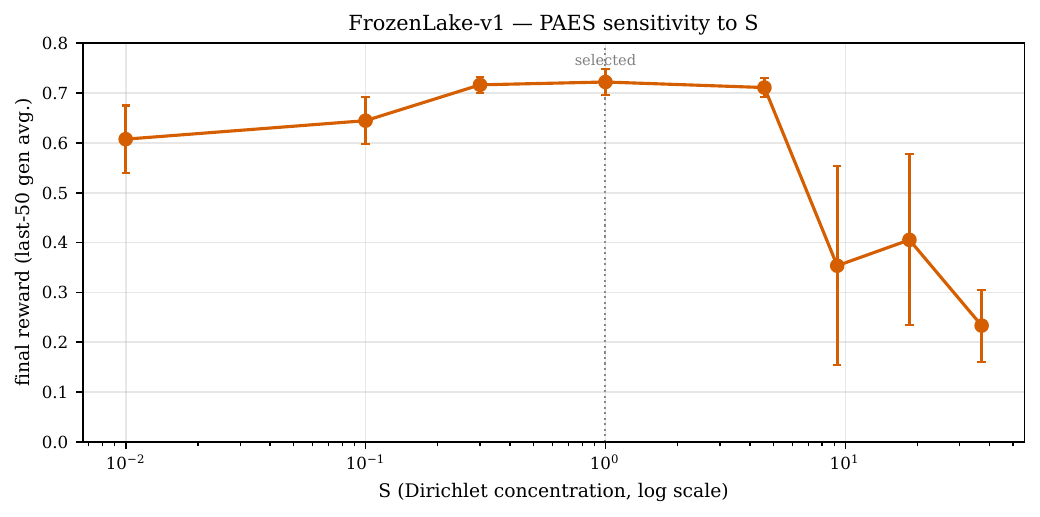}
    \begin{center}\small (c) PAES: sensitivity to $S$.\end{center}
  \end{minipage}
  \caption{(a) Learning curves of the updated policy for ES ($\sigma=0.01$) and PAES ($S=1$)
    on FrozenLake-v1, each averaged over 10 seeds (mean $\pm$ std).
    Both algorithms reach a comparable final reward, but PAES converges substantially faster.
    Although we run experiment until 500 generations, we show only the first 250 generations for better visibility.
    (b, c) The sensitivity of the final reward of the updated policy to hyperparameters $\sigma$ and $S$ for ES and PAES, respectively,
    We show the average of the last 50 generations (450-500 generation) as line plot with standard deviation as error bars.
    We observe that both algorithms have the saturation of reward when the variance of the population is large, i.e., $\sigma$ is no less than $0.1$ for ES and $S$ is smaller than $1$ for PAES.
    We choose $\sigma=0.01$ for ES and $S=1$ for PAES as the best hyperparameter setting for each algorithm by considering the final reward, convergence speed, and stability of the learning curve.
    }
  \label{fig:frozenlake-main}
\end{figure}

\subsection{Reinforcement Learning (Reparameterization Trick Approximation)}
\label{sec:rl-reparameterization}
We evaluate the performance of PAES for more complex RL tasks by applying the reparameterization trick approximation of the phenotype gradient estimator~\eqref{eq:reparameterized-gradient-estimator-example}.
In the following, we present learning curves in terms of generations.
The comparison of computation time is provided in Appendix~\ref{sec:appendix-rl}, where the learning curves are similar to those in the main text.

\subsubsection{CartPole}
We apply the approximated PAES with reparameterization trick to the CartPole-v1 environment~\cite{towers2024gymnasium}.
We note that the PAES with the exact Rao-Blackwellization ~\eqref{eq:full-gradient-estimator-for-reinforcement-learning-with-discrete-state-and-action} is not applicable to this environment because the state space is continuous.
The maximum step for each episode is set to $500$, which is equal to the maximum reward for each episode.
We introduce early stopping when the mean reward of the population reaches $475$.
The policy is represented by a three-layer neural network with $32$ hidden units with the $tanh$ activation function.
The final layer outputs the action probabilities by applying the softmax function.
The policy is initialized by the Xavier initialization~\cite{glorot2010understanding}.
We add Gaussian noise with standard deviation $\sigma = 0.1$ to the policy parameters.
The population size is set to $100$.
We optimize the policy by Adam with learning rate $0.001$, $\beta_1 = 0.9$, $\beta_2 = 0.999$ for both ES and PAES.
We define the weighting function $W$ by using the cumulative reward $r_i$ of each policy in the population as follows:
\begin{align}
  W(r_i) = \frac{\mathrm{rank}(r_i)}{N - 1} - \frac{1}{2},
  \label{eq:rl-cartpole-weight}
\end{align}
where $\mathrm{rank}(r_i) \in \{0, 1, \ldots, N{-}1\}$ is the zero-indexed rank of $r_i$ among $\{r_1, \ldots, r_N\}$.
This weighting function is commonly used in the application of ES to RL problems~\cite{salimans2017evolution}.
For ES, we use antithetic sampling, which is a common variance reduction technique for ES~\cite{salimans2017evolution}.
To apply the update rule of PAES~\eqref{eq:reparameterized-gradient-estimator-example}, we collect the empirical count $j(a,s)$ of each policy in the population and then compute the gradient of the original policy by back-propagating the gradient ~\eqref{eq:reparameterized-gradient-estimator-example} to the network.

Figure~\ref{fig:cartpole} shows the learning curves of ES and PAES for CartPole-v1.
PAES converges significantly faster than ES in terms of the number of generations, which demonstrates the effectiveness of PAES with the reparameterization trick in RL problems.

\subsubsection{Pendulum}
Next, we evaluate the performance of PAES for the Pendulum-v1 environment~\cite{towers2024gymnasium}, where both state and action spaces are continuous.
The policy is represented by a three-layer MLP with $64$ hidden units with the $tanh$ activation function, which directly outputs the continuous action.
When we generate a population in both ES and PAES, the policy parameters $\theta$ are perturbed by Gaussian noise with standard deviation $\sigma$.
Here, the value of $\sigma$ starts from $0.05$ and linearly decays to $0.01$ over $3000$ generations.
In addition, for PAES, we add Gaussian noise with standard deviation $\sigma$ to the action output of the policy.
This noise corresponds to the phenotypic noise in PAES and accelerates exploration in the action space.
For ES, we found that adding noise to the action worsens the performance (results not shown), and thus we show the result without adding noise to actions.
We set the threshold for early stopping to $-200$.
The other settings are the same as those in CartPole-v1.

Figure~\ref{fig:cartpole} (b) shows the learning curves of ES and PAES.
The vertical axis shows the deterministic reward of the updated policy, which is evaluated by not adding the action noise to the policy.
The result shows that PAES converges faster than ES in terms of generations.

\subsubsection{Swimmer}
Finally, we evaluate the performance of PAES in the Swimmer-v5 environment~\cite{towers2024gymnasium}, which is a more complex continuous control environment.
The setting is the same as those in Pendulum-v1, except for the following points.
The standard deviation $\sigma$ of the noise added to policies $theta$ and actions starts from $0.2$ and linearly decays to $0.05$ over $5000$ generations.
We added larger noise because, with smaller noise starting from $\sigma = 0.05$, both ES and PAES failed to find a successful policy within $5000$ generations for about half of the seeds.
This is reasonable because the Swimmer environment is known to have many local minima, and thus exploration is necessary to find the successful policy.
We balance exploration and stability after convergence by tuning the standard deviation $\sigma$ of the noise.

Figure~\ref{fig:cartpole} (c) shows the learning curves of ES and PAES for Swimmer-v5.
PAES converges faster than ES in terms of generations, demonstrating that phenotypic information can also accelerate learning in complex continuous control environments.

\begin{figure}[t]
  \centering
  \begin{minipage}[t]{0.32\textwidth}
    \centering
    \includegraphics[width=\textwidth]{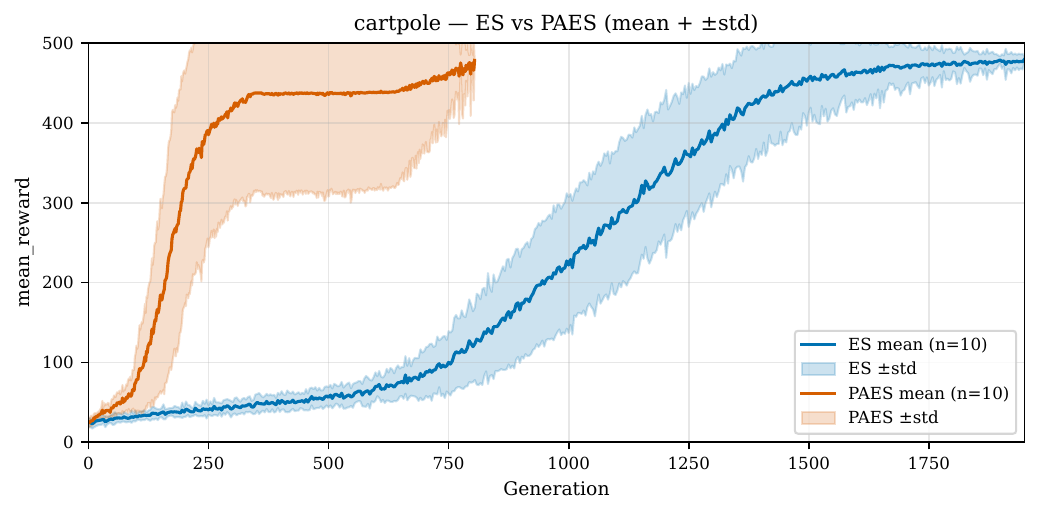}
    \begin{center}\small (a) CartPole-v1.\end{center}
  \end{minipage}\hfill
  \begin{minipage}[t]{0.32\textwidth}
    \centering
    \includegraphics[width=\textwidth]{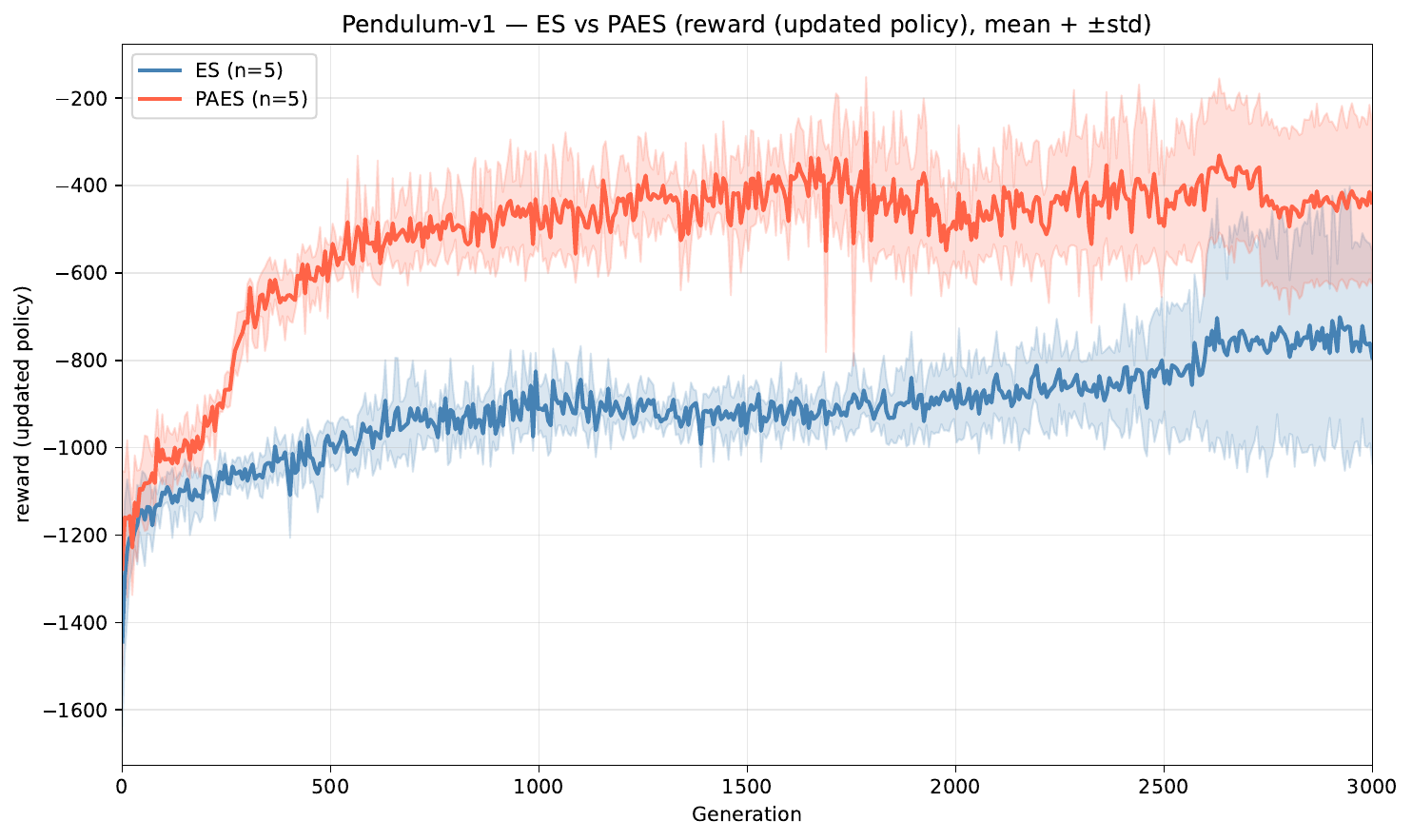}
    \begin{center}\small (b) Pendulum-v1.\end{center}
  \end{minipage}\hfill
  \begin{minipage}[t]{0.32\textwidth}
    \centering
    \includegraphics[width=\textwidth]{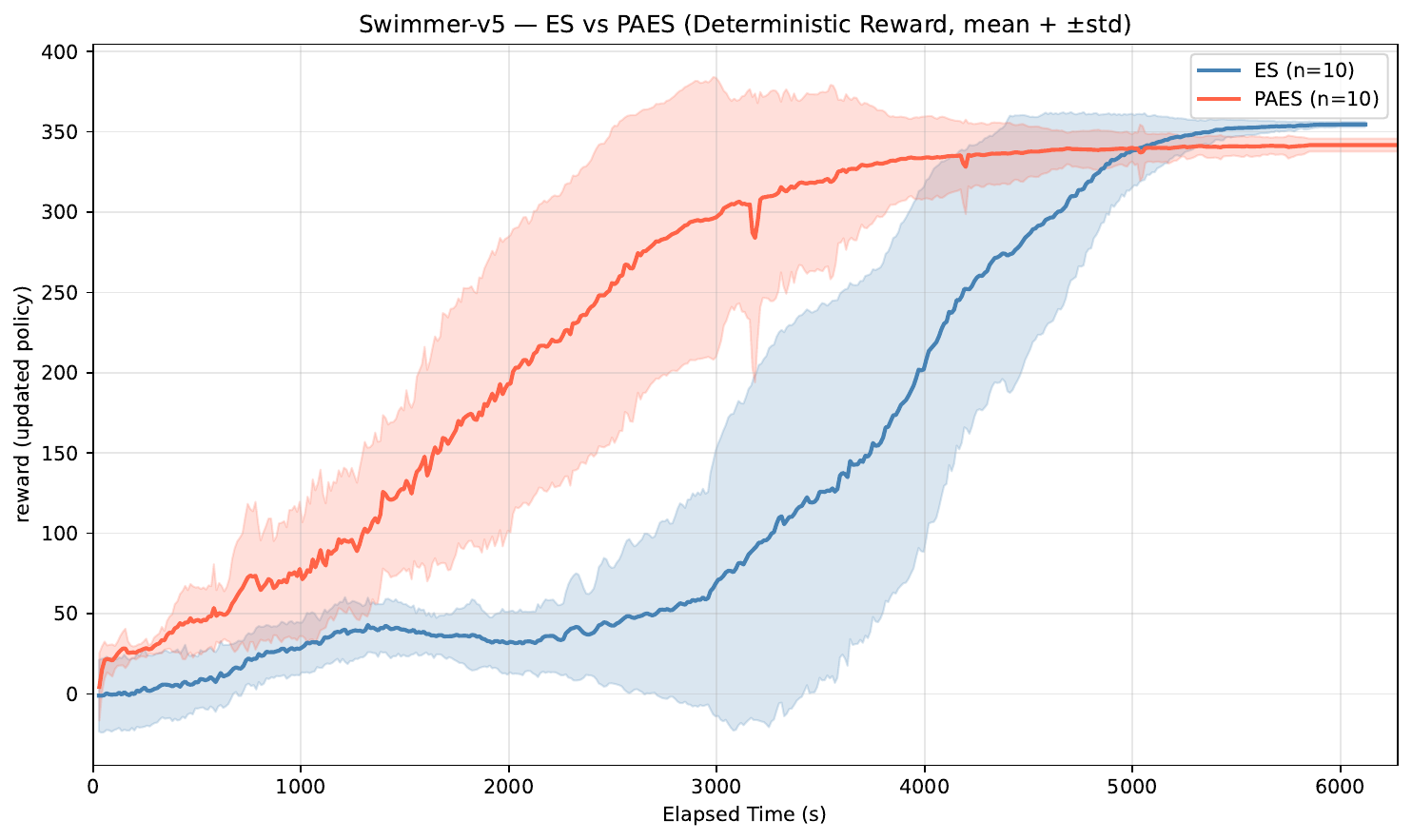}
    \begin{center}\small (c) Swimmer-v5.\end{center}
  \end{minipage}
  \caption{Learning curves of ES and PAES for continuous control environments
    (mean $\pm$ std over multiple seeds).
    PAES (orange) vs.\ ES (blue).
    See Appendix~\ref{sec:appendix-rl} for computation time comparison.}
  \label{fig:cartpole}
\end{figure}

\section{Conclusion}
\label{sec:conclusion}

In this paper, we have investigated the question of whether the realization of the input is useful to accelerate the ES in OIU problems.
We showed that the acceleration of the ES is achieved by PAES in which the phenotype gradient estimator utilizes the information of input realization, i.e., phenotype information. 
Then, we generalized this result by clarifying that the acceleration is achieved by Rao-Blackwellization, which reduces the variance of the gradient estimation by utilizing the realization of the input.
We also provide both analytical forms and an approximation of the phenotype gradient estimator in PAES to cover a variety of problems.
Numerical experiments demonstrated the superiority of PAES, which is consistent with our theoretical results.
PAES achieves faster convergence than ES from simple optimization to more complex RL benchmarks.
We found that the computation time of PAES is comparable to that of ES, which means that PAES is more efficient than ES even in terms of wall-clock time.
Furthermore, to obtain finer gradient estimators, PAES could be combined with several other techniques for RL, such as the baseline control~\cite{greensmith2001variance}.

A limitation of PAES is that it requires the input distribution to be in some well-behaved class to calculate the phenotype gradient analytically.
To apply PAES to a broader class of problems, we need to develop an approximation method for the phenotype gradient estimator.
A possible approach is to use the phenotypic information to imitate the input $x$ with a higher objective value in the population.
Indeed, the phenotype gradient estimator for the Gaussian OIU problem~\eqref{eq:natural-full-gradient-estimator-for-gaussian-search-and-sampling} can be interpreted as imitating the input $x$ with a higher objective value in the population.
In addition, the reparameterization trick~\eqref{eq:reparameterized-gradient-estimator-example} employed for RL can be regarded as imitating the action of agents with a higher reward in the population.
These results indicate that imitation of the ancestors with a higher objective value in the population can accelerate the ES, even though it may not necessarily be represented as Rao-Blackwellization.

The application of OIU has been expanding from classical examples of manufacturing to robotics and RL.
Another application of OIU would be Large Language Models, in which the stochastic output is guided by optimizing prompts and context information fed to LLM.
This expansion indicates possible broad applications of this paper.
Moreover, there are several solutions to OIU: ES, gradient-based methods, and Bayesian optimization.
Although we focused only on ES and its extension PAES, a similar idea may be applicable to other solutions.
For such directions, the observation that Rao-Blackwellization via the realization of the input resembles imitation learning might be useful to construct improved algorithms.

\section{Acknowledgment}
This research is supported by Toyota Konpon Research Institute,
Inc., JST CREST (JPMJCR25Q2) and
JSPS Grant-in-Aid for Transformative Research Areas (A) (25H01365).

\bibliography{main.bib}

\appendix

\section{Related Work to OIU and PAES}
\label{sec:related-work}
ES~\cite{rechenberg1973evolutionary} is applied to a broad class of problems, especially when the gradient of the objective function is not available.
ES represents the population as a probability distribution and aims to maximize the expected value of the objective function over population.
The expected value is maximized via the stochastic gradient ascent, where the gradient is estimated by using the samples of objective values in the population.
Variants of ES, such as CMA-ES~\cite{hansen2016cma,hensen2001completely}, Natural Evolution Strategy (NES)~\cite{wierstra2014natural}, and Information Geometric Optimization (IGO)~\cite{ollivier2017information} are widely applied to various problems and have high performance on benchmark functions~\cite{hansen2010comparing, rios2013derivativefree}.
In the context of black-box optimization, ES is also called zeroth-order optimization (ZOO), and its properties such as convergence rate and sample complexity have been studied~\cite{nesterov2017random,flaxman2005online,shamir2017optimal,duchi2015optimal,ye2025onepoint}.
In this paper, we improve ES for OIU by tapping into the information of realized input $x$.

Variance reduction techniques for gradient estimation have been studied in various contexts of machine learning.
One technique is Rao-Blackwellization, in which we can reduce the variance of estimator by taking conditional expectation with respect to irrelevant variable.
Rao-Blackwellization is applied to variational inference~\cite{salimans2013fixed, ranganath2014black,lam2026raoblackwellised}, the estimation of the gradient of discrete variables~\cite{yin2019arsm, liu2019raoblackwellised}, and the construction of action-dependent baselines for policy gradient methods~\cite{wu2018variance}.
Another technique is the reparameterization trick (also known as stochastic backpropagation~\cite{rezende2014stochastic}), which, for a compositional objective function, computes some part of the objective function analytically and then estimates the gradient of the other part by sampling.
The reparameterization trick is empirically shown to have a smaller variance~\cite{schulman2015gradient, mohamed2020monte} and
theoretical analysis is provided for some idealized cases~\cite{xu2019variance}.
The reparameterization trick is also applied to the variational auto encoder~\cite{kingma2014auto}, and variational inference~\cite{lam2026raoblackwellised}.

\section{Contextual Optimization under Input Uncertainty (COIU)}
\label{sec:COIU}
As an extension of OIU, we also define a \emph{Contextual Optimization under Input Uncertainty} (COIU).
At each trial, we are given a context $c \in \ContextSp$ which is drawn from some distribution $\nu(c)$.
For each $\theta \in \SearchSp$, we define a conditional probability distribution $\sampleDist{\theta}(x \mid c)$ on $\SampleSp$.
For a function $f \colon   \SampleSp \times \ContextSp \to \R$, the objective function is given by
\begin{align}
    \label{eq:objective-func-with-context}
    \baseObjFunc(\theta) := \E{\nu(c)}{ \E{\sampleDist{\theta}(x \mid c)}{f(x,c)}}.
\end{align}

\subsection{Examples of the Problem}
\label{subsubsec:example-problem2}

\subsubsection{Mixture of Experts}
\label{subsubsec:mixture-of-experts}
Let us consider a classification problem.
We are given a context $c$ that follows a distribution $\nu(c)$, and we need to output the corresponding label in a discrete set $\SampleSp$.
The loss function is $f(c,x)$.
One solution to this classification problem is MoE.
Assume that we have a set of experts $\phi = \{\phi_i\}_{i=1}^k$, each of which defines a distribution $\pi_{\phi_i}(x \mid c)$ over the output labels.
In MoE, we first choose an expert $i$ with probability $\alpha(i \mid c, \rho)$, where $\rho$ is a parameter.
Then, the chosen expert outputs a label with probability $\pi_{\phi_i}(x \mid c)$.
Our aim is to optimize each experts $\phi$ and the choice rule $\rho$ of experts to minimize the expected loss.
The expected loss when we choose the $i$-th expert is given by
\begin{align}
    \baseObjFunc(i) = \E{\nu(c)}{ \E{\pi_{\theta_i}(x \mid c) }{f(c,x)}}.
\end{align}
Therefore,  the optimization of $\phi_i$ is formulated in the framework of~\eqref{eq:objective-func-with-context}.
The expected loss when we choose exert stochastically from $\alpha(i \mid c, \rho)$ is
\begin{align}
    \label{eq:expected-loss-moe-full}
    \baseObjFunc(\phi, \rho) = \E{\nu(c)}{ \E{\pi_{\theta_i}(x \mid c) \alpha(i \mid c, \rho)}{f(c,x)}}.
\end{align}
In Section~\ref{subsec:moe-paes}, we will solve MoE by ES and see that $\alpha(i \mid c, \rho)$ corresponds to a population distribution of ES and the expected loss to $\smoothedObjFunc$ in~\eqref{eq:def-smoothe-obj}.

\subsubsection{Classification with Bandit Feedback}
\label{subsubsec:classification-with-bandit-feedback}
We consider a classification problem with bandit feedback~\cite{kakade2008efficient}.
The classification problem is the same setting as the previous subsection.
We note that this setting is different from the usual classification problems in that we do not know the correct class for $c$.
Suppose that we have a stochastic classification algorithm $\pi_{\theta}(x \mid c)$, which is a conditional probability distribution on $\SampleSp$ given $c$.
The expected loss is given by
\begin{align}
    \baseObjFunc(\theta) = \E{\nu(c)}{ \E{\pi_{\theta}(x \mid c)}{f(c, x)}}.
\end{align}
By setting
\begin{align}
    \sampleDist{\theta}(x) = \pi_{\theta}(x \mid c),
\end{align}
the expected value of the feedback is formulated in the framework of COIU~\eqref{eq:objective-func-with-context}.

\subsection{Phenotype gradient estimators for COIU}
We derive phenotype gradient estimators for COI in thie section for the two examples we introduced above.
\subsubsection{Mixture of Experts}
\label{subsec:moe-paes}
We consider the example in Section~\ref{subsubsec:mixture-of-experts}.
Let us denote the event that the $i$-th expert with parameter $\phi_i$ is chosen by $\theta_{i,\phi_i}$.
Also, let $\psi = (\phi, \rho)$.
We also define
\begin{align}
    \searchDist{\psi}(\theta_{i,\phi_i} \mid c) = \delta_{\phi_i}(\theta_{i,\phi_i}) \alpha(i \mid c, \rho),\\
    \sampleDist{\theta_{i, \phi_i}}(x \mid c) = \pi_{\phi_i}(x \mid c), 
\end{align}
where $\delta$ is the delta function.
Under this setting, the expected loss~\eqref{eq:expected-loss-moe-full} can be written as
\begin{align}
    F(\phi, \rho) = \E{\nu(c)}{
        \E{\searchDist{\psi}(\theta_{i,\phi_i})}{
            \E{\sampleDist{\theta_{i,\phi_i}(x \mid c)}}{f(x,c)}
        }
    } = \smoothedObjFunc(\psi).
\end{align}
Therefore, MoE is formulated as smoothed objective function of COIU (cf.~\eqref{eq:smoothe-obj-oiu}).

From this observation, we can calculate the phenotype gradient estimator for MoE.
We know that
\begin{align}
    \forward(x,  \theta_{i,\phi_i} \mid c, \psi) = \pi_{\phi_i}(x \mid c) \alpha(i \mid c, \rho),\\
    \forward(x \mid c, \psi) = \sum_{i=1}^k \pi_{\phi_i}(x \mid c) \alpha(i \mid c, \rho).
\end{align}
We also know that
\begin{align}
    \forward(\theta_{i,\phi_i} \mid c, x, \psi) = \frac{\pi_{\phi_i}(x \mid c) \alpha(i \mid c, \rho)}{\sum_{i=1}^k \pi_{\phi_i}(x \mid c) \alpha(i \mid c, \rho)}.
\end{align}
Therefore, the phenotype gradient estimator is given by
\begin{align}
    \fullGradientGeneralArg{\rho} \smoothedObjFunc_{W}
    &= \frac{1}{\popSize}\sum_{i=1}^{\popSize} W(f(x_i)) \frac{\partial \log \forward(x \mid c, \psi)}{\partial \rho}\\
    &= \frac{1}{\popSize}\sum_{i=1}^{\popSize} W(f(x_i)) \left[
        \frac{
            \pi_{\phi_i}(x \mid c) \frac{\partial}{\partial \rho} \alpha(i \mid c, \rho)
        }{
            \sum_{i=1}^k \pi_{\theta_i}(x \mid c) \alpha(i \mid c, \rho)
        }
    \right]\\
    &= \frac{1}{\popSize}\sum_{i=1}^{\popSize} W(f(x_i)) \forward(i \mid c, x, \rho) \frac{\partial}{\partial \rho} \log \alpha(i \mid c, \rho).
    \label{eq:moe-rho-paes}
\end{align}
Also, we have
\begin{align}
    \fullGradientGeneralArg{\phi_i} \smoothedObjFunc_{W}
    &= \frac{1}{\popSize}\sum_{i=1}^{\popSize} W(f(x_i)) \frac{\partial \log \forward(x \mid c, \psi)}{\partial \phi_i}\\
    &= \frac{1}{\popSize}\sum_{i=1}^{\popSize} W(f(x_i)) \left[ \frac{\alpha(i \mid c_i, \rho) \frac{\partial}{\partial \phi_i}  \pi_{\phi_i}(x_i \mid c_i)}{\sum_{i=1}^k \pi_{\theta_i}(x_i \mid c_i) \alpha(i \mid c_i, \rho)} \right]\\
    &=\frac{1}{\popSize}\sum_{i=1}^{\popSize} W(f(x_i)) \forward(i \mid c_i, x_i, \rho) \frac{\partial}{\partial \phi_i} \log \pi_{\phi_i}(x_i \mid c_i).
    \label{eq:moe-theta-paes}
\end{align}
These equations resembles the update rule of EM algorithm for MoE~\cite{jordan1993hierarchical} in that it utilizes a posterior distribution $\forward(i \mid c, x, \psi)$ to update the parameter $\psi$.

\subsubsection{Classification with Bandit Feedback}
\label{subsec:example-classification-with-bandit-feedback}
We next consider the example in Section~\ref{subsubsec:classification-with-bandit-feedback}.
We take the genotype $\theta$ to be the classification probability itself.
Also, we take a conjugate prior $\searchDist{\psi}$ to the multinomial distribution $\pi(\cdot \mid c)$ for each $c \in \ContextSp$ as
\begin{align}
    &\searchDist{\psi}(\theta \mid c) = \mathrm{Dir}(\pi(\cdot \mid c) \mid (\alpha_{x,c})_{x \in \SampleSp}).
\end{align}
Under this setting, the phenotype gradient estimator is given by
\begin{align}
    \label{eq:full-gradient-estimator-for-classification-with-bandit-feedback}
    \fullGradientGeneralArg{\alpha_{x,c}} \smoothedObjFunc_{W} = \frac{1}{\popSize}\sum_{i=1}^{\popSize}  \delta_{c, c_i} \delta_{x, x_i} W(f(x_i)) \left[ \frac{1}{\alpha_{x,c}} - \frac{1}{\sum_{x' \in \SampleSp} \alpha_{x',c}} \right].
\end{align}
The calculation is given in Section~\ref{subsec:example-classification-with-bandit-feedback-proof}.

\section{Proofs}
\subsection{Proofs in Section~\ref{sec:algorithm}}
\label{subsec:algorithm-proof}

\begin{proof}[Proof of Theroem~\ref{thm:full-gradient-estimator} and Theorem~\ref{thm:generalized-unbiasedness-of-rao-blackwellized-estimator}]
We prove Theorem~\ref{thm:generalized-unbiasedness-of-rao-blackwellized-estimator} which includes Theorem~\ref{thm:full-gradient-estimator} as a special case.
We first prove~\eqref{eq:generalized-unbiasedness-of-rao-blackwellized-estimator}.
From the definition of the Rao-Blackwellized estimator, we have
\begin{align}
    \E{\forward}{\RaoBlackwellize{\estimatorChar}{\{\suptime{x_i}{t}\}_{i \in [\popSize]}}}
    &= \E{
            \forward[\{\suptime{x_i}{t}\}_{i \in [\popSize]}]
        }{
            \E{\forward[\{ \suptime{\theta_i}{t} \}_{i \in [\popSize]} \mid \{\suptime{x_i}{t}\}_{i \in [\popSize]}]}{\estimator{\{\suptime{\theta_i}{t}, \suptime{x_i}{t}\}_{i \in [\popSize]}}}
        } \\
    &= \E{\forward}{\estimator{\{\suptime{\theta_i}{t}, \suptime{x_i}{t}\}_{i \in [\popSize]}}}.
\end{align}
By the assumption, the right hand side is equal to the gradient of the smoothed objective function.

We next prove~\eqref{eq:generalized-variance-reduction-of-rao-blackwellized-estimator}.
For notational simplicity, we omit the subscript $i \in [\popSize]$ and the superscript $t$ indicating the generation in the following equations.
Let us define
\begin{align}
    u := \E{\forward}{\estimator{\{\theta_i, x_i\}}} = \E{\forward}{\RaoBlackwellize{\estimatorChar}{\{x_i\}}}.
\end{align}
By direct calculation, we have
\begin{align}
    &\V{\forward}{\estimator{\{\theta_i, x_i\}}}\\
    &= \E{\forward}{\left(\estimator{\{\theta_i, x_i\}} - u\right)^2} \\
    &= \E{\forward}{\left[\left(\estimator{\{\theta_i, x_i\}} - \RaoBlackwellize{\estimatorChar}{\{x_i\}}\right) + \left(\RaoBlackwellize{\estimatorChar}{\{x_i\}} - u\right)\right]^2} \\
    &= \E{\forward}{\left(\estimator{\{\theta_i, x_i\}} - \RaoBlackwellize{\estimatorChar}{\{x_i\}}\right)^2} \\
        &\quad + 2 \E{\forward}{\left(\estimator{\{\theta_i, x_i\}} - \RaoBlackwellize{\estimatorChar}{\{x_i\}}\right) \left(\RaoBlackwellize{\estimatorChar}{\{x_i\}} - u\right)}\\
        &\quad + \E{\forward}{\left(\RaoBlackwellize{\estimatorChar}{\{x_i\}} - u\right)^2} \\
        &= \E{\forward}{\left(\estimator{\{\theta_i, x_i\}} - \RaoBlackwellize{\estimatorChar}{\{x_i\}}\right)^2} + \V{\forward}{\RaoBlackwellize{\estimatorChar}{\{x_i\}}} \\
            &\quad + 2 \E{\forward}{\left(\estimator{\{\theta_i, x_i\}} - \RaoBlackwellize{\estimatorChar}{\{x_i\}}\right) \left(\RaoBlackwellize{\estimatorChar}{\{x_i\}} - u\right)}.
\end{align}
Since the first term is non-negative, it suffices to show that the last term is zero.
This is shown by the following calculation:
\begin{align}
    &\E{\forward}{\left(\estimator{\{\theta_i, x_i\}} - \RaoBlackwellize{\estimatorChar}{\{x_i\}}\right) \left(\RaoBlackwellize{\estimatorChar}{\{x_i\}} - u\right)}\\
    &= \E{\forward[\{x_i\}]}{
            \E{\forward[\{ \theta_i \} \mid \{x_i\}]}{
                \left(\estimator{\{\theta_i, x_i\}} - \RaoBlackwellize{\estimatorChar}{\{x_i\}}\right) \left(\RaoBlackwellize{\estimatorChar}{\{x_i\}} - u\right)
            }
        } \\
    &= \E{\forward[\{\suptime{x_i}{t}\}]}{
            \E{\forward[\{\suptime{\theta_i}{t}\} \mid \{\suptime{x_i}{t}\}]}{
                \left(\estimator{\{\suptime{\theta_i}{t}, \suptime{x_i}{t}\}} - \RaoBlackwellize{\estimatorChar}{\{\suptime{x_i}{t}\}}\right)
            }
            \left(\RaoBlackwellize{\estimatorChar}{\{\suptime{x_i}{t}\}} - u\right)
        } \\
    &= \E{\forward[\{\suptime{x_i}{t}\}]}{
            \left(\RaoBlackwellize{\estimatorChar}{\{\suptime{x_i}{t}\}} - \RaoBlackwellize{\estimatorChar}{\{\suptime{x_i}{t}\}}\right)
            \left(\RaoBlackwellize{\estimatorChar}{\{\suptime{x_i}{t}\}} - u\right)
        } \\
    &= \E{\forward[\{\suptime{x_i}{t}\}]}{
            0 \cdot
            \left(\RaoBlackwellize{\estimatorChar}{\{\suptime{x_i}{t}\}} - u\right)
        } \\
    &= 0.
\end{align}
\end{proof}

For the proof of Theorem~\ref{thm:rao-blackwellized-genotype-gradient-estimator}, we need the following lemma.
\begin{lemma}[Fisher's identity~\cite{cappe2005inference}]
    \label{lem:gen-full-log-derivative}
    \begin{align}
        \E{\forward(\theta \mid x)}{\frac{\partial \log \searchDist{\psi}(\theta)}{\partial \psi}} = \frac{\partial \log \forward(x)}{\partial \psi}.
    \end{align}
\end{lemma}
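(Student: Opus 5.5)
The plan is to use Bayes' rule for the posterior together with the fact that the score of $\searchDist{\psi}$ is the derivative of $\log\searchDist{\psi}$. Since $\forward(x,\theta)=\sampleDist{\theta}(x)\searchDist{\psi}(\theta)$ and $\sampleDist{\theta}(x)$ does not depend on $\psi$, we have
\begin{align}
    \frac{\partial \log \searchDist{\psi}(\theta)}{\partial \psi} = \frac{\partial \log \forward(x,\theta)}{\partial \psi}.
\end{align}
So the left-hand side equals the posterior expectation of the joint score. Using $\forward(\theta \mid x)=\forward(x,\theta)/\forward(x)$, I will write
\begin{align}
    \E{\forward(\theta \mid x)}{\frac{\partial \log \forward(x,\theta)}{\partial \psi}}
    = \int \frac{\forward(x,\theta)}{\forward(x)} \frac{1}{\forward(x,\theta)}\frac{\partial \forward(x,\theta)}{\partial \psi} \dd\theta
    = \frac{1}{\forward(x)} \frac{\partial}{\partial \psi}\int \forward(x,\theta)\dd\theta .
\end{align}
Since $\int \forward(x,\theta)\dd\theta = \forward(x)$, the right-hand side is $\partial_\psi \forward(x)/\forward(x)=\partial_\psi \log\forward(x)$, which is the claim.

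The only nonformal step is interchanging $\partial/\partial\psi$ with the $\theta$-integral. Here I would impose the same standing regularity already used tacitly in the proof of Theorem~\ref{thm:full-gradient-estimator}, namely differentiation under the integral sign. Concretely, this requires a dominating integrable bound on $\sampleDist{\theta}(x)\,|\partial_\psi \searchDist{\psi}(\theta)|$ locally uniformly in $\psi$, together with $\forward(x)>0$.

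One degenerate case needs separate treatment. If $\searchDist{\psi}$ is a delta distribution, or has $\psi$-dependent discrete atoms as in the mixture-of-experts example, the integral becomes a sum over atoms and the computation goes through verbatim. A genuinely singular $\searchDist{\psi}=\delta_\theta$ with $\psi=\theta$ has no score, so it lies outside the lemma's scope.
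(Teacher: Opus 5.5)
Your proof is correct and is essentially the paper's argument: the paper also writes the posterior as $\searchDist{\psi}(\theta)\sampleDist{\theta}(x)/\forward(x)$, cancels the factor $\searchDist{\psi}(\theta)$ against the denominator of the score, interchanges $\partial/\partial\psi$ with the $\theta$-integral, and uses $\int \searchDist{\psi}(\theta)\sampleDist{\theta}(x)\,\dd\theta = \forward(x)$. Your rewriting via the joint score $\partial \log \forward(x,\theta)/\partial\psi$ is only cosmetic, and your explicit domination hypothesis states precisely the interchange of derivative and integral that the paper performs tacitly.

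One caveat concerns your aside on degenerate cases. In the mixture-of-experts example the atoms $(i,\phi_i)$ move with $\phi$, so the $\phi$-component is not a verbatim instance of the lemma, because the $\phi$-score of $\searchDist{\psi}$ is undefined there. What works instead is the joint-score form with latent index $i$ and $\phi$ entering the likelihood $\pi_{\phi_i}$; this does not affect the lemma in the paper's main setting.
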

\begin{proof}
\begin{align}
    \E{\forward(\theta \mid x)}{\frac{\partial \log \searchDist{\psi}(\theta)}{\partial \psi}}
    &= \int
            \frac{\frac{\partial  \searchDist{\psi}(\theta)}{\partial \psi}}{\searchDist{\psi}(\theta)}
            \forward(\theta \mid x)
        \dd \theta \\
    &= \int
    \frac{\frac{\partial  \searchDist{\psi}(\theta)}{\partial \psi}}{\searchDist{\psi}(\theta)}
    \frac{\searchDist{\psi}(\theta) \sampleDist{\theta}(x)}{\forward(x)}
    \dd \theta \\
    &= \int
    \frac{\partial  \searchDist{\psi}(\theta)}{\partial \psi}
    \frac{ \sampleDist{\theta}(x)}{\forward(x)}\\
    &= \frac{1}{\forward(x)} \frac{\partial}{\partial \psi} \int \searchDist{\psi}(\theta) \sampleDist{\theta}(x) \dd \theta \\
    &= \frac{1}{\forward(x)} \frac{\partial}{\partial \psi} \forward(x) \\
    &= \frac{\partial \log \forward(x)}{\partial \psi}.
\end{align}
\end{proof}

\begin{proof}[Proof of Theorem~\ref{thm:rao-blackwellized-genotype-gradient-estimator}]
By Lemma~\ref{lem:gen-full-log-derivative}, we have
\begin{align}
    &\RaoBlackwellizeNoArgs{\genotypeEstimator \smoothedObjFunc_{W}}\notag\\
    &= \E{\forward(\theta \mid x)}{\genotypeEstimator \smoothedObjFunc_{W}}\notag\\
    &=  \E{\forward(\theta_i \mid x_i)}{\frac{1}{\popSize} \sum_{i=1}^{\popSize} W(f(x_i)) \frac{\partial \log \searchDist{\psi}(\theta_i)}{\partial \psi}}  \notag\\
    &= \frac{1}{\popSize} \sum_{i=1}^{\popSize} W(f(x_i)) \E{\forward(\theta_i \mid x_i)}{ \frac{\partial \log \searchDist{\psi}(\theta_i)}{\partial \psi}}  \notag\\
    &= \frac{1}{\popSize} \sum_{i=1}^{\popSize} W(f(x_i)) \frac{\partial \log \forward(x_i)}{\partial \psi}\\
    &= \fullGradientEstimator \smoothedObjFunc_{W}.
    \end{align}
\end{proof}

\subsection{Proofs in Section~\ref{sec:analytical-form-of-full-gradient-estimator}}
\label{subsec:analytical-proof}

We prove~\eqref{eq:analytical-form-of-full-gradient-estimator-for-exponential-family}.
  By direct calculation, we have
\begin{align}
    \forward(x) &=  
        \baseMeasureExpFamilyChar(x) \int \exp\left[\left( \sufficientStatistic{x} + \nu \phi \right)\cdot \theta - (\nu + 1) A(\theta) - B(\phi, \nu)\right]
        \dd \theta\\
    &= \baseMeasureExpFamilyChar(x) \int \exp\left[\nu' \phi' \cdot \theta - \nu' A(\theta) -  B(\phi, \nu)\right] \dd \theta.
\end{align}
By using the definition of the normalization constant $B(\phi, \nu)$, we have
\begin{align}
    B(\phi', \nu') = \log \int \exp\left[\nu' \phi' \cdot \theta - \nu' A(\theta)\right] \dd \theta.
\end{align}
Therefore, we have
\begin{align}
    \forward(x) &=  \baseMeasureExpFamilyChar(x) \exp\left[B(\phi', \nu')  - B(\phi, \nu)\right],
\end{align}
By using this result, we have
\begin{align}
    \frac{\partial \log \forward(x)}{\partial \psi} 
    &= \frac{\partial}{\partial \psi} \left[ B(\phi', \nu') - B(\phi, \nu)\right].
\end{align}
By using the following property of the normalization constant $B(\phi, \nu)$~\cite{amari2016information},
\begin{align}
    \frac{\partial B(\phi, \nu)}{\partial \phi} = \E{\searchDist{\psi}(\theta)}{\nu \theta},\\
    \frac{\partial B(\phi, \nu)}{\partial \nu} = \E{\searchDist{\psi}(\theta)}{\phi \cdot \theta - A(\theta)},
\end{align}
we have~\eqref{eq:analytical-form-of-full-gradient-estimator-for-exponential-family}.

\subsection{Proofs in Section~\ref{sec:example-of-paes}}
\label{subsec:example-of-paes-proof}
\subsubsection{Proof of Subsection~\ref{subsec:gaussian-search-and-sampling}}
\label{subsec:gaussian-search-and-sampling-proof}
We prove Equation~\eqref{eq:natural-full-gradient-estimator-for-gaussian-search-and-sampling}.
As shown in~\cite{ollivier2017information, akimoto2010bidirectional, glasmachers2010exponential}, we have
\begin{align}
    &\FisherInfo{\forward(x)}^{-1} \frac{\partial \log \forward(x)}{\partial \bar \theta} =  (x - \bar \theta),\\
    &\FisherInfo{\forward(x)}^{-1} \frac{\partial \log \forward(x)}{\partial ( \Sigma + \sigma^2 \identityMatrix )} = (x - \bar \theta)(x - \bar \theta)^{\transpose} - \Sigma - \sigma^2 \identityMatrix.
\end{align}
They also showed that the Fisher information matrix $\FisherInfo{\forward(x)}$ is block-diagonal for $\bar \theta$ and $\Sigma + \sigma^2 \identityMatrix$.
Since $\frac{\partial (\Sigma + \sigma^2 \identityMatrix)}{\partial \Sigma}$ is identity, the chain rule implies that the Fisher information matrix and the above partial derivatives are invariant when we change the coordinate system from $\Sigma + \sigma^2 \identityMatrix$ to $\Sigma$.
Thus, we have~\eqref{eq:natural-full-gradient-estimator-for-gaussian-search-and-sampling}.

\subsubsection{Proof of Subsection~\ref{subsec:example-classification-with-bandit-feedback}}
\label{subsec:example-classification-with-bandit-feedback-proof}
We prove Equation~\eqref{eq:full-gradient-estimator-for-classification-with-bandit-feedback}.
For notational simplicity, we define 
\begin{align}
    \mathcal{B}(c, \alpha) = \mathcal{B}(\{\alpha_{x,c}\}_{x \in \SampleSp}).
\end{align}

By direct calculation, we have
\begin{align}
    \forward(x \mid c) &= \int \searchDist{\psi}(\theta \mid c) \sampleDist{\theta}(x \mid c) \dd \theta\\
    &= \int \frac{1}{\mathcal{B}(c, \alpha)} \left( \prod_{x' \in \SampleSp} \pi(x' \mid c)^{\alpha_{x',c} - 1} \right) \pi(x \mid c) \dd \pi(\cdot \mid c)\\
    &= \frac{1}{\mathcal{B}(c,\alpha)} \int \prod_{x' \in \SampleSp} \pi(x' \mid c)^{\alpha_{x',c} + \delta_{x, x'} - 1}  \dd \pi(\cdot \mid c).
\end{align}
By using the normalization condition
\begin{align}
    \mathcal{B}(c, \alpha) = \int \prod_{x' \in \SampleSp} \pi(x' \mid c)^{\alpha_{x',c} - 1} \dd \pi(\cdot \mid c),
\end{align}
we can calculate the integration analytically and we have
\begin{align}
    \label{eq:forward-for-classification-with-bandit-feedback}
    \forward(x \mid c) = \frac{\mathcal{B}(c, \alpha')}{\mathcal{B}(c, \alpha)},
\end{align}
where $\alpha' = \{\alpha_{x',c} + \delta_{x, x'}\}_{x' \in \SampleSp}$.
By inserting this result into~\eqref{eq:full-gradient-estimator}, we have
\begin{align}
    &\fullGradientGeneralArg{\alpha_{x,c}} \smoothedObjFunc_{W} \\
    &= \frac{1}{\popSize}\sum_{i=1}^{\popSize}  W(f(x_i)) \left( \diGamma(\alpha_{x,c} + \delta_{c, c_i} \delta_{x, x_i}) - \diGamma(\alpha_{x,c}) + \diGamma(\sum_{x' \in \SampleSp} \alpha_{x',c}) - \diGamma(\sum_{x' \in \SampleSp} \alpha_{x',c} + \delta_{c, c_i} \delta_{x, x_i}) \right),
\end{align}
where $\diGamma$ is the digamma function defined by
\begin{align}
    \diGamma(x) = \frac{\Gamma'(x)}{\Gamma(x)}.
\end{align}
A property of the digamma function is that $\diGamma(x+1) = \diGamma(x) + \frac{1}{x}$. 
By using this property, we have~\eqref{eq:full-gradient-estimator-for-classification-with-bandit-feedback}.

\subsubsection{Proof of Subsection~\ref{subsec:example-reinforcement-learning}}
\label{subsec:example-reinforcement-learning-proof}
We prove Equation~\eqref{eq:full-gradient-estimator-for-reinforcement-learning-with-discrete-state-and-action}.
By a similar argument to~\eqref{eq:forward-for-classification-with-bandit-feedback}, we have
\begin{align}
    \forward(x) = \left(\prod_{t=0}^{T-1} \transitMatrix(s_{t+1} \mid s_t, a_t)\right) \prod_{s \in \stateSp} \frac{\mathcal{B}\left((\alpha_{a,s} + j(a,s))_{a \in \actionSp}\right)}{\mathcal{B}\left((\alpha_{a,s})_{a \in \actionSp}\right)}.
\end{align}
From this equation, we have
\begin{align}
    &\fullGradientGeneralArg{\alpha_{a,s}} \smoothedObjFunc_{W} \\
    &= \frac{1}{\popSize}\sum_{i=1}^{\popSize} W(f(x_i)) \left( \diGamma(\alpha_{a,s} + j_i(a,s)) - \diGamma(\alpha_{a,s}) + \diGamma(\sum_{a' \in \actionSp} \alpha_{a',s}) - \diGamma\left(\sum_{a' \in \actionSp} \left(\alpha_{a',s} + j_i(a',s)\right)\right) \right),
\end{align}

By using the property of the digamma function $\diGamma(x+1) = \diGamma(x) + \frac{1}{x}$ iteratively, we have~\eqref{eq:full-gradient-estimator-for-reinforcement-learning-with-discrete-state-and-action}.

\subsubsection{Proof of Subsection~\ref{subsec:example-rl-reparameterization}}
\label{subsubsec:example-rl-reparameterization-proof}
We prove Equation~\eqref{eq:reparameterized-gradient-estimator-example}.

We have
\begin{align}
    &\left[ \frac{\partial \log \sampleDist{\theta}(x)}{\partial \theta_{a,s}} \right]_{\theta = \thetaFunc{\psi}{\noise}}\\
    &= \sum_{a \in \actionSp, s \in \stateSp} \left[ \frac{\partial}{\partial \theta_{a,s}} j(a, s) \log \pi_{\theta}(a \mid s) \right]_{\theta = \thetaFunc{\psi}{\noise}}\\
    &= \sum_{a \in \actionSp, s \in \stateSp} \frac{\partial}{\partial \theta_{a,s}} j(a, s) \left[ \theta_{a,s} - \log \sum_{a' \in \actionSp} \exp(\theta_{a',s})\right]_{\theta = \thetaFunc{\psi}{\noise}}\\
    &=  j(a, s) - j(s) \pi_{\thetaFunc{\psi}{\noise}}(a \mid s).
\end{align}

Also,
\begin{align}
    \frac{\partial \thetaFunc{\psi}{\noise}}{\partial \psi} = \frac{\partial}{\partial \bar \theta}\left[\bar \theta + \sigma \noise\right] = \identityMatrix.
\end{align}

By inserting these results into~\eqref{eq:reparameterized-gradient-estimator}, we have~\eqref{eq:reparameterized-gradient-estimator-example}.

\section{Additional Results on OIU with Gaussian Noise}
\label{sec:appendix-bbob}

We provide the convergence curves for all 24 BBOB functions under Gaussian OIU setting (Section~\ref{sec:numerical-experiments}) (Figure~\ref{fig:bbob-appendix-1} and Figure~\ref{fig:bbob-appendix-2}).
Each curve is the mean of 10 different random seeds, and the shaded area shows the standard deviation (std).
We also provide the computation time for 5000 generations for ES and PAES on all 24 BBOB functions (Figure~\ref{fig:computation-time}).

\begin{figure}[htbp]
  \centering
  \begin{minipage}[t]{0.32\textwidth}
    \includegraphics[width=\textwidth]{fig/experiment/bbob/bbob_f01_aggregated}
  \end{minipage}\hfill
  \begin{minipage}[t]{0.32\textwidth}
    \includegraphics[width=\textwidth]{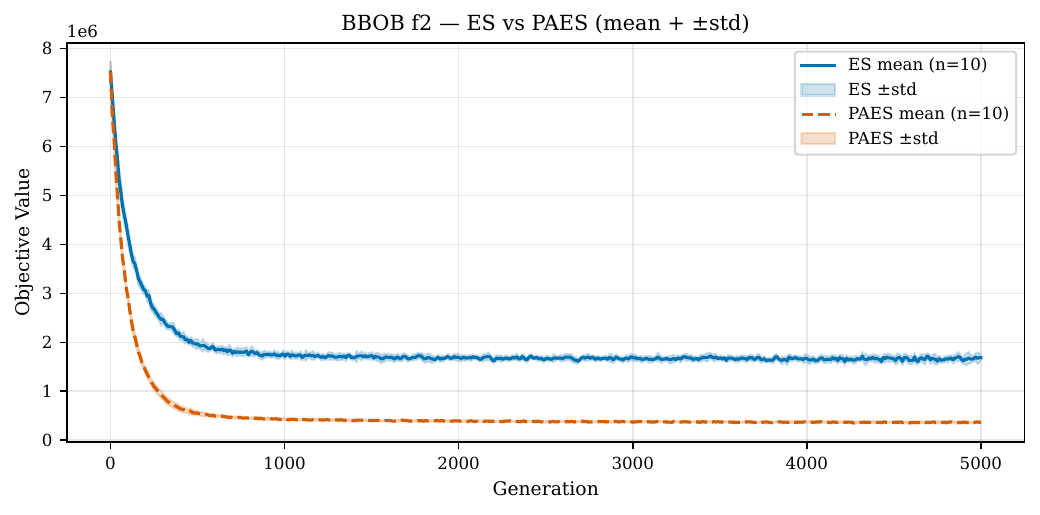}
  \end{minipage}\hfill
  \begin{minipage}[t]{0.32\textwidth}
    \includegraphics[width=\textwidth]{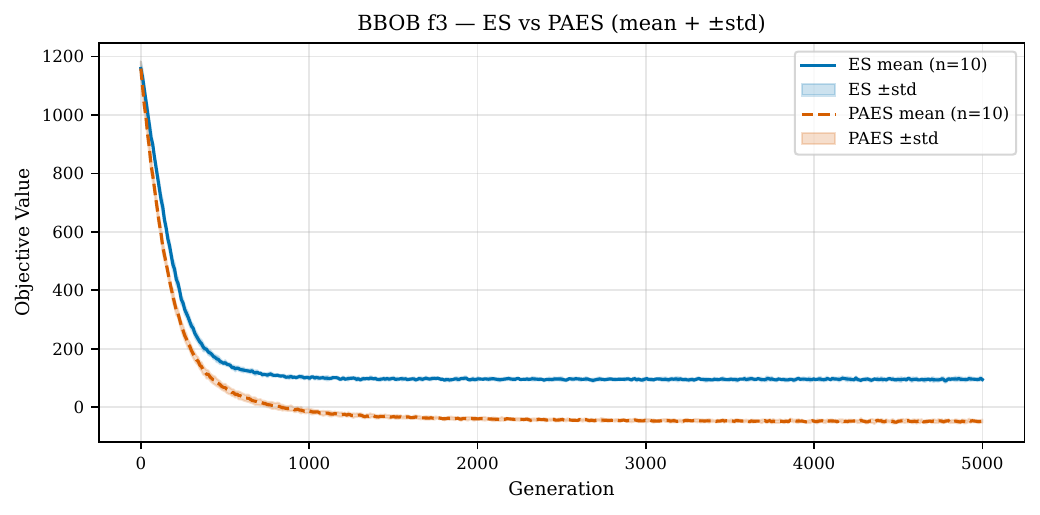}
  \end{minipage}

  \vspace{0.5em}

  \begin{minipage}[t]{0.32\textwidth}
    \includegraphics[width=\textwidth]{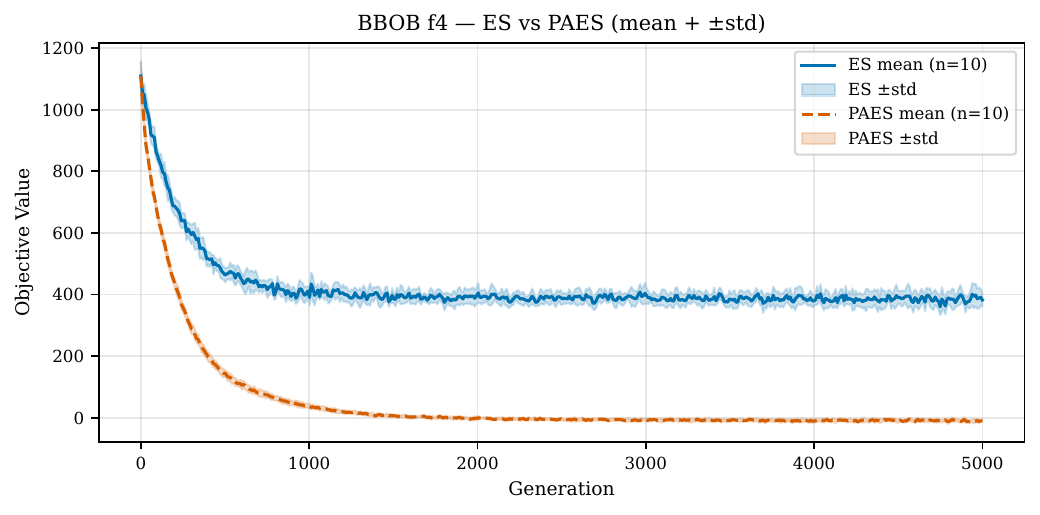}
  \end{minipage}\hfill
  \begin{minipage}[t]{0.32\textwidth}
    \includegraphics[width=\textwidth]{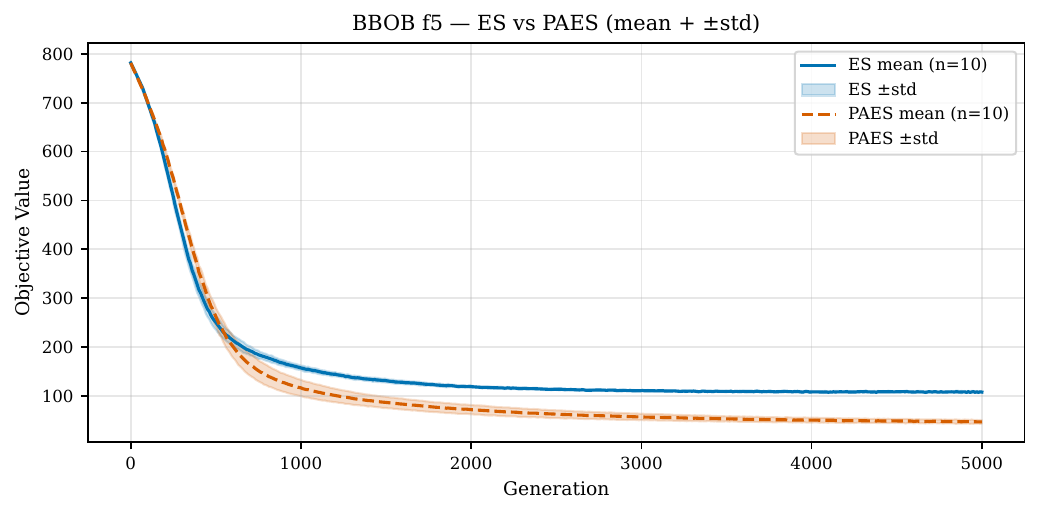}
  \end{minipage}\hfill
  \begin{minipage}[t]{0.32\textwidth}
    \includegraphics[width=\textwidth]{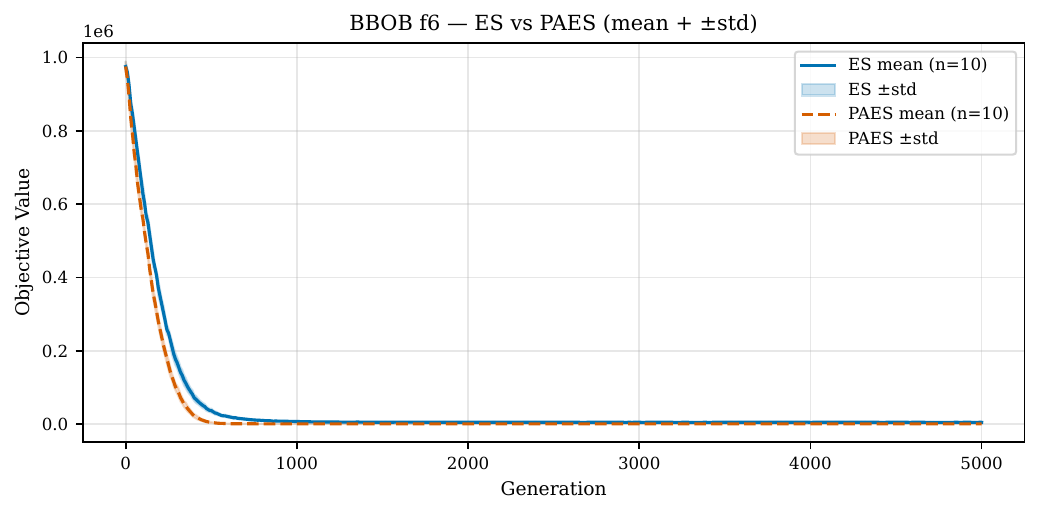}
  \end{minipage}

  \vspace{0.5em}

  \begin{minipage}[t]{0.32\textwidth}
    \includegraphics[width=\textwidth]{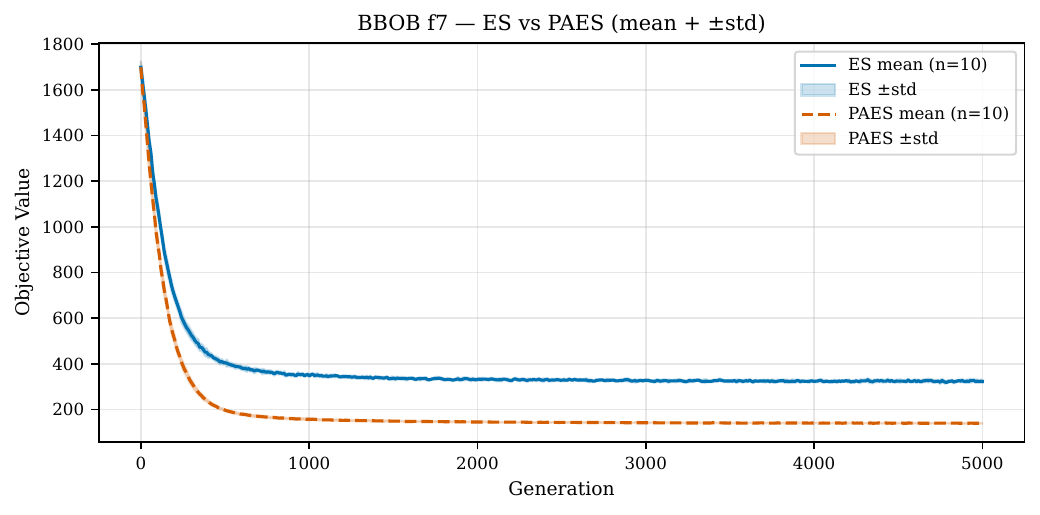}
  \end{minipage}\hfill
  \begin{minipage}[t]{0.32\textwidth}
    \includegraphics[width=\textwidth]{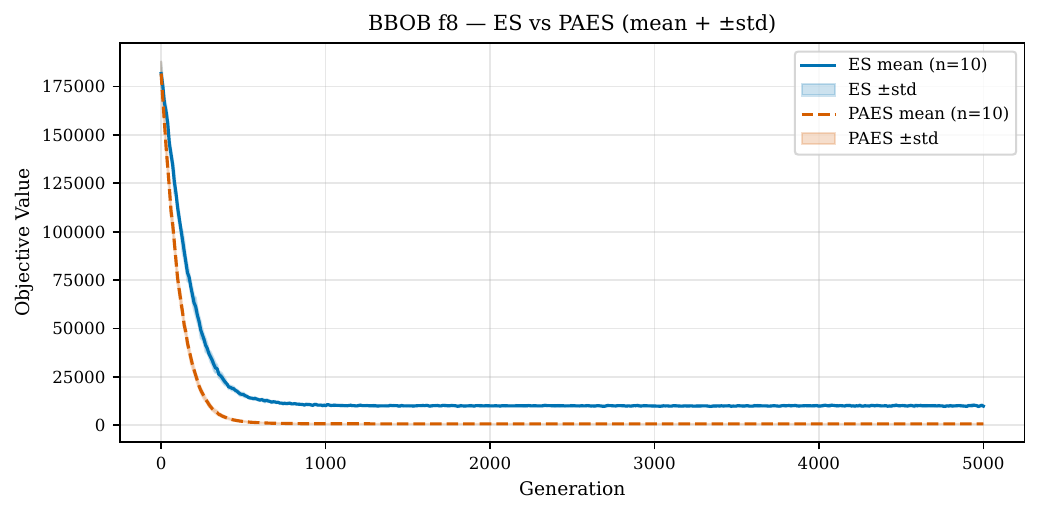}
  \end{minipage}\hfill
  \begin{minipage}[t]{0.32\textwidth}
    \includegraphics[width=\textwidth]{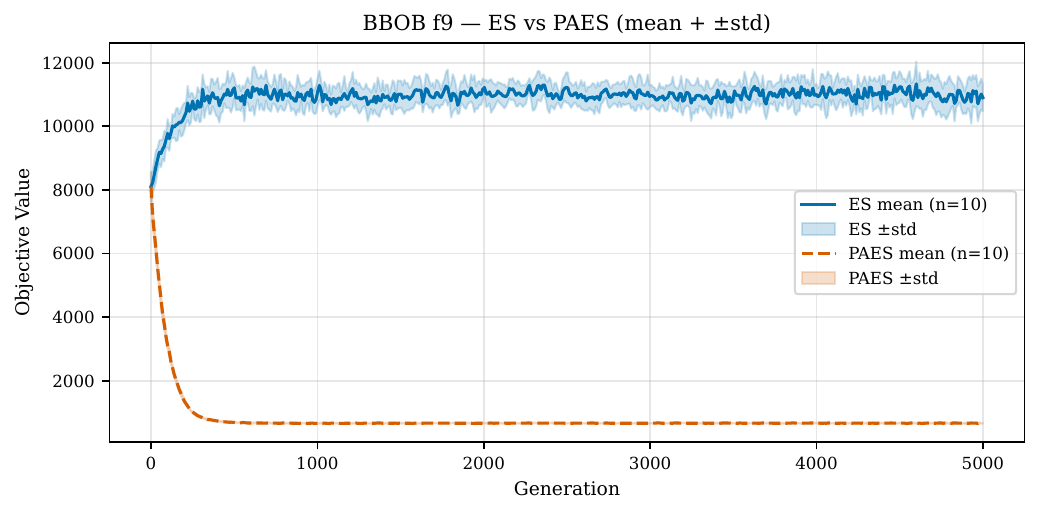}
  \end{minipage}

  \vspace{0.5em}

  \begin{minipage}[t]{0.32\textwidth}
    \includegraphics[width=\textwidth]{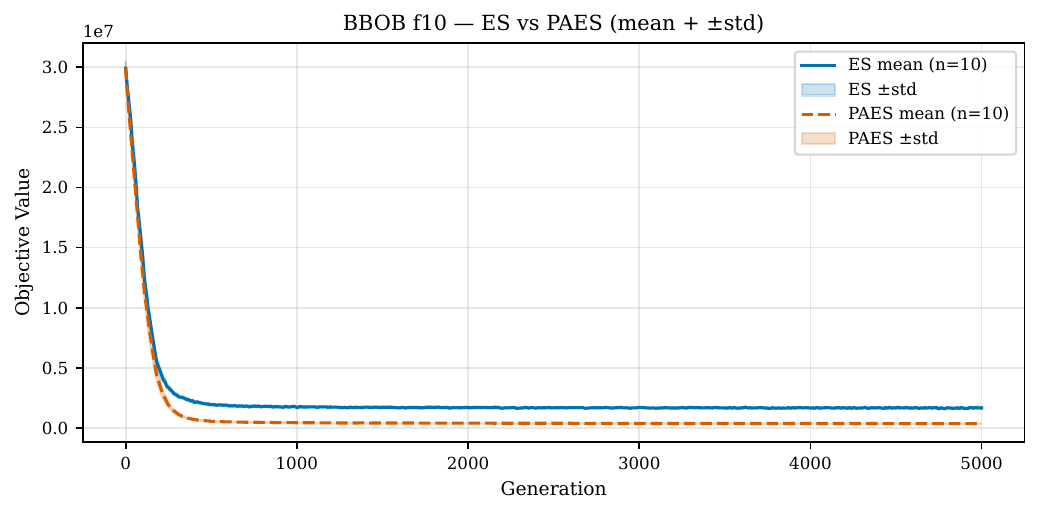}
  \end{minipage}\hfill
  \begin{minipage}[t]{0.32\textwidth}
    \includegraphics[width=\textwidth]{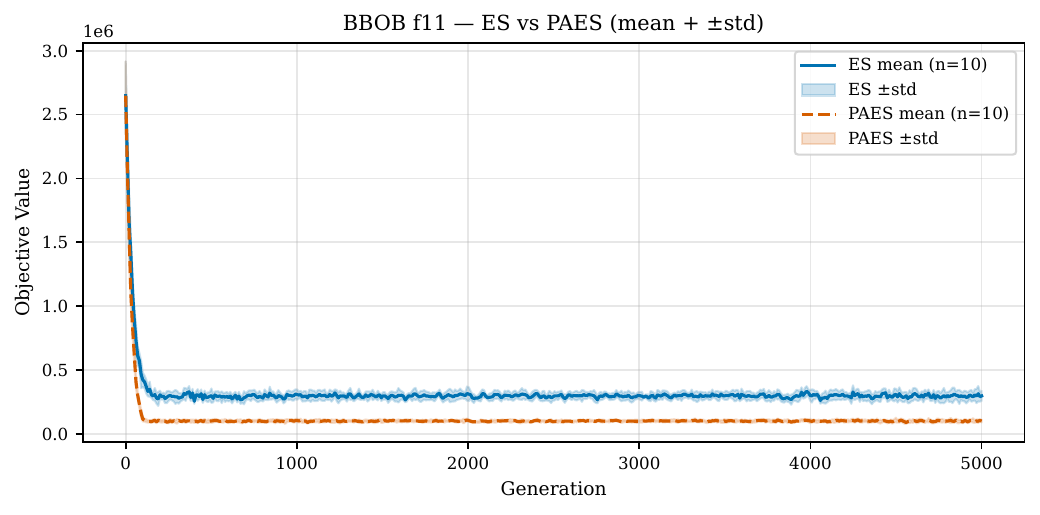}
  \end{minipage}\hfill
  \begin{minipage}[t]{0.32\textwidth}
    \includegraphics[width=\textwidth]{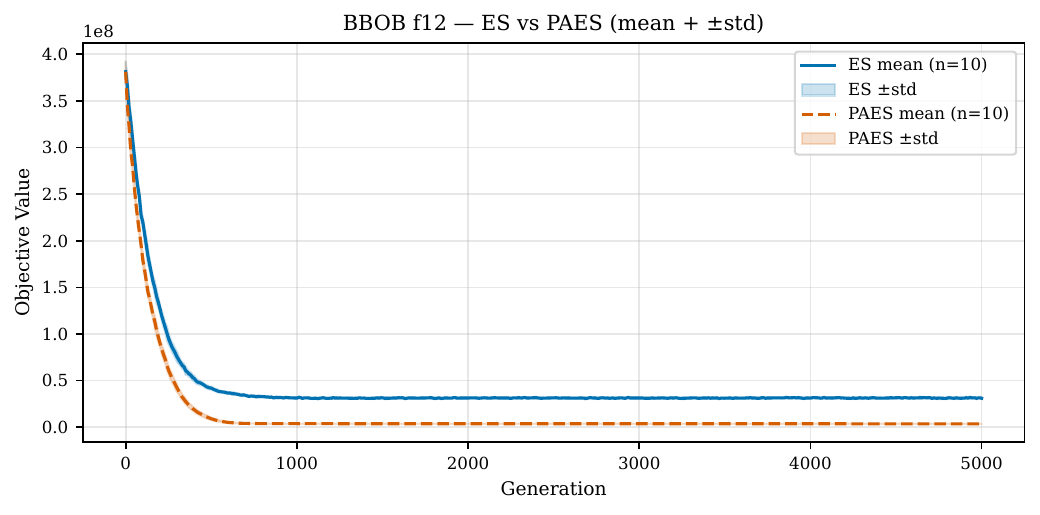}
  \end{minipage}
  \caption{BBOB f1--f12: convergence curves (ES vs.\ PAES, mean $\pm$ std, 10 seeds,
    $d=40$, population size $100$, up to $5000$ generations).}
  \label{fig:bbob-appendix-1}
\end{figure}

\begin{figure}[htbp]
  \centering
  \begin{minipage}[t]{0.32\textwidth}
    \includegraphics[width=\textwidth]{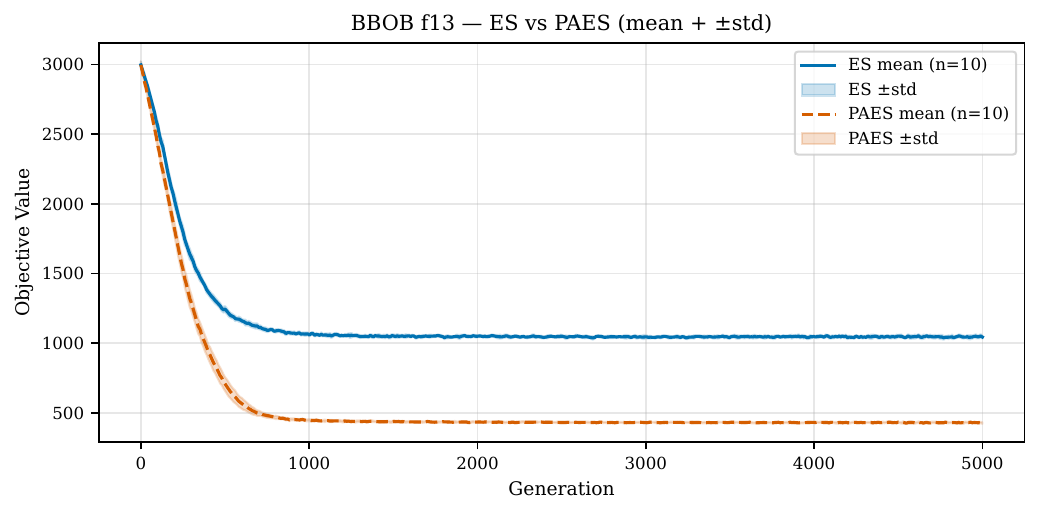}
  \end{minipage}\hfill
  \begin{minipage}[t]{0.32\textwidth}
    \includegraphics[width=\textwidth]{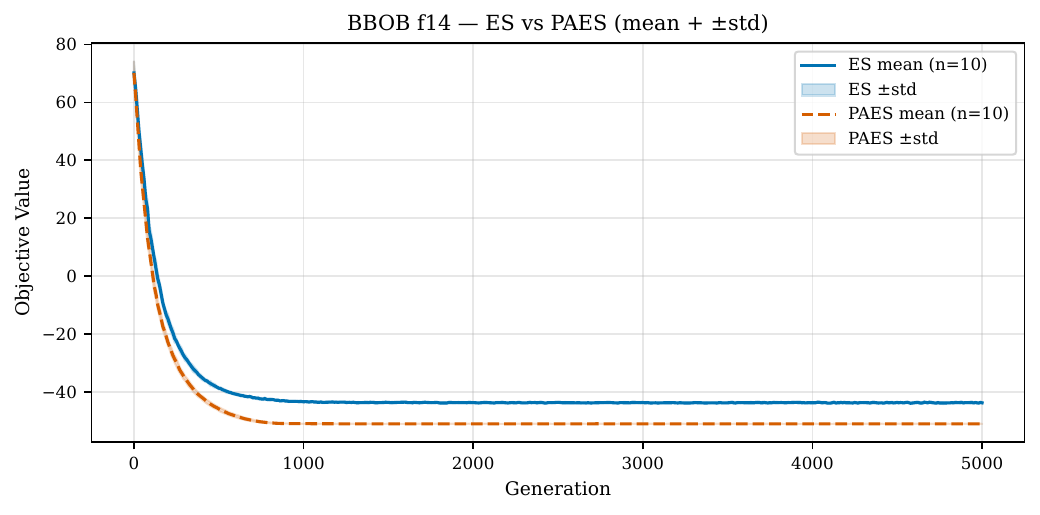}
  \end{minipage}\hfill
  \begin{minipage}[t]{0.32\textwidth}
    \includegraphics[width=\textwidth]{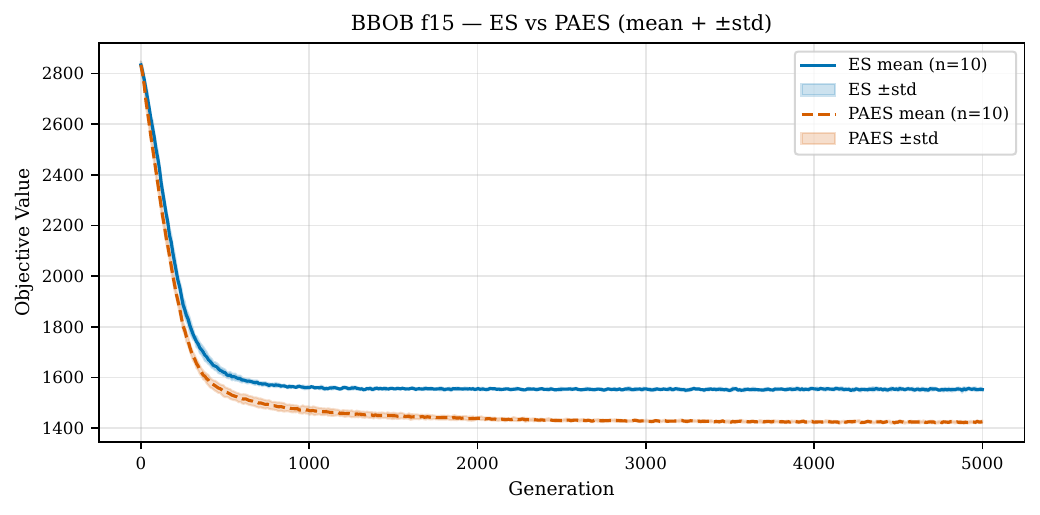}
  \end{minipage}

  \vspace{0.5em}

  \begin{minipage}[t]{0.32\textwidth}
    \includegraphics[width=\textwidth]{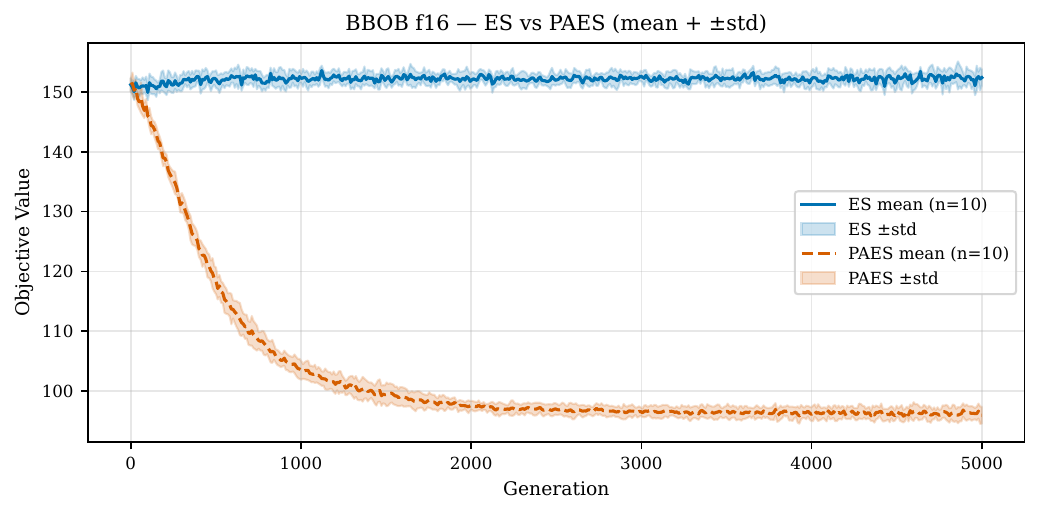}
  \end{minipage}\hfill
  \begin{minipage}[t]{0.32\textwidth}
    \includegraphics[width=\textwidth]{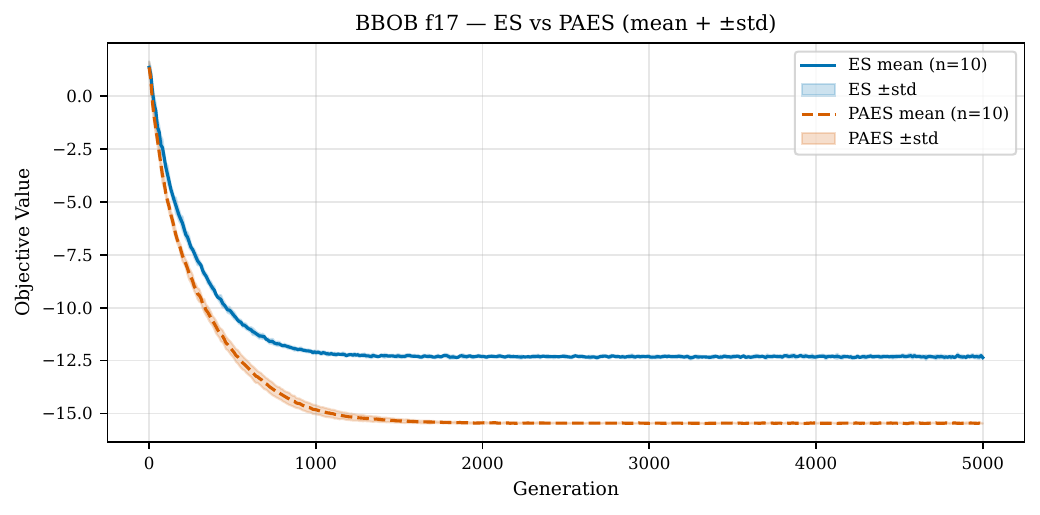}
  \end{minipage}\hfill
  \begin{minipage}[t]{0.32\textwidth}
    \includegraphics[width=\textwidth]{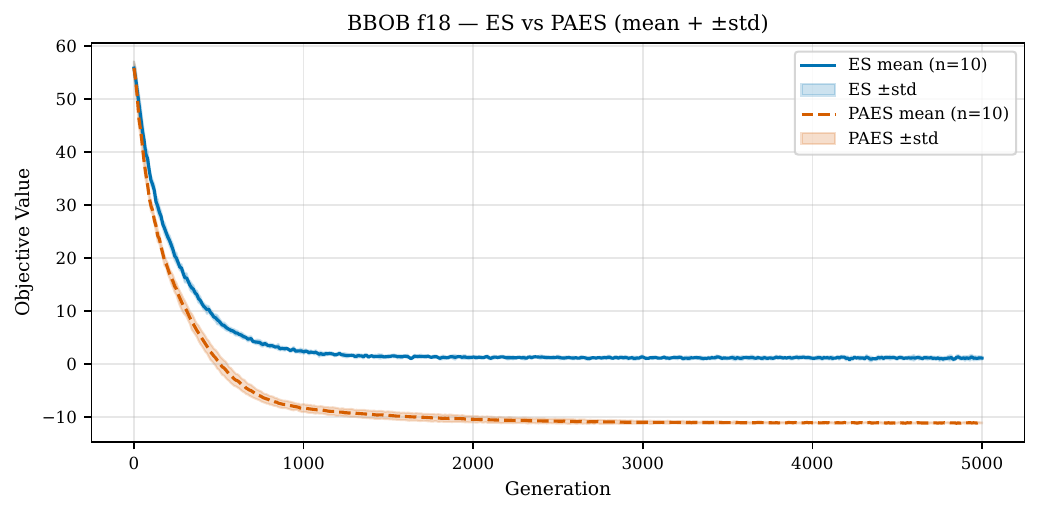}
  \end{minipage}

  \vspace{0.5em}

  \begin{minipage}[t]{0.32\textwidth}
    \includegraphics[width=\textwidth]{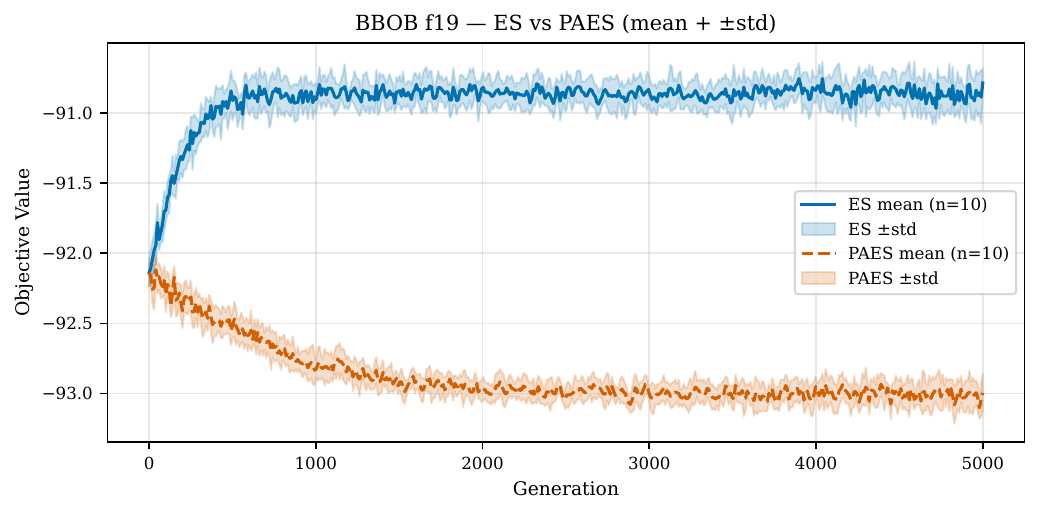}
  \end{minipage}\hfill
  \begin{minipage}[t]{0.32\textwidth}
    \includegraphics[width=\textwidth]{fig/experiment/bbob/bbob_f20_aggregated}
  \end{minipage}\hfill
  \begin{minipage}[t]{0.32\textwidth}
    \includegraphics[width=\textwidth]{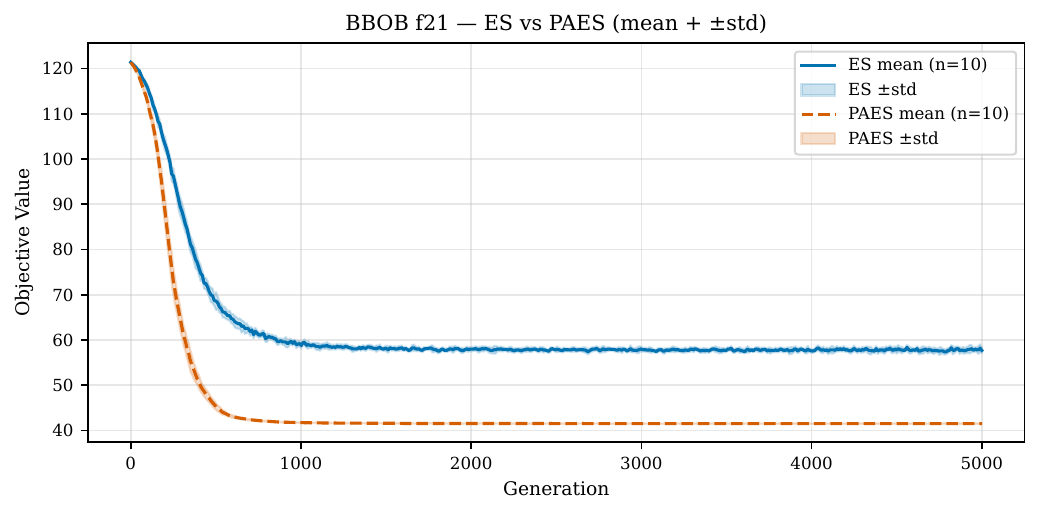}
  \end{minipage}

  \vspace{0.5em}

  \begin{minipage}[t]{0.32\textwidth}
    \includegraphics[width=\textwidth]{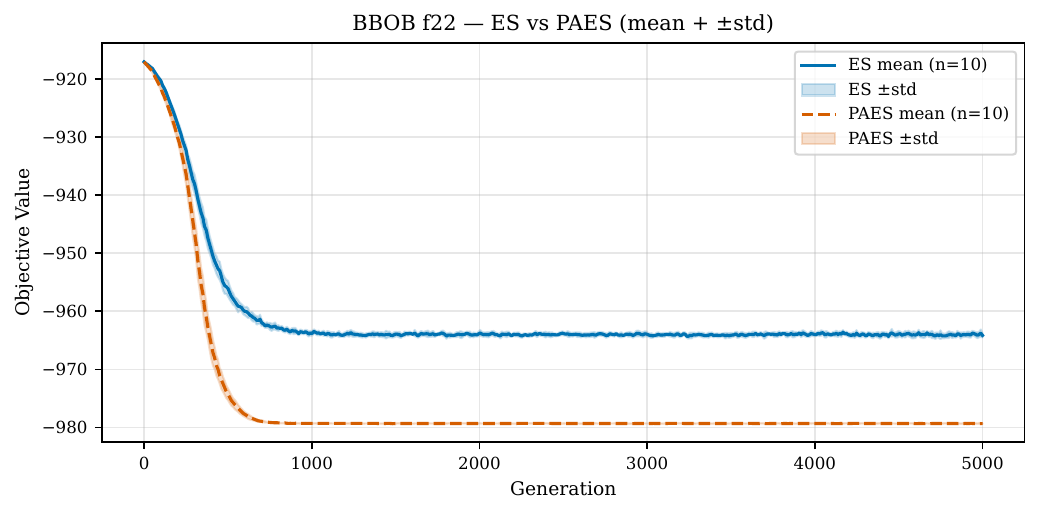}
  \end{minipage}\hfill
  \begin{minipage}[t]{0.32\textwidth}
    \includegraphics[width=\textwidth]{fig/experiment/bbob/bbob_f23_aggregated}
  \end{minipage}\hfill
  \begin{minipage}[t]{0.32\textwidth}
    \includegraphics[width=\textwidth]{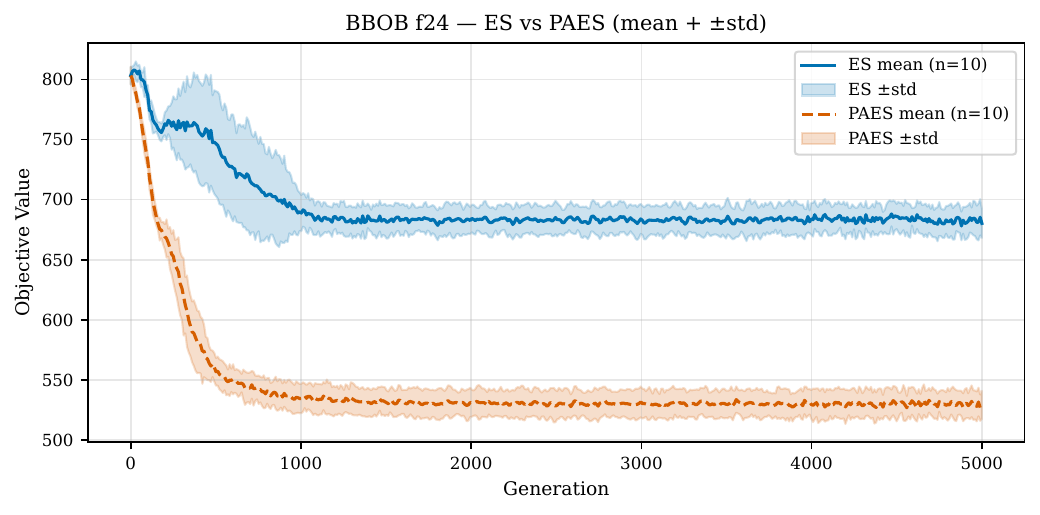}
  \end{minipage}
  \caption{BBOB f13--f24: convergence curves (ES vs.\ PAES, mean $\pm$ std, 10 seeds, dimension
    $d=40$, population size $100$, up to $5000$ generations).}
  \label{fig:bbob-appendix-2}
\end{figure}

\begin{figure}[htbp]
  \centering
  \includegraphics[width=\textwidth]{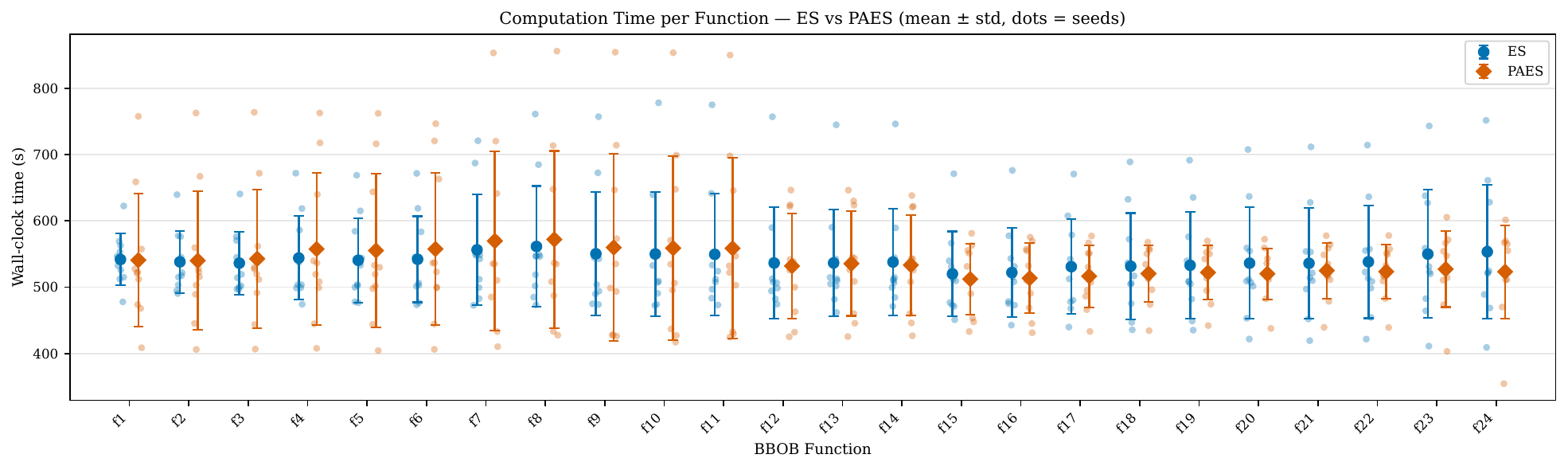}
  \caption{Computation time for 5000 generations for ES and PAES on all 24 BBOB functions (dimension $d=40$, population size $100$). Both methods have similar computation time. We show the mean and standard deviation over 10 different random seeds. Each dot represents the computation time for one run.}
  \label{fig:computation-time}
\end{figure}

\section{Additional Details of Reinforcement Learning Experiments}
\label{sec:appendix-rl-main}

\subsection{Hyperparameter Sensitivity for FrozenLake-v1}
\label{sec:appendix-rl-frozenlake-sweep}
We provide the sensitivity of learning curve of FrozenLake-v1 to the hyperparameters $\sigma$ for ES and $S = \sum_{a \in \actionSp} \alpha_{a,s}$ for PAES.
We observe that the speed of convergence of ES is faster when $\sigma$ is no less than $0.1$ (Figure~\ref{fig:frozenlake-sweep} (a)).
The speed of convergence of PAES is faster when $S$ is smaller than $1$, but the final reward decreases when $S$ is too small (Figure~\ref{fig:frozenlake-sweep} (b)).
Also, the learning curve fluculates after reaching high cumulative reward ($\approx 0.8$) when $S$ is too small ($S < 1$) (Figure~\ref{fig:frozenlake-sweep} (b)).

We note that the Fisher information matrix for the Dirichlet distribution depends on $S$ even though the ratio of $\alpha_{a,s}$ is fixed.
It means that we need to change the learning rate depending on $S$ or use natural gradient to compare the speed of convergence for different $S$ more accurately.
However, changing the learning rate and using natural gradient obscure the effect of $S$ on the learning curve.
In this paper, we  focus on the effect of the hyperparameter $S$ on the learning curve, and we do not change the learning rate dependending on $S$.

\begin{figure}[htbp]
  \centering
  \begin{minipage}[t]{0.48\textwidth}
    \centering
    \includegraphics[width=\textwidth]{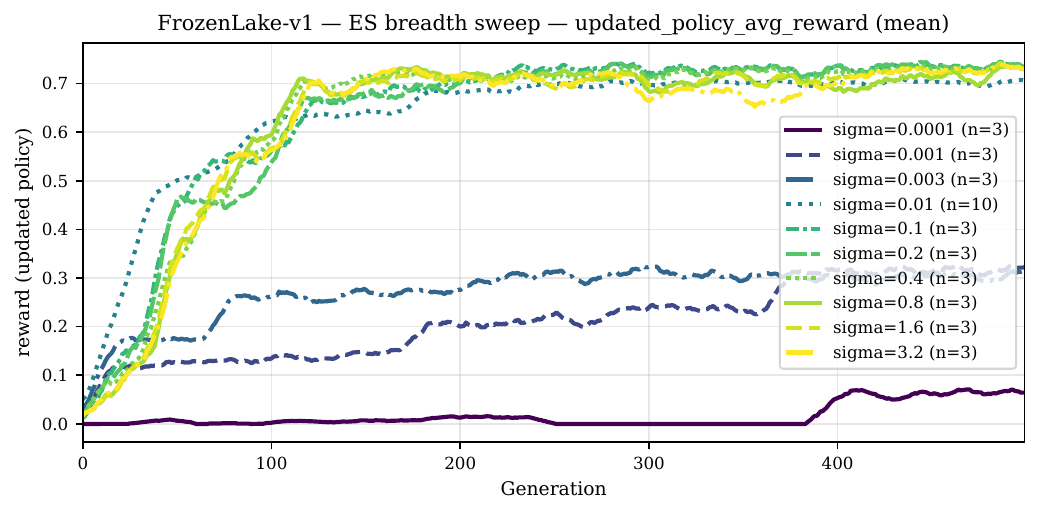}
    \centerline{\small (a) ES. }
  \end{minipage}\hfill
  \begin{minipage}[t]{0.48\textwidth}
    \centering
    \includegraphics[width=\textwidth]{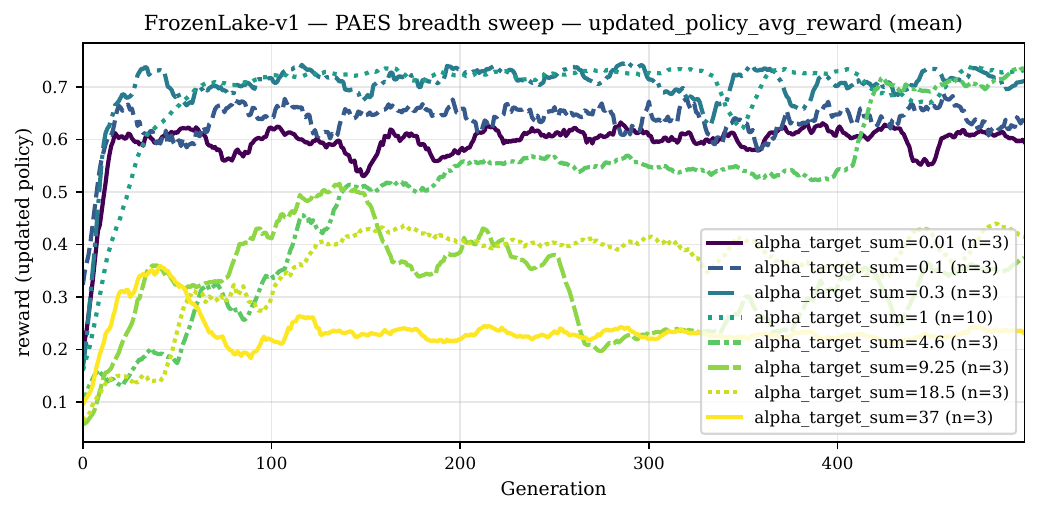}
    \centerline{\small (b) PAES.}
  \end{minipage}
  \caption{(a) The effect of the hyperparameter $\sigma$ on ES. We observe that the performance of ES is better when $\sigma$ is no less than $0.1$.   (b) The effect of the hyperparameter $S = \sum_{a \in \actionSp} \alpha_{a,s}$ on PAES. We observe that, when $S=1$, the converged cumulative reward is the highest and also the learning curve is stable after convergence.}
  \label{fig:frozenlake-sweep}
\end{figure}

\subsection{Computation Time for Reinforcement Learning Experiments}
\label{sec:appendix-rl}

We provide the computation time comparison for the RL experiments in Section~\ref{sec:rl-exact-rao-blackwell} and Section~\ref{sec:rl-reparameterization}.

\begin{figure}[htbp]
  \centering
  \begin{minipage}[t]{0.48\textwidth}
    \centering
    \includegraphics[width=\textwidth]{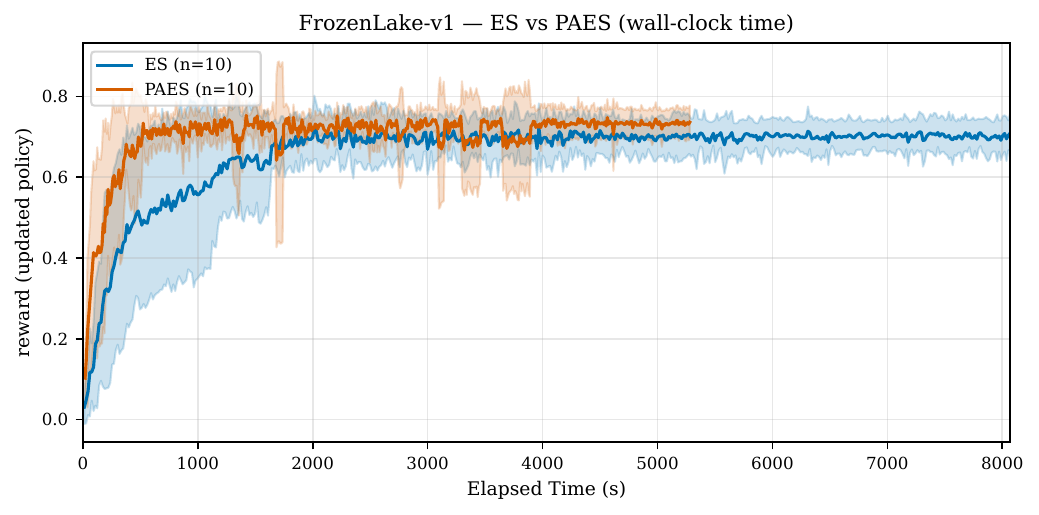}
    \centerline{\small (a) FrozenLake-v1.}
  \end{minipage}\hfill
  \begin{minipage}[t]{0.48\textwidth}
    \centering
    \includegraphics[width=\textwidth]{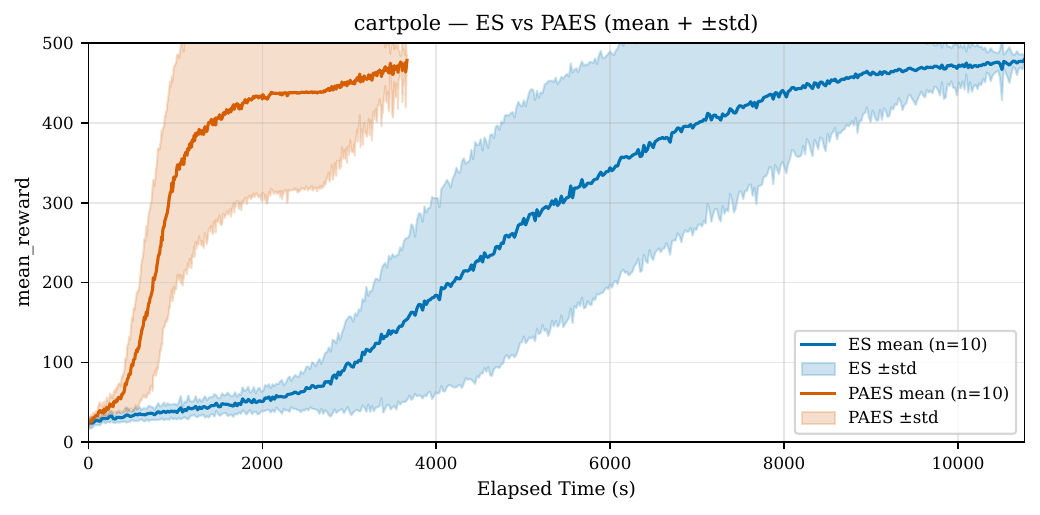}
    \centerline{\small (b) CartPole-v1.}
  \end{minipage}

  \vspace{0.8em}

  \begin{minipage}[t]{0.48\textwidth}
    \centering
    \includegraphics[width=\textwidth]{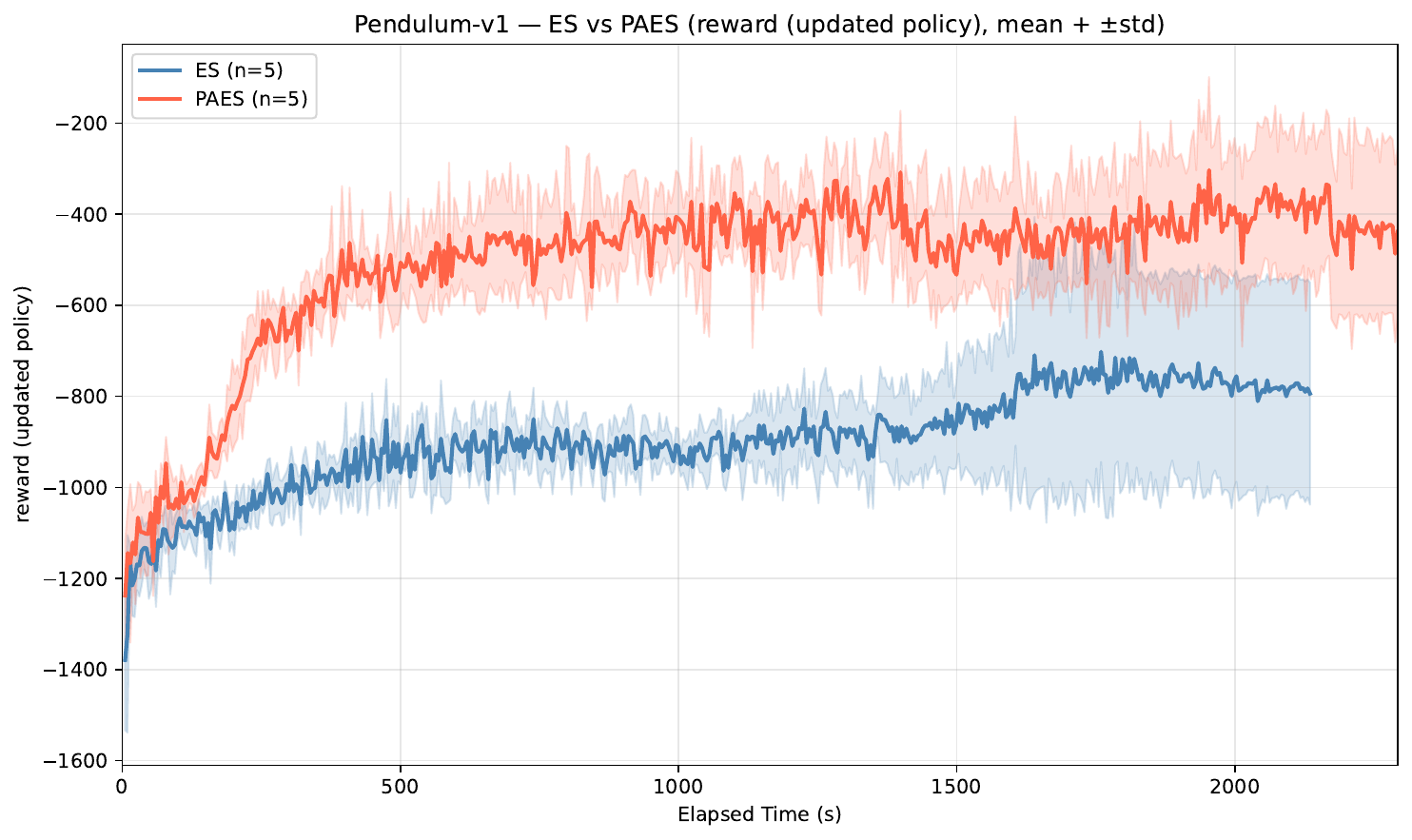}
    \centerline{\small (c) Pendulum-v1.}
  \end{minipage}\hfill
  \begin{minipage}[t]{0.48\textwidth}
    \centering
    \includegraphics[width=\textwidth]{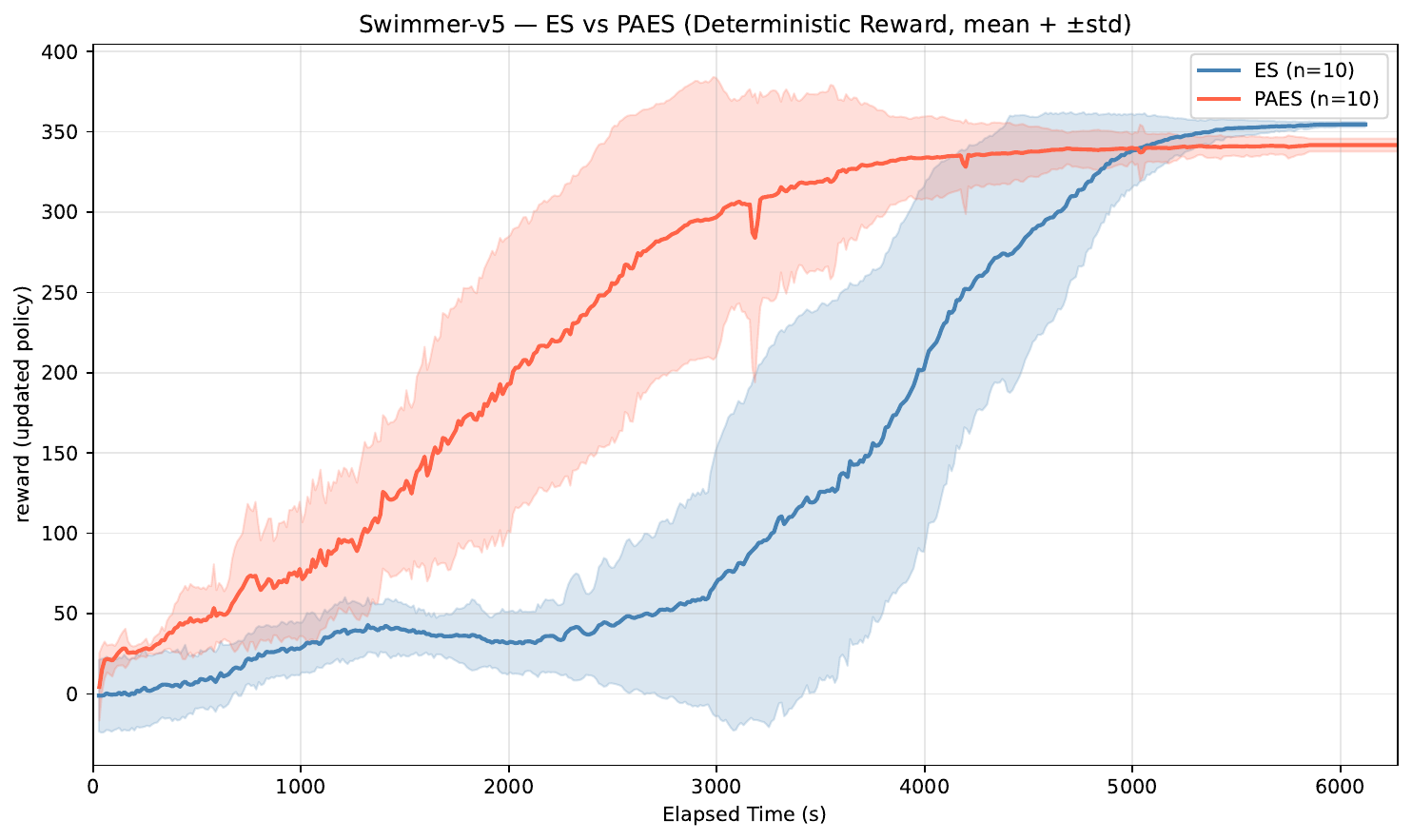}
    \centerline{\small (d) Swimmer-v5.}
  \end{minipage}
  \caption{Reward vs.\ wall-clock time for all continuous control environments.
    PAES (orange) vs.\ ES (blue), mean $\pm$ std over multiple seeds.}
  \label{fig:rl-computation-time}
\end{figure}

\end{document}